\font\Bbb=msbm10
\def\R{\hbox{\Bbb R}}
\newtheorem{theorem}{Theorem}[section]
\newtheorem{lemma}[theorem]{Lemma}
\newtheorem{proposition}[theorem]{Proposition}
\newtheorem{corollary}[theorem]{Corollary}
\newtheorem{definition}[theorem]{Definition}
\newtheorem{remark}[theorem]{Remark}
\newtheorem{example}[theorem]{Example}
\begin{document}

\title{On Gauss codes of virtual doodles}

\author{Andrew Bartholomew \\ 
School of Mathematical Sciences, University of Sussex\\
Falmer, Brighton, BN1 9RH, England\\
e-mail address: andrewb@layer8.co.uk \\ 
\\
Roger Fenn \\ 
School of Mathematical Sciences, University of Sussex\\
Falmer, Brighton, BN1 9RH, England\\
e-mail address: rogerf@sussex.ac.uk \\ 
\\ 
Naoko Kamada \\ 
Graduate School of Natural Sciences, Nagoya City University\\
Nagoya, Aichi 467-8501, Japan\\
e-mail address: kamada@nsc.nagoya-cu.ac.jp \\ 
\\ 
Seiichi Kamada \\ 
Department of Mathematics, Osaka City University\\
Osaka, Osaka 558-8585, Japan\\
e-mail address: skamada@sci.osaka-cu.ac.jp
} 

\maketitle

\begin{abstract} 
We discuss Gauss codes of virtual diagrams and virtual doodles. The notion of a left canonical Gauss code is introduced and it is shown that 
oriented virtual doodles are uniquely presented by left canonical Gauss codes.

\end{abstract}

{\bf Keywords:} {Gauss codes; doodles; virtual doodles; virtual diagrams; left canonical Gauss codes.} 

{\bf Mathematics Subject Classification 2010:} {57M25, 57M27}

\section{Introduction} 

A {\it virtual diagram} is a generically immersed $1$-manifold in $\R^2$ such that 
the crossings are classified into two families, real crossings and virtual crossings. 
A virtual crossing is depicted by a crossing encircled with a small circle. 
Throughout this paper, we only consider a case that the $1$-manifold is a circle and we assume that 
a virtual diagram is oriented, as a $1$-manifold. 

For a virtual diagram $D$ with $n$ $(>0)$  real crossings, removing an open regular neibourhood of the real crossings, we have a collection of $2n$ immersed arcs. We call them {\it semiarcs} of $D$.  If two virtual diagrams $D$ and $D'$ are identical except for a semiarc of $D$ and a semiarc of $D'$, then we say that $D'$ is obtained from $D$ by a {\it detour move}.  

Two virtual diagrams are called {\it strictly equivalent} if they are related by a finite sequence of detour moves and isotopies of $\R^2$.  Two virtual diagrams are strictly equivalent if and only if they are related by a finite sequence of moves depicted in Figures~\ref{figVR_1figVR_2} and  \ref{figVR_3figVR_4} and isotopies of $\R^2$ (cf. \cite{BFKK, K}).  The moves are referred to as $VR_1$, $VR_2$, $VR_3$ and $VR_4$, which are flat versions of virtual Reidemeister moves in virtual knot theory \cite{K}.

    \begin{figure}[h]
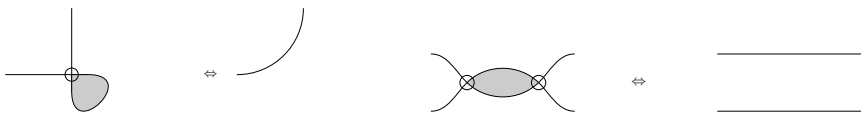

    \centerline{\epsfig{file=doodlefigVR_1.eps, height=1.4cm}
    \qquad \qquad 
    \epsfig{file=doodlefigVR_2.eps, height=0.8cm}
    }
    \vspace*{8pt}
    \caption{Moves $VR_1$ (Left) and  $VR_2$ (Right)}\label{figVR_1figVR_2}
    \end{figure}

    \begin{figure}[h]
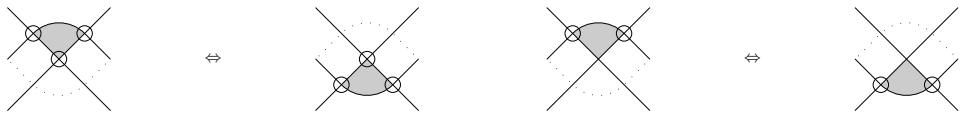

    \centerline{\epsfig{file=doodlefigVR_3.eps, height=1.4cm}
    \qquad \qquad 
    \epsfig{file=doodlefigVFR_3.eps, height=1.4cm}
    }
    \vspace*{8pt}
    \caption{Moves $VR_3$ (Left) and  $VR_4$ (Right)}\label{figVR_3figVR_4}
    \end{figure}

For a virtual diagram $D$, let ${\rm rev}(D)$ or $-D$ denote the virtual diagram obtained from $D$ by reversing the orientation as a $1$-manifold.

We denote by ${\bf Diagram}_{\rm strict}(n)$ the set of virtual diagrams with $n$ real crossings modulo strict equivalence, and by  
 ${\bf Diagram}_{\rm strict+rev}(n)$ the set of virtual diagrams with $n$ real crossings modulo strict equivalence and reversing orientations.

A virtual diagram 
can be described by a sequence  called a {\it Gauss code} (see Section~\ref{sect:Gauss}).  
Such a sequence is not unique. We introduce the notion of a {\it left canonical Gauss code}.  

\begin{theorem} \label{thm:DGL}
There are bijections 
\begin{equation} \label{eqn:DGL}
 {\bf Diagram}_{\rm strict}(n)  \longleftrightarrow {\bf Gauss}^{\rm LC}(n) 
\end{equation}
and 
\begin{equation} \label{eqn:DrGL}
{\bf Diagram}_{\rm strict+rev}(n) \longleftrightarrow {\bf Gauss}^{\rm LC}_{\rm rev}(n),  
\end{equation}
where ${\bf Gauss}^{\rm LC}(n)$ is the set of left canonical Gauss codes with $n$ letters. 
The set ${\bf Gauss}^{\rm LC}_{\rm rev}(n)$ is explained later.  
\end{theorem}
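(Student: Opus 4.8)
The plan is to reduce everything to a careful analysis of the combinatorics of Gauss codes under the moves $VR_1,\dots,VR_4$. First I would recall the construction of a Gauss code from a virtual diagram $D$: traverse the circle according to its orientation, and record, each time a real crossing is met, a letter together with the over/under (or, in the flat/doodle setting, the "first/second visit") information and, if one wishes, the sign of the crossing. Since the underlying manifold is a single circle and there are $n$ real crossings, each of the $n$ letters occurs exactly twice, so a Gauss code is a cyclic word of length $2n$ on $n$ symbols (decorated with the extra data). The ambiguity in this construction is (i) the choice of basepoint on the circle, which produces a cyclic rotation of the word, and (ii) the choice of names for the $n$ letters, which produces a relabelling. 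The key point is that a virtual diagram, up to detour moves (equivalently $VR_1$–$VR_4$ and planar isotopy), is determined precisely by this cyclic word up to rotation and relabelling: the virtual crossings carry no information, since any two "connect-the-dots" realisations of the same Gauss code differ only by detour moves. I would state and prove this realisability/uniqueness fact as the main lemma — realisability is shown by explicitly drawing the $2n$ semiarcs in a disk with the prescribed endpoints on the boundary and then joining them by arcs with only virtual crossings, and uniqueness by showing any two such joinings are detour-equivalent.

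Next I would explain the normalisation that makes the correspondence a genuine bijection rather than a correspondence up to rotation/relabelling: a Gauss code is in \emph{left canonical form} when, among all $2n$ rotations and all relabellings, one selects the lexicographically least representative (the precise recipe having been given in Section~\ref{sect:Gauss}; I would simply invoke it). The map $D \mapsto$ (left canonical Gauss code of $D$) is then well defined on ${\bf Diagram}_{\rm strict}(n)$ by the main lemma, and it lands in ${\bf Gauss}^{\rm LC}(n)$ by construction. Injectivity is immediate: if two diagrams have the same left canonical code, then their Gauss codes agree up to rotation and relabelling, hence by the uniqueness half of the main lemma the diagrams are strictly equivalent. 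Surjectivity follows from the realisability half: every left canonical Gauss code is in particular a Gauss code, so it is realised by some virtual diagram, whose canonical code is the one we started from (canonicalisation is idempotent). This establishes \eqref{eqn:DGL}.

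For \eqref{eqn:DrGL} I would describe ${\bf Gauss}^{\rm LC}_{\rm rev}(n)$ as the quotient of ${\bf Gauss}^{\rm LC}(n)$ obtained by further identifying a code with the canonical form of its \emph{reverse} — the code read in the opposite direction, which corresponds to ${\rm rev}(D)$ — or, equivalently, the set of codes that are left canonical among all rotations, relabellings, \emph{and} reversals. Then exactly the same two-step argument (well-definedness and injectivity from the main lemma applied to $D$ and $-D$, surjectivity from realisability and idempotence of canonicalisation) yields the second bijection. The step I expect to be the genuine obstacle is the uniqueness half of the main lemma: showing that two virtual diagrams with the same Gauss code are related by detour moves. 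The subtlety is that the detour move as defined acts on a single semiarc, so one must argue that an arbitrary change of the virtual "wiring" between two fixed systems of semiarc endpoints can be achieved one semiarc at a time, which requires a small amount of care with how semiarcs cross each other and with ordering the moves so that already-placed semiarcs are not disturbed; everything else is bookkeeping about cyclic words.
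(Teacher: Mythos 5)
Your overall route is the paper's own: your ``main lemma'' is exactly Proposition~\ref{propAA} combined with Lemma~\ref{lemmaAA} and Proposition~\ref{propAB} (a virtual diagram up to strict equivalence corresponds to its Gauss code up to rotation of the basepoint and relabelling of the real crossings, with realisability by connecting the discs $N_1,\dots,N_n$ by arcs having only virtual crossings and uniqueness by rerouting one semiarc at a time by detour moves), and the passage from this correspondence to an honest bijection is the choice of a canonical representative in each orbit, which is what Sections~\ref{section:leftpreferred} and~\ref{sect:leftcanonical} carry out. One caution on your setup: the decoration on the letters is not optional. The paper's letters are the pairs $(j,L)$, $(j,R)$ recording on which side the other branch crosses; ``first/second visit'' adds nothing to the double occurrence word, there is no over/under for doodles, and without the $L/R$ datum the uniqueness half of your main lemma is false --- already for $n=2$ the codes $(1,L)(1,R)(2,L)(2,R)$ and $(1,L)(1,R)(2,R)(2,L)$ have the same underlying word $1122$ but lie in different oriented equivalence classes, hence give non-strictly-equivalent diagrams.

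The genuine mismatch is your description of the canonical form. You take ``the lexicographically least representative among all $2n$ rotations and all relabellings,'' but that is not the paper's left canonical code, and the two really differ. The paper first normalises to \emph{left preferred} codes (relabel so the left letters occur as the subsequence $(1,L)(2,L)\dots(n,L)$ and rotate so the code begins with $(1,L)$), and the left canonical code is the least of the $n$ left preferred representatives, i.e.\ of the ${\rm shift^{LP}}$-orbit of ${\rm proj^{LP}}(w)$. For the diagram whose branches are met in the cyclic order $a_L\, b_R\, c_L\, a_R\, b_L\, c_R$, all three left preferred representatives equal $(1,L)(3,R)(2,L)(1,R)(3,L)(2,R)$, so that is the left canonical code; but the rotation starting at $a_L$ with the labelling $a\mapsto 1$, $b\mapsto 2$, $c\mapsto 3$ yields $(1,L)(2,R)(3,L)(1,R)(2,L)(3,R)$, which is lexicographically smaller and is not even left preferred. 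So the recipe you state produces a bijection of ${\bf Diagram}_{\rm strict}(n)$ onto a \emph{different} set of codes, not onto ${\bf Gauss}^{\rm LC}(n)$ as defined in the paper, and likewise for the reversal quotient ${\bf Gauss}^{\rm LC}_{\rm rev}(n)$. The damage is contained, because your argument only uses that the canonicalisation is constant on oriented equivalence classes, idempotent, and has image the set in question; replacing your recipe by the paper's ${\rm proj^{LC}}$ (least element of the ${\rm shift^{LP}}$-orbit of the left preferred form), as in Theorems~\ref{theorem:leftA} and~\ref{theorem:leftB} and Section~\ref{sect:leftcanonical}, makes your two-step well-definedness/injectivity/surjectivity scheme agree with the paper's proof, including your treatment of reversal via ${\rm rev}$ and the quotient by ${\rm rev^{LC}}$.
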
 

\vspace{0.3cm}

We refer to the moves depicted in Figure~\ref{figFR_1figFR_2} as $R_1$ and $R_2$, which are flat versions of Reidemeister moves of type $1$ and type $2$.

    \begin{figure}[h]
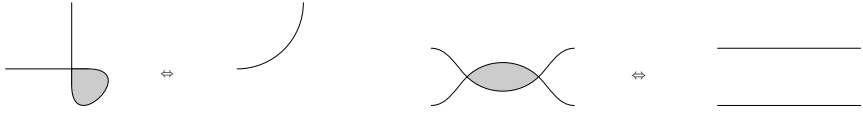

    \centerline{\epsfig{file=doodlefigFR_1.eps, height=1.4cm}
    \qquad \qquad 
    \epsfig{file=doodlefigFR_2.eps, height=0.8cm}
    }
    \vspace*{8pt}
    \caption{Moves $R_1$ (Left) and $R_2$ (Right)}\label{figFR_1figFR_2}
    \end{figure}

Two virtual diagrams are {\it orientedly doodle-equivalent}  if they are related by a finite sequence of 
moves $R_1$, $R_2$, $VR_1$, $VR_2$, $VR_3$, $VR_4$ and isotopies of $\R^2$, or equivalently if they are related by a finite sequence of 
moves $R_1$, $R_2$, detour moves and isotopies of $\R^2$.  
Two virtual diagrams $D$ and $D'$ are {\it unorintedly doodle-equivalent} if $D$ is orientedly equivalent to $D'$ or ${\rm rev}(D')$, i.e., they are related by a finite sequence of $R_1$, $R_2$, $VR_1$, $VR_2$, $VR_3$, $VR_4$ and  
isotopies of $\R^2$ and reversing orientations. 

\begin{definition}[\cite{BFKK}]{\rm 
An {\it oriented} (or {\it unoriented}) {\it virtual doodle} is an oriented (or unoriented) doodle-equivalence class of virtual diagrams. 
}\end{definition}

%We denote by ${\bf Doodle}_{\rm ori}$ (or ${\bf Doodle}_{\rm unori}$) the set of oriented (or unoriented) virtual doodles, i.e., oriented (or unoriented) doodle-equivalence classes of virtual diagrams.   

%\vspace{0.3cm}

When virtual crossings are not allowed, virtual doodles are doodles in the original sense 
defined by the second author and P. Taylor \cite{FT} and by M. Khovanov \cite{MK}. 

It is proved in \cite{BFKK}  that there is a natural bijection between 
the set of oriented (or unoriented) virtual doodles and the set of 
oriented (or unoriented) doodles on surfaces. This fact is analogous to the fact that 
there is a natural bijection between 
the set of oriented (or unoriented) virtual knots and the set of stably equivalence classes of 
oriented (or unoriented) knot diagrams on surfaces (cf. \cite{CKS, KK}).

A virtual diagram is {\it minimal} if we cannot apply any $R_1$ or $R_2$ move even 
after changing diagrams up to strict equivalence.  

\begin{theorem}[\cite{BFKK}]\label{thm0}
An oriented (or unoriented) virtual doodle has a unique minimal representative. 
That is, any oriented (or unoriented) virtual doodle can be represented by a minimal virtual diagram and such a diagram is unique up to strict  equivalence (or up to strict  equivalence and reversing orientations).  
\end{theorem}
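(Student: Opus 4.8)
The plan is to treat the statement as a termination-plus-confluence result about a rewriting system, after translating it into combinatorics on surfaces via the bijection between virtual doodles and doodles on surfaces recalled above. First I would dispose of existence: given a representative $D$ of a virtual doodle that is not minimal, some detour moves followed by an $R_1$ or $R_2$ move strictly decrease the number of real crossings (by $1$ or $2$); since this number is a nonnegative integer the process terminates, so each virtual doodle has at least one minimal representative, and it remains to prove uniqueness up to strict equivalence (and, in the unoriented case, up to strict equivalence and orientation reversal).

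For uniqueness I would introduce the rewriting relation $\Rightarrow$ on $\bigsqcup_{n\ge 0}{\bf Diagram}_{\rm strict}(n)$, setting $[D]\Rightarrow[D']$ whenever $D'$ is obtained from $D$, up to strict equivalence, by one crossing-reducing $R_1$ or $R_2$ move. This relation is terminating, and its normal forms are exactly the minimal virtual diagrams, so by Newman's Lemma the theorem reduces to proving local confluence: if $[D]\Rightarrow[D_1]$ and $[D]\Rightarrow[D_2]$, then $[D_1]$ and $[D_2]$ admit a common $\Rightarrow$-descendant. In the surface picture a representative of $[D]$ is a $4$-valent graph on a surface together with its complementary faces, and each of the two moves is supported on a monogon face or a bigon face. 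When the two faces, together with the strand arcs on their boundaries, are disjoint, the moves commute and confluence is immediate; otherwise I would reduce to a finite list of overlapping \emph{critical configurations} — two monogons or bigons meeting at a crossing, a monogon on the boundary of a bigon, two bigons sharing an edge or a crossing, together with their nested and linked variants — and, for each, exhibit explicit reducing sequences from $D_1$ and $D_2$ to a common diagram, so that the diamond closes. The unoriented statement then follows formally, since enlarging the move set by orientation reversal does not affect termination and reversal commutes with $R_1$, $R_2$ and the detour moves.

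The hard part will be the critical-pair analysis. The awkward cases are those in which carrying out one of the two reductions removes the very bigon or monogon that justified the other; there no single move completes the diamond, and one has to produce a longer reducing sequence whose existence rests on an innermost-disc argument applied to the local picture to rule out obstructions. I would also need to check that real--virtual and virtual--virtual interactions introduce nothing genuinely new: because a semiarc passing through virtual crossings can be rerouted by detour moves, every critical configuration can be isotoped into a small disc on the surface and analysed there, so that the list of diagrams to examine is finite. Showing that this list is exhaustive and that every diamond closes is the technical core of the argument.
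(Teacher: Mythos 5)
You should first note that this paper contains no proof of Theorem~\ref{thm0}: it is quoted from \cite{BFKK}, where it is obtained by passing to doodles on closed surfaces and running a global topological argument analogous to Kuperberg's theorem for virtual knots, not by a diagrammatic rewriting argument. Measured against that, your proposal is a different route, but as written it is not a proof. The existence half is fine, and the reduction of uniqueness to local confluence via Newman's Lemma is legitimate; but note that, given termination, confluence of your relation $\Rightarrow$ is \emph{equivalent} to the uniqueness statement, so the entire content of the theorem sits in the critical-pair analysis, which you explicitly defer (``the technical core''). You do not enumerate the overlapping configurations, do not argue exhaustiveness of any list, and do not close a single nontrivial diamond, so no mathematical work toward uniqueness has actually been carried out.

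Beyond incompleteness, two specific steps are doubtful. First, the claim that every critical configuration can be isotoped into a small disc on the surface is unjustified: the union of the two reduction discs and the arcs meeting them need not be planar --- for instance two bigons sharing both corners form an annulus, and on a higher-genus surface overlapping monogon/bigon pairs can be essential --- so a finite list of pictures supported in a disc is not automatic. Second, for virtual diagrams an $R_1$ or $R_2$ reduction is available only up to detour moves; by Lemma~\ref{lemmaC}, $1$- and $2$-reducibility are conditions on the Gauss code, not on an embedded empty monogon or bigon face, and the two crossings involved in an $R_2$ reduction may be far apart in the diagram. Hence ``disjoint supports imply the moves commute'' must be restated and proved at the level of Gauss codes (or of doodles on surfaces, with the detour ambiguity handled), and the innermost-disc argument you invoke for the awkward overlaps is exactly the point at which the argument could fail and where all the work remains to be done.
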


This is analogous to Kuperberg's theorem on virtual knots \cite{GK}.  

Theorem~\ref{thm0} states that there are bijections 
\begin{equation}
 {\bf Doodle}_{\rm ori} \longleftrightarrow {\bf Diagram}^{\rm min}_{\rm strict}
 \end{equation}
and 
\begin{equation}
  {\bf Doodle}_{\rm unori} \longleftrightarrow {\bf Diagram}^{\rm min}_{\rm strict+rev}, 
  \end{equation}
where 
${\bf Doodle}_{\rm ori}$ (or ${\bf Doodle}_{\rm unori}$) is the set of 
oriented  (or unoriented)  virtual doodles and 
${\bf Diagram}^{\rm min}_{\rm strict}$ (or ${\bf Diagram}^{\rm min}_{\rm strict+rev}$) is 
the set of minimal virtual diagrams modulo strict equivalence (or modulo strict equivalence and reversing  orientations).

The {\it minimum real crossing number} of an oriented (or unoriented) virtual doodle is the minimum number among the numbers of real crossings of all virtual diagrams representing the oriented (or unoriented) virtual doodle.  

Let ${\bf Doodle}_{\rm ori}(n)$ (or ${\bf Doodle}_{\rm unori}(n)$) be the restriction of  
${\bf Doodle}_{\rm ori}$ (or ${\bf Doodle}_{\rm unori}$) to those 
with minimum real crossing number $n$, and let 
${\bf Diagram}^{\rm min}_{\rm strict}(n)$ (or ${\bf Diagram}^{\rm min}_{\rm strict+rev}(n)$) 
be the restriction of 
${\bf Diagram}^{\rm min}_{\rm strict}$ (or ${\bf Diagram}^{\rm min}_{\rm strict+rev}$) 
to those with real crossing number $n$.

Then Theorem~\ref{thm0} implies that there are bijections 
\begin{equation} \label{eqn:DDori}
{\bf Doodle}_{\rm ori}(n) \longleftrightarrow 
{\bf Diagram}^{\rm min}_{\rm strict}(n)
\end{equation}
and 
\begin{equation} \label{eqn:DDunori}
{\bf Doodle}_{\rm unori}(n) \longleftrightarrow
{\bf Diagram}^{\rm min}_{\rm strict+rev}(n).  
\end{equation}

On the other hand, the bijections in (\ref{eqn:DGL}) and (\ref{eqn:DrGL}) implies 

\begin{theorem} \label{thm:DGLmin}
There are bijections 
\begin{equation} \label{eqn:DGori}
{\bf Diagram}^{\rm min}_{\rm strict}(n) 
\longleftrightarrow {\bf Gauss}^{\rm min, LC}(n) 
\end{equation}
and 
\begin{equation} \label{eqn:DGunori}
{\bf Diagram}^{\rm min}_{\rm strict+rev}(n) 
\longleftrightarrow   {\bf Gauss}^{\rm min, LC}_{\rm rev}(n),   
\end{equation}
where ${\bf Gauss}^{\rm min, LC}(n)$ is the set of minimal and left canonical  Gauss codes with $n$ letters. 
The set ${\bf Gauss}^{\rm min, LC}_{\rm rev}(n)$ is explained later.  
\end{theorem}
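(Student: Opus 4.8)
The plan is to deduce Theorem~\ref{thm:DGLmin} from Theorem~\ref{thm:DGL} by restriction, the only real point being to pin down what ``minimal'' should mean for a left canonical Gauss code. Write
\[
\Phi\colon {\bf Diagram}_{\rm strict}(n) \longrightarrow {\bf Gauss}^{\rm LC}(n)
\]
for the bijection of (\ref{eqn:DGL}) and $\Phi_{\rm rev}$ for the bijection of (\ref{eqn:DrGL}). I would simply \emph{define} ${\bf Gauss}^{\rm min, LC}(n)$ to be the image $\Phi\bigl({\bf Diagram}^{\rm min}_{\rm strict}(n)\bigr)$ and ${\bf Gauss}^{\rm min, LC}_{\rm rev}(n)$ to be $\Phi_{\rm rev}\bigl({\bf Diagram}^{\rm min}_{\rm strict+rev}(n)\bigr)$. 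With these definitions the bijections (\ref{eqn:DGori}) and (\ref{eqn:DGunori}) are immediate, since a bijection of sets always restricts to a bijection between a subset of its domain and the corresponding image. So the substantive task is to verify that these two subsets of Gauss codes have an honest combinatorial characterisation intrinsic to Gauss codes; without that the statement would be circular.

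To obtain such a characterisation I would argue as follows. By definition, a virtual diagram $D$ is minimal exactly when no diagram strictly equivalent to $D$ admits a reducing $R_1$ or $R_2$ move (i.e.\ one decreasing the number of real crossings); in particular minimality is a property of the strict-equivalence class of $D$, hence of $\Phi(D)$. Next, translate the two reducing moves into operations on Gauss codes: a reducing $R_1$ deletes a ``curl'', which on the Gauss code is a letter $x$ occurring as an adjacent repeated pair $xx$ in the cyclic word; a reducing $R_2$ deletes a ``bigon'', which on the Gauss code is a pair of letters $x,y$ occurring in the cyclic word in the configuration $\cdots xy\cdots yx\cdots$. The key fact, already built into the proof of Theorem~\ref{thm:DGL}, is that a detour move leaves the underlying cyclic Gauss code unchanged --- it only re-routes a semiarc in the plane --- so all diagrams in the strict-equivalence class of $D$ carry one and the same cyclic word (up to cyclic rotation and relabelling of letters). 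Consequently ``some diagram strictly equivalent to $D$ admits a reducing $R_1$ or $R_2$ move'' is equivalent to ``the cyclic word underlying $\Phi(D)$ contains a removable $xx$-block or a removable $xy\cdots yx$-pattern''. This last condition is visibly intrinsic to Gauss codes, and I would take it as the definition of ${\bf Gauss}^{\rm min, LC}(n)$ (the $G \in {\bf Gauss}^{\rm LC}(n)$ for which it fails), with ${\bf Gauss}^{\rm min, LC}_{\rm rev}(n)$ the analogous subset of ${\bf Gauss}^{\rm LC}_{\rm rev}(n)$; by construction this agrees with the image description above, so (\ref{eqn:DGori}) and (\ref{eqn:DGunori}) follow.

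The step I expect to be the main obstacle is the equivalence just asserted --- that $R_1/R_2$-reducibility of a virtual diagram is detected \emph{exactly} by the curl/bigon patterns in its Gauss code. The direction ``pattern in the code $\Rightarrow$ reducing move on some strictly equivalent diagram'' requires a detour-move construction: given an $xx$-block or an $xy\cdots yx$-pattern in the cyclic word, one must re-route the two relevant semiarcs so that the pattern is realised geometrically as a genuine small curl or bigon, and then delete it. The converse direction ``reducing move on a strictly equivalent diagram $\Rightarrow$ pattern in the code'' uses that strictly equivalent diagrams share their cyclic Gauss code, so the curl or bigon present in $D'$ is already a feature of the common word. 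I would also have to keep the obvious bookkeeping straight: a virtual diagram with $n$ real crossings has a Gauss code with exactly $n$ letters (each occurring twice), so ``minimal with real crossing number $n$'' on the diagram side matches ``minimal left canonical Gauss code with $n$ letters'' on the code side, as constructed in Section~\ref{sect:Gauss}. The unoriented case is handled verbatim with $\Phi_{\rm rev}$ in place of $\Phi$, once one notes that reversing the orientation sends curls to curls and bigons to bigons and so is compatible with the reduction patterns.
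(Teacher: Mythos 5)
Your high-level plan is the same as the paper's: Theorem~\ref{thm:DGLmin} is obtained by restricting the bijections of Theorem~\ref{thm:DGL} to minimal objects, and the substantive input is an intrinsic characterisation of minimality on the level of Gauss codes (the paper's $1$- and $2$-reducibility together with Lemma~\ref{lemmaC}), plus the observations that minimality is a property of the strict-equivalence class and is preserved by ${\rm proj^{LP}}$ and ${\rm proj^{LC}}$. Your remarks that detour moves do not change the code and that the bigon/curl must be realised geometrically by re-routing semiarcs are exactly the content of Lemma~\ref{lemmaC}.

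There is, however, a genuine gap in the one place where your argument has to do real work: the combinatorial patterns you propose do not characterise $R_1/R_2$-reducibility, and since this characterisation is what keeps your definition of ${\bf Gauss}^{\rm min, LC}(n)$ from being circular, the error matters. Recall that the letters of a Gauss code here are elements of $J$, each occurring exactly once, so your patterns must be read at the level of crossing labels $j\in\{1,\dots,n\}$ with the $L/R$ decorations retained. For $R_1$ this is harmless (the two occurrences of $j$ automatically carry $L$ and $R$, giving the paper's $1$-reducibility), but for $R_2$ your pattern ``$\cdots xy\cdots yx\cdots$'' is both too narrow and too broad. Too narrow: a removable bigon can occur with the two strands parallel, which at the label level reads $\cdots jk\cdots jk\cdots$ with decorations $((j,L),(k,R))$ and $((j,R),(k,L))$ --- this is condition (1) of $2$-reducibility in Section~\ref{section:minimal} and is missed by your pattern. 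Too broad: the label pattern $\cdots jk\cdots kj\cdots$ with decorations $((j,L),(k,L))$ at one place (and hence, forcedly, $((k,R),(j,R))$ at the other) admits no flat $R_2$ reduction, since along each strand the other strand crosses to the same side at both crossings, so no detour re-routing produces a cancellable bigon; your criterion would nevertheless declare such a code reducible. Consequently the set you define by these patterns is not the image $\Phi\bigl({\bf Diagram}^{\rm min}_{\rm strict}(n)\bigr)$, and the asserted agreement ``by construction'' fails. The repair is exactly the paper's definitions of $1$- and $2$-reducibility and the verification of Lemma~\ref{lemmaC}; with that substitution the rest of your argument (restriction of the bijections, invariance under strict equivalence and compatibility with ${\rm rev}$) goes through as in the paper.
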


Combining these, we have 

\begin{theorem} \label{thm:DDG}
There are bijections 
\begin{equation} \label{eqn:DDGori}
{\bf Doodle}_{\rm ori}(n) \longleftrightarrow 
{\bf Diagram}^{\rm min}_{\rm strict}(n) 
\longleftrightarrow {\bf Gauss}^{\rm min, LC}(n) 
\end{equation}
and 
\begin{equation} \label{eqn:DDGunori}
{\bf Doodle}_{\rm unori}(n) \longleftrightarrow
{\bf Diagram}^{\rm min}_{\rm strict+rev}(n) 
\longleftrightarrow   {\bf Gauss}^{\rm min, LC}_{\rm rev}(n).  
\end{equation}

\end{theorem}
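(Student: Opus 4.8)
The plan is to obtain Theorem~\ref{thm:DDG} as a formal consequence of Theorem~\ref{thm0} and Theorem~\ref{thm:DGLmin}: the two arrows in \eqref{eqn:DDGori} are exactly the bijections \eqref{eqn:DDori} and \eqref{eqn:DGori}, and the two arrows in \eqref{eqn:DDGunori} are exactly \eqref{eqn:DDunori} and \eqref{eqn:DGunori}, so what remains is to compose them and to verify that the parameter $n$ and the notion of minimality are used consistently across the three columns.

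First I would record the left-hand bijections, which are Theorem~\ref{thm0} in the stratified form \eqref{eqn:DDori} and \eqref{eqn:DDunori}: namely ${\bf Doodle}_{\rm ori}(n) \longleftrightarrow {\bf Diagram}^{\rm min}_{\rm strict}(n)$ and ${\bf Doodle}_{\rm unori}(n) \longleftrightarrow {\bf Diagram}^{\rm min}_{\rm strict+rev}(n)$. The only point needing comment is why ``real crossing number $n$'' on the diagram side matches ``minimum real crossing number $n$'' on the doodle side. If $D$ is minimal and represents a virtual doodle $d$, then any representative $D'$ of $d$ reduces, after strict equivalences and $R_1$, $R_2$ moves, to a minimal diagram; detour moves preserve the number of real crossings while $R_1$ and $R_2$ strictly decrease it, so the resulting minimal diagram has real crossing number at most that of $D'$. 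By the uniqueness in Theorem~\ref{thm0} that diagram is strictly equivalent to $D$, so $D$ attains the minimum real crossing number of $d$. Hence the stratification of ${\bf Diagram}^{\rm min}_{\rm strict}$ by real crossing number corresponds under Theorem~\ref{thm0} to the stratification of ${\bf Doodle}_{\rm ori}$ by minimum real crossing number, and likewise in the unoriented case.

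Next I would invoke Theorem~\ref{thm:DGLmin}, which supplies the right-hand bijections ${\bf Diagram}^{\rm min}_{\rm strict}(n) \longleftrightarrow {\bf Gauss}^{\rm min, LC}(n)$ and ${\bf Diagram}^{\rm min}_{\rm strict+rev}(n) \longleftrightarrow {\bf Gauss}^{\rm min, LC}_{\rm rev}(n)$. Here the index match is automatic: the Gauss code of a virtual diagram with $n$ real crossings has $n$ letters, and ${\bf Gauss}^{\rm min, LC}(n)$ (resp.\ ${\bf Gauss}^{\rm min, LC}_{\rm rev}(n)$) is precisely the image of ${\bf Diagram}^{\rm min}_{\rm strict}(n)$ (resp.\ ${\bf Diagram}^{\rm min}_{\rm strict+rev}(n)$) under the bijection of Theorem~\ref{thm:DGLmin}. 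Composing the bijection \eqref{eqn:DDori} with \eqref{eqn:DGori} then yields \eqref{eqn:DDGori}, and composing \eqref{eqn:DDunori} with \eqref{eqn:DGunori} yields \eqref{eqn:DDGunori}; a composite of bijections is a bijection, which completes the argument. There is no serious obstacle: the whole mathematical content sits in Theorems~\ref{thm0} and \ref{thm:DGLmin}, and the only thing that genuinely requires care is the bookkeeping that confirms ``minimal'' and the parameter $n$ denote the same thing in both inputs, so that the three sets can be chained as displayed.
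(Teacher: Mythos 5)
Your proposal is correct and follows essentially the same route as the paper, which obtains Theorem~\ref{thm:DDG} simply by combining the bijections \eqref{eqn:DDori}, \eqref{eqn:DDunori} coming from Theorem~\ref{thm0} with the bijections \eqref{eqn:DGori}, \eqref{eqn:DGunori} of Theorem~\ref{thm:DGLmin}. Your extra paragraph checking that a minimal representative realizes the minimum real crossing number just makes explicit a bookkeeping step the paper leaves implicit when it passes from Theorem~\ref{thm0} to its stratified form.
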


This paper is organized as follows: 
We introduce Gauss codes for virtual diagrams in Section~\ref{sect:Gauss}, 
left preferred Gauss codes in Section~\ref{section:leftpreferred} and 
left canonical Gauss codes in Section~\ref{sect:leftcanonical}. 
Theorem~\ref{thm:DGL} is proved in Section~\ref{sect:leftcanonical}. 
In Section~\ref{section:minimal} we discuss minimal Gauss codes and 
prove Theorem~\ref{thm:DGLmin}.  Section~\ref{sect:list} is devoted to demonstration of making a list of virtual doodles using Theorem~\ref{thm:DDG}.  
In Section~\ref{codes_orientations} we summarize the results we have seen for virtual diagrams and virtual doodles, and  
then we introduce canonical orientations for unoriented virtual diagrams and unoriented virtual doodles.  
In Section~\ref{section:arrow} arrow diagrams for virtual doodles are discussed.

\section{Gauss codes} \label{sect:Gauss}

A {\it Gauss code} on $n$ letters is a sequence $x_1 x_2 \dots x_{2n}$ 
of length $2n$ such that each $x_i$ is an element of
$$J =\{(1,L), (1,R), (2, L), (2,R), .., (n,L), (n,R)\}$$  
and all elements of $J$ appear in the sequence.

\vspace{0.3cm} 
Let $D$ be a virtual diagram. Let $c$ be a real crossing of $D$ and let $N(c)$ be a regular neighbourhood of $c$. 
The intersection of $D$ and $N(c)$ is a pair of two oriented short arcs intersecting transversely at $c$. 
We call the arcs the {\it branches} at $c$.  

\begin{itemize}
\item{}
As we move through a real crossing along $D$ on a branch $b$, if the other branch of the real crossing passes from the left to the right (or right to the left), then the branch $b$ is called a {\it left} branch (or a {\it right} branch). 
\end{itemize}

Let $D$ be a virtual diagram with $n$ $(>0)$ real crossings. It is decomposed into $2n$ branches and $2n$ semiarcs. 
We denote by $X(D)$, $B(D)$ and $A(D)$ the set of real crossings, the branches and the semiarcs of D, respectively.  

A {\it labeling of real crossings} of $D$ is a bijection $f: X(D) \to \{1, \dots, n\}$ from 
the real crossings to integers $1, \dots, n$.   

For a labeling of real crossings $f: X(D) \to \{1, \dots, n\}$, there is a unique 
bijection $F : B(D) \to J$  such that for each branch $b$, $F(b)$ 
is $(j, L)$ or $(j, R)$ where 
 $j$ is $f$ of the real crossing on the branch $b$ and the second component 
means that the branch is a left $(L)$ or right $(R)$ branch.   
We call the map $F : B(D) \to J$ the  {\it $J$-labeling of branches} of $D$ associated with $f$ and 
the value $F(b)$ the {\it $J$-label} of $b$ associated with $f$.

Take a semiarc $a_0$ of $D$, which we call a {\it base semiarc}.  

Fix a base semiarc $a_0$ of $D$ and a labeling $f$ of real crossings.  
Let $b_1, \dots, b_{2n}$ be branches at real crossings of $D$ appearing in this order as we move along $D$ from $a_0$.  

The {\it Gauss code} of $D$ is defined as a sequence $w= x_1 x_2 \dots x_{2n}$ 
such that for each $i \in \{1, \dots, 2n\}$, $x_i$ is the $J$-label of the $i$-th branch $b_i$ associated with $f$, i.e.,  
it is 
$(j, L)$ or $(j, R)$ where 
$j$ is $f$ of the real crossing on $b_i$, and 
$L$ or $R$ in the second component means that  $b_i$ is a left or right branch.  

It depends on the base semiarc $a_0$ and the labeling $f$ of  real crossings. 
When we do not specify the base semiarc or the labeling of real crossings, it is just called a Gauss code of $D$.    

If two virtual diagrams $D$ and $D'$ equipped a base semiarc and a labeling of real crossings 
are related by detour moves with respect to the base semiarc and the labeling of real crossings, then their
Gauss codes are the same. Thus the Gauss code is also defined for a strict equivalence class 
of virtual diagrams equipped with a base semiarc and a labeling of real crossings. 

\begin{proposition}\label{propAA}
There is a bijection from the set of strict equivalence classes of virtual diagrams with $n$ real crossings 
equipped with a base semiarc and a labeling of real crossings to the set of 
Gauss codes on $n$ letters. 
\end{proposition}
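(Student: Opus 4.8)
The plan is to construct the bijection of Proposition~\ref{propAA} explicitly in both directions and verify they are mutually inverse. First I would describe the forward map: given a strict equivalence class of virtual diagrams with $n$ real crossings together with a base semiarc $a_0$ and a labeling $f$ of real crossings, pick any representative $D$, traverse $D$ starting from $a_0$ following the orientation, record the $J$-label (associated with $f$) of each branch as it is encountered, and output the resulting sequence $w = x_1 x_2 \cdots x_{2n}$. Since there are $2n$ branches and the traversal meets each exactly once, $w$ has length $2n$; since $F : B(D) \to J$ is a bijection, every element of $J$ appears exactly once, so $w$ is a Gauss code on $n$ letters. The paragraph immediately preceding the proposition already records that this output is unchanged under detour moves (and it is manifestly unchanged under isotopies of $\R^2$), so the map is well-defined on strict equivalence classes.

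Next I would construct the inverse map, sending a Gauss code $w = x_1 \cdots x_{2n}$ to a virtual diagram. The idea is the standard realization of a Gauss code: draw $2n$ points on a circle in cyclic order, one for each letter $x_i$ in order; this partitions the circle into $2n$ arcs, which will be the semiarcs, with the arc preceding $x_1$ designated as the base semiarc $a_0$. For each $j \in \{1, \dots, n\}$ there are exactly two positions carrying the letters $(j, L)$ and $(j, R)$; join those two points by a chord and place a real crossing there, arranging the local picture so that, following the orientation induced by our chosen traversal direction, the branch labeled $(j,L)$ is a left branch and the one labeled $(j,R)$ is a right branch (this is possible and determines the crossing locally up to the detour-adjustable part). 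All the chords are drawn in the plane; wherever two chords cross, we declare a virtual crossing. This produces a virtual diagram $D_w$ with $n$ real crossings, a base semiarc, and the labeling of real crossings in which the crossing on the chord for $j$ is sent to $j$. One then checks that different choices in this construction — the order in which chords are drawn, how they are routed, which side of a semiarc a detour runs — differ only by detour moves and isotopy, so $D_w$ is well-defined up to strict equivalence.

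Then I would verify the two compositions are the identity. Starting from a Gauss code $w$, forming $D_w$ and reading off its Gauss code with respect to the built-in base semiarc and labeling returns $w$, essentially by construction: the $i$-th branch encountered in the traversal of $D_w$ is by design the one carrying the label $x_i$. Starting from a strict equivalence class $[D]$ with base semiarc $a_0$ and labeling $f$, with Gauss code $w$, I must show $D_w$ is strictly equivalent to $D$ by an equivalence respecting base semiarc and labeling. The traversal of $D$ meets its branches in the cyclic pattern dictated by $w$; collapsing each semiarc of $D$ to an arc of a circle and each real crossing to a chord exhibits the same combinatorial data as $D_w$, and the difference between the two planar pictures of these chords is absorbed by detour moves (any two embeddings of the branch arcs with the same endpoints and the same real crossings are related by detour moves, since a detour move is precisely replacing one semiarc's embedded arc by another with the same endpoints).

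The main obstacle is the last well-definedness claim: that any two virtual diagrams with the same underlying "chord diagram" data — same cyclic sequence of branch labels, same pairing of branches into real crossings, same left/right type at each crossing — are related by detour moves and isotopy. This is where the characterization (quoted from \cite{BFKK, K}) that strict equivalence is generated by detour moves does the real work: one routes the semiarcs one at a time, using a detour move each time to move a semiarc of the current diagram to match the corresponding semiarc of the target, checking that the virtual crossings so created are immaterial. I would also need a small local lemma that the real crossing for a pair $\{(j,L),(j,R)\}$ is, after fixing the incoming orientations and the left/right designation, determined up to a detour-adjustable neighbourhood — i.e.\ there is essentially one way to make $(j,L)$ a left branch and $(j,R)$ a right branch. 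Everything else — counting, that each element of $J$ appears once, that the map respects the extra data of base semiarc and labeling — is bookkeeping that I would state but not belabor.
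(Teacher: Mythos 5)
Your proposal is correct and takes essentially the same route as the paper: the read-off map is well defined by the detour-invariance remark, surjectivity comes from realizing an arbitrary code by a diagram whose real crossings are built locally with the prescribed left/right branches and whose all remaining intersections are declared virtual, and injectivity follows by normalizing two diagrams near their real crossings so that they differ only in semiarcs, a difference absorbed by detour moves. Your circle-with-chords realization is only a cosmetic variant of the paper's construction, which places the $n$ real crossings in disjoint discs $N_1,\dots,N_n$ and joins them by immersed arcs in the order dictated by the code — a phrasing worth adopting, since a literal chord passes through its crossing point only once and the curve near each chord needs the detour-and-return picture in any case.
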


\begin{proof} 
For a Gauss code on $n$ letters, 
a virtual diagram equipped with a base semiarc and a labeling of real crossings  is constructed as follows: 
Let $w= x_1 x_2 \dots x_{2n}$ be a Gauss code. 
Consider $n$ mutually disjoint discs $N_1, \dots, N_n$ on $\R^2$ and 
consider a pair of oriented, properly embedded arcs in each disc $N_j$, $j=1,\dots, n$, intersecting transversely 
at a point.  We assume that the pair of arcs have $J$-labels $(j, L)$ and $(j, R)$. 
Connect these arcs in  $N_1, \dots, N_n$ by using $2n$ immersed arcs outside of $N_1, \dots, N_n$ 
to obtain a virtual diagram with the Gauss code $w= x_1 x_2 \dots x_{2n}$.  (All crossings of the immersed arcs outside of $N_1, \dots, N_n$ are considered as virtual crossings.)  

Suppose that two diagrams $D$ and $D'$ equipped with a base semiarc and a labeling of real crossings have the same Gauss code.  By an isotopy of $\R^2$ we assume that  they are identical near the real crossings with the same label.  The difference of $D$ and $D'$ are semiarcs, and the connection of branches by the semiarcs of $D$ is the same with that of $D'$.  Thus $D$ and $D'$ are strictly equivalent with respect to the base point and the labeling of real crossings. 
\end{proof}

We have considered the Gauss code for a virtual diagram equipped with a base semiarc and a labeling of real crossings. When we change the base semiarc and the labeling of real crossings, 
the Gauss code is transformed by a simple rule. Let us observe this. 

Let ${\bf Gauss}(n)$ be the set of Gauss codes on $n$ letters. 

For a permutation $\pi$ of $\{1,2, \dots, n\}$, define a map 
\begin{equation}
\pi_\ast : {\bf Gauss}(n) \to {\bf Gauss}(n)
\end{equation}
as follows: 
For a Gauss code  $w$, 
$\pi_\ast(w)$ is the Gauss code such that if the $i$-th element of $w$ is $(j, L)$ (or $(j, R)$)  then  
the $i$-th element is  $(\pi (j), L)$ (or $(\pi (j), R)$) for each $i \in \{1, \dots, 2n\}$.   

For an integer $m$ $(\in \{0,1, \dots, 2n-1\})$, define a map 
\begin{equation}
{\rm shift}[m] : {\bf Gauss}(n) \to {\bf Gauss}(n)
\end{equation}
as follows: 
For a Gauss code  $w=x_1 x_2 \dots x_{2n}$, 
${\rm shift}[m](w)$ is the Gauss code 
$$x_{m+1} x_{m+2} \dots x_{2n} x_1 x_2 \dots x_m. $$  

Define a map 
\begin{equation} 
{\rm rev}: {\bf Gauss}(n) \to {\bf Gauss}(n)
\end{equation} 
as follows: For a Gauss code $w= x_1 x_2 \dots x_{2n}$, let ${\rm rev}(w) = x_{2n} \dots x_2 x_1$. 

The following lemma is obtained directly from the definition and we leave the proof to the reader. 

\begin{lemma}\label{lemmaAA}
Let $D$ be a virtual digram with $n$ real crossings.  
\begin{itemize} 
\item[(1)] 
Fix a base semiarc $a_0$ on $D$. Let $w$ and $w'$ be the Gauss codes of $D$ 
with respect to labelings $f$ and $f'$ of real crossings, respectively.  
Then $w' = (f' \circ f^{-1})_\ast(w)$.  
\item[(2)] 
Fix a labeling of real crossings. 
Let $w$ and $w'$ be the Gauss codes of $D$ 
with respect to base semiarcs $a_0$ and $a_0'$, respectively.  
Let $m$ be the number of branches we meet when we move along $D$ from $a_0$ to $a_0'$.   
Then  
$w' = {\rm shift}[m](w)$.  
\item[(3)]  
Fix a base semiarc $a_0$ on $D$ and a labeling of real crossings. Let 
$w$ and $w'$ be the Gauss codes of $D$ and 
${\rm rev}(D)$. Then  $w' = {\rm rev}(w)$. 
\end{itemize}
\end{lemma}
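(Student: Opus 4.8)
The plan is to handle the three parts independently, each time asking which ingredients of the Gauss code construction change when we alter one piece of data and which remain fixed. The common observation is that the ordered sequence of branches $b_1, \dots, b_{2n}$ met along $D$ depends only on $D$ and on the base semiarc (together with the direction of traversal), while the left/right type of a branch is a purely local geometric feature of the two oriented strands at a crossing, independent of the labeling of real crossings.

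For (1), I would fix $a_0$, so that the branch sequence $b_1, \dots, b_{2n}$ and the signs $\epsilon_i \in \{L, R\}$ are the same whether we use $f$ or $f'$; only the integer recorded at $b_i$ changes, from $f(c_i)$ to $f'(c_i)$, where $c_i \in X(D)$ is the crossing on $b_i$. Setting $\pi = f' \circ f^{-1}$ and $j = f(c_i)$ gives $f'(c_i) = \pi(j)$, so the $i$-th letter passes from $(j, \epsilon_i)$ to $(\pi(j), \epsilon_i)$, which is precisely the effect of $\pi_\ast$.

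For (2), I would fix $f$ so that each branch carries a well-defined $J$-label, and observe that travelling from $a_0'$ instead of $a_0$ meets exactly the same branches in the same cyclic order, but beginning at $b_{m+1}$, where $m$ is the number of branches between $a_0$ and $a_0'$; reading off $J$-labels then yields $x_{m+1} \cdots x_{2n} x_1 \cdots x_m = {\rm shift}[m](w)$. For (3), I would fix $a_0$ and $f$ and note that traversing ${\rm rev}(D)$ reverses the order of the branches to $b_{2n}, \dots, b_1$, so the only thing to verify is that each branch keeps its left/right type.

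The expected main obstacle — really the only step that is not pure bookkeeping — is this last verification: globally reversing the orientation simultaneously reverses the direction of travel along a branch $b_i$ (interchanging "left" and "right" as seen from $b_i$) and reverses the orientation of the transverse branch (interchanging "passes from left to right" with "passes from right to left"), so the two sign changes cancel and $b_i$ remains a left (resp.\ right) branch. Granting this local picture, the $J$-label of every branch is unchanged, and hence $w' = x_{2n} \cdots x_2 x_1 = {\rm rev}(w)$.
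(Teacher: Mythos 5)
Your proposal is correct and follows the intended route: the paper itself offers no written argument (it states the lemma is ``obtained directly from the definition'' and leaves it to the reader), and your verification is exactly that direct check, with the only delicate point---that reversing the orientation flips both the traveller's left/right and the transverse branch's direction, so the two changes cancel and each branch keeps its $L$/$R$ type---handled correctly in part (3).
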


\begin{definition}\label{definition:equivGauss} {\rm 
Two Gauss codes on $n$ letters $w$ and $w'$ are {\it orientedly equivalent} if 
there is a permutation $\pi$ and an integer $m$ such that $w' = {\rm shift}[m] \circ \pi_\ast(w)$.  
Two Gauss codes on $n$ letters $w$ and $w'$ are {\it unorientedly equivalent} if 
$w$ is orientedly equivalent to $w'$ or ${\rm rev}(w')$. 
}\end{definition}

Let ${\bf Gauss}_{\rm ori}(n)$ (or ${\bf Gauss}_{\rm unori}(n)$)
denote the oriented (or unoriented) equivalence classes of Gauss codes on $n$ letters. 
There are consecutive natural projections 
\begin{equation}
{\bf Gauss}(n) \to 
{\bf Gauss}_{\rm ori}(n) \to {\bf Gauss}_{\rm unori}(n).
\end{equation}

\begin{proposition}\label{propAB}
There are bijections 
\begin{equation}
{\bf Diagram}_{\rm strict}(n) \longleftrightarrow {\bf Gauss}_{\rm ori}(n) 
\quad \mbox{and}\quad 
 {\bf Diagram}_{\rm strict+rev}(n) \longleftrightarrow {\bf Gauss}_{\rm unori}(n). 
\end{equation}

\end{proposition}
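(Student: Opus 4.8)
The plan is to deduce the proposition from Proposition~\ref{propAA} by passing to quotients on both sides. Write $\mathcal{D}(n)$ for the source of the bijection in Proposition~\ref{propAA} --- strict equivalence classes of virtual diagrams with $n$ real crossings equipped with a base semiarc and a labeling of real crossings --- and let $\Phi\colon\mathcal{D}(n)\to{\bf Gauss}(n)$ be that bijection. Forgetting the base semiarc and the labeling gives a surjection $p\colon\mathcal{D}(n)\to{\bf Diagram}_{\rm strict}(n)$, and the natural projection gives a surjection $q\colon{\bf Gauss}(n)\to{\bf Gauss}_{\rm ori}(n)$. The first step is to show that $q\circ\Phi$ is constant on the fibres of $p$, so that it descends to a well-defined map $\bar\Phi\colon{\bf Diagram}_{\rm strict}(n)\to{\bf Gauss}_{\rm ori}(n)$ with $\bar\Phi\circ p=q\circ\Phi$; the asserted first bijection will be $\bar\Phi$.

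To prove this descent, I would take $d,d'\in\mathcal{D}(n)$ with $p(d)=p(d')$, so that $d$ and $d'$ are carried by a single virtual diagram $D$ with data $(a_0,f)$ and $(a_0',f')$ respectively. By Lemma~\ref{lemmaAA}(1) and (2) the two Gauss codes are related by $\Phi(d')={\rm shift}[m]\circ\pi_\ast(\Phi(d))$, where $\pi=f'\circ f^{-1}$, $m$ is the number of branches met along $D$ when passing from $a_0$ to $a_0'$, and I use that ${\rm shift}[m]$ and $\pi_\ast$ commute (one permutes positions, the other relabels letters). By Definition~\ref{definition:equivGauss} this means $q(\Phi(d))=q(\Phi(d'))$, which is the desired descent; surjectivity of $\bar\Phi$ is then immediate, since $q\circ\Phi$ is surjective and $p$ is surjective.

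The substantive step is injectivity. Given $[D_1],[D_2]\in{\bf Diagram}_{\rm strict}(n)$ with $\bar\Phi([D_1])=\bar\Phi([D_2])$, I would equip $D_1$ and $D_2$ with data to get $d_1,d_2\in\mathcal{D}(n)$ over $[D_1],[D_2]$; then $\Phi(d_1)$ and $\Phi(d_2)$ are orientedly equivalent, say $\Phi(d_2)={\rm shift}[m]\circ\pi_\ast(\Phi(d_1))$. The idea is to realize this abstract operation by an honest change of data on $D_1$: keep $D_1$ itself, replace its labeling $f_1$ by $\pi\circ f_1$, and replace its base semiarc by the one reached after crossing $m$ branches along $D_1$. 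Such a semiarc exists because, traversing $D_1$, semiarcs and branches alternate in a cyclic sequence of $2n$ semiarcs and $2n$ branches, so every $m\in\{0,\dots,2n-1\}$ is attained. By Lemma~\ref{lemmaAA} the resulting $d_1'\in\mathcal{D}(n)$ satisfies $p(d_1')=[D_1]$ and $\Phi(d_1')=\Phi(d_2)$; since $\Phi$ is injective, $d_1'=d_2$, hence $[D_1]=p(d_1')=p(d_2)=[D_2]$. This shows $\bar\Phi$ is a bijection.

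For the second bijection, note that ${\bf Diagram}_{\rm strict+rev}(n)$ is the set of orbits of ${\bf Diagram}_{\rm strict}(n)$ under the involution induced by orientation reversal, and ${\bf Gauss}_{\rm unori}(n)$ is the set of orbits of ${\bf Gauss}_{\rm ori}(n)$ under the involution induced by ${\rm rev}$. By Lemma~\ref{lemmaAA}(3), if $d\in\mathcal{D}(n)$ lies over $[D]$ and has Gauss code $w$, then ${\rm rev}(D)$ with the same base semiarc and labeling has Gauss code ${\rm rev}(w)$; hence $\bar\Phi$ commutes with these two involutions, and a bijection commuting with involutions induces a bijection of orbit sets, giving the second arrow. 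I expect the only delicate point to be the realization used in the injectivity argument --- matching an abstract oriented equivalence ${\rm shift}[m]\circ\pi_\ast$ with a genuine change of base semiarc and labeling, thereby reducing to the injectivity of $\Phi$ from Proposition~\ref{propAA}; everything else is bookkeeping with $\pi_\ast$, ${\rm shift}[m]$, ${\rm rev}$ and their commutation relations.
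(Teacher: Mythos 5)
Your argument is correct and is essentially the paper's proof: the paper simply states that Proposition~\ref{propAB} follows from Proposition~\ref{propAA} and Lemma~\ref{lemmaAA}, and your proposal fills in exactly that deduction (descent to quotients, realizing ${\rm shift}[m]\circ\pi_\ast$ by an actual change of base semiarc and labeling, and equivariance under ${\rm rev}$ for the unoriented case). No gaps; you have just made explicit the bookkeeping the paper leaves to the reader.
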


\begin{proof} 
This is a consequence of Proposition~\ref{propAA} and Lemma~\ref{lemmaAA}.  
\end{proof}

\section{Left preferred Gauss codes}  \label{section:leftpreferred}

\begin{definition}{\rm  Let  $w = x_1 x_2 \dots x_{2n}$  be a Gauss code.  
 It is {\it weakly left preferred} if  $(1, L) (2, L)\dots (n, L)$ is a subsequence.   
 It is  {\it left preferred} if it is weakly left preferred and $x_1 = (1, L)$.  
 }\end{definition} 
 
 Let ${\bf Gauss}^{\rm LP}(n)$  be the set of left preferred Gauss codes with $n$ letters. 
 
First we consider a map 
 \begin{equation} \label{eqn:projLP}
 {\rm proj^{LP}} : {\bf Gauss}(n) \to {\bf Gauss}^{\rm LP}(n).   
\end{equation}

 Let $w = x_1 x_2 \dots x_{2n}$ be a Gauss code on $n$ letters. 
 Let $(j_1, L)(j_2, L) \dots (j_n, L)$ be the subsequence of $w$ obtained by removing elements whose second component is $R$.  Let $\pi$ be a permutation of $(1,\dots,n)$ which sends $j_1, j_2, \dots, j_n$ to $1,2, \dots, n$, respectively.   
 Then $\pi_\ast(w)$ is weakly left preferred.  
 Let $m$ be an integer in $\{1, \dots, 2n\}$ such that the $m$-th element of $\pi_\ast(w)$ is $(1,L)$. 
 Apply ${\rm shift}[m-1]$ to  $\pi_\ast(w)$, and we have a  left preferred Gauss code.  
 This is the definition of ${\rm proj^{LP}}(w)$.  By definition, $w$ and ${\rm proj^{LP}}(w)$ are orientedly equivalent Gauss codes. 
 
If $w \in {\bf Gauss}^{\rm LP}(n)$ then ${\rm proj^{LP}} (w) =w$.  In particular,  ${\rm proj^{LP}} ({\rm proj^{LP}} (w)) = {\rm proj^{LP}} (w)$ for any 
Gauss code $w$.  

By a direct computation we see that for any Gauss code $w$, 
 \begin{equation}\label{eqn:projrev}
  {\rm proj^{LP}}({\rm rev}(w) )= 
 {\rm proj^{LP}}({\rm rev}( {\rm proj^{LP}}(w))).  
 \end{equation}
%Thus, noting $({\rm proj^{LP}})^2={\rm proj^{LP}}$, we have 
% \begin{equation}
% {\rm proj^{LP}} {\rm rev^{LP}}= {\rm rev^{LP}} {\rm proj^{LP}} =  {\rm rev^{LP}}:   {\bf Gauss}^{\rm {LP}}(n) \to {\bf Gauss}^{\rm {LP}}(n). 
%\end{equation}

\vspace{0.3cm}

Define a map 
 \begin{equation}
 {\rm shift^{LP}} : {\bf Gauss}^{\rm {LP}}(n) \to {\bf Gauss}^{\rm {LP}}(n)  
\end{equation}
by 
\begin{equation}
{\rm shift^{LP}}(w) = {\rm proj^{LP}}({\rm shift}[1](w)). 
 \end{equation}

Let $w = x_1 x_2 \dots x_{2n}$ be a left preferred Gauss code.  Assume $n \geq 2$. 
Let $m$ be an integer with $x_m=(2, L)$ and let $\pi $ be a permutation of $\{1, \dots, n\}$ 
sending $i$ to $i-1$ mod $n$ for each $i \in \{1, \dots, n\}$.  
Then ${\rm shift^{LP}}(w) = {\rm shift}[m-1] \circ \pi_\ast (w)$.

\begin{remark}{\rm 
When $n \geq 2$, 
for each $k=1, \dots, n-1$,  
 \begin{equation}
 ({\rm shift^{LP}})^k : {\bf Gauss}^{\rm {LP}}(n) \to {\bf Gauss}^{\rm {LP}}(n)  
\end{equation}
is computed as follows. Let $w = x_1 x_2 \dots x_{2n}$ be a left preferred Gauss code. 
Let $m$ be an integer with $x_m= (k+1, L)$ and let $\pi$ be a permutation of $\{1, \dots, n\}$ 
sending $i$ to $i-k$ mod $n$ for each $i \in \{1, \dots, n\}$.  
Then $({\rm shift^{LP}})^k(w) = {\rm shift}[m-1] \circ (\pi)_\ast (w)$.  
}\end{remark}

\begin{example}\label{example:shift}{\rm 
Let $w = (1, L) (2, R) (2, L) (3, R) (1,R) (3, L) \in {\bf Gauss}^{\rm {LP}}(3)$. Then 
\begin{equation}
\begin{array}{rcl}
w &=& (1, L) (2, R) (2, L) (3, R) (1,R) (3, L), \\ 
{\rm shift^{LP}}(w) &=&  (1, L) (2, R) (3, R) (2, L) (3, L) (1, R), \\ 
({\rm shift^{LP}})^2(w) &=&   (1, L) (2, L) (3, R) (3, L) (1, R) (2, R).  
\end{array}
\end{equation}
}\end{example}

We define a map 
 \begin{equation}
 {\rm rev^{LP}} : {\bf Gauss}^{\rm {LP}}(n) \to {\bf Gauss}^{\rm {LP}}(n)  
\end{equation}
by 
 \begin{equation}
{\rm rev^{LP}}(w) = {\rm proj^{LP}}({\rm rev}(w)).  
\end{equation}

The maps ${\rm shift^{LP}}$ and ${\rm rev^{LP}}$ are 
bijections of ${\bf Gauss}^{\rm {LP}}(n)$ satisfying that 
\begin{equation}
({\rm shift^{LP}})^n =1 \quad \mbox{and} \quad ({\rm rev^{LP}})^2 = 1, 
\end{equation}
where $1$ is the identity map.

\begin{definition}{\rm 
Two left preferred Gauss codes on $n$ letters are 
{\it orientedly equivalent as left preferred Gauss codes} if 
they are in the same orbit by the group action generated by ${\rm shift^{LP}}$.   
Two left preferred Gauss codes on $n$ letters are  
 {\it unorientedly equivalent as left preferred Gauss codes} if they are 
in the same orbit by the group action generated by ${\rm shift^{LP}}$ and 
${\rm rev^{LP}}$.  
}\end{definition}

\begin{theorem}\label{theorem:leftA}
Let $w$ and $w'$ be Gauss codes on $n$ letters.  
They are orientedly (or unorientedly) equivalent as Gauss codes 
(Definition~\ref{definition:equivGauss}) 
if and only if 
${\rm proj^{LP}}(w)$ and ${\rm proj^{LP}}(w')$ are orientedly 
(or unorientedly) equivalent as  left preferred Gauss codes. 
\end{theorem}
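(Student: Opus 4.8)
The plan is to prove both directions by relating the two equivalence relations through the map $\mathrm{proj^{LP}}$. The key structural fact to establish first is that for any Gauss code $w$ and any permutation $\pi$ and shift $\mathrm{shift}[m]$, the left preferred code $\mathrm{proj^{LP}}(\mathrm{shift}[m]\circ\pi_\ast(w))$ lies in the same $\langle\mathrm{shift^{LP}}\rangle$-orbit as $\mathrm{proj^{LP}}(w)$. Since $\mathrm{proj^{LP}}(w)$ and $w$ are orientedly equivalent as Gauss codes, and composition of shifts and permutations is again of the form $\mathrm{shift}[m']\circ\pi'_\ast$, it suffices to check that $\mathrm{proj^{LP}}(\mathrm{shift}[1](u)) = \mathrm{shift^{LP}}(u)$ for $u\in{\bf Gauss}^{\rm LP}(n)$ — which is the definition of $\mathrm{shift^{LP}}$ — together with $\mathrm{proj^{LP}}(\pi_\ast(u)) = u$ and $\mathrm{proj^{LP}}(\mathrm{shift}[1]\circ\pi_\ast(w)) = \mathrm{proj^{LP}}(\mathrm{shift}[1](\mathrm{proj^{LP}}(w)))$. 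The first of these holds because $\pi_\ast$ does not change the subsequence of $L$-branches up to relabeling, so both sides have the same underlying left preferred code. The second follows because $\mathrm{shift}[1]$ commutes with $\pi_\ast$ and $\mathrm{proj^{LP}}$ absorbs any subsequent permutation. Iterating, an arbitrary $\mathrm{shift}[m]\circ\pi_\ast$ applied inside $\mathrm{proj^{LP}}$ reduces to an $m$-fold application of $\mathrm{shift^{LP}}$ to $\mathrm{proj^{LP}}(w)$.

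For the forward direction: suppose $w$ and $w'$ are orientedly equivalent as Gauss codes, so $w' = \mathrm{shift}[m]\circ\pi_\ast(w)$. Applying $\mathrm{proj^{LP}}$ to both sides and using the reduction above, $\mathrm{proj^{LP}}(w') = (\mathrm{shift^{LP}})^m(\mathrm{proj^{LP}}(w))$, so they are orientedly equivalent as left preferred Gauss codes. Conversely, if $\mathrm{proj^{LP}}(w')$ and $\mathrm{proj^{LP}}(w)$ are in the same $\langle\mathrm{shift^{LP}}\rangle$-orbit, then since $\mathrm{shift^{LP}} = \mathrm{proj^{LP}}\circ\mathrm{shift}[1]$ is realized by an explicit $\mathrm{shift}[m-1]\circ\pi_\ast$ (as recorded in the excerpt just before Example~\ref{example:shift}), the whole orbit relation is witnessed by some $\mathrm{shift}[m']\circ\pi'_\ast$; composing with the oriented equivalences $w\sim\mathrm{proj^{LP}}(w)$ and $w'\sim\mathrm{proj^{LP}}(w')$ gives that $w$ and $w'$ are orientedly equivalent as Gauss codes.

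For the unoriented case, I would additionally use the identity $\mathrm{proj^{LP}}(\mathrm{rev}(w)) = \mathrm{proj^{LP}}(\mathrm{rev}(\mathrm{proj^{LP}}(w)))$ from equation~(\ref{eqn:projrev}), which says $\mathrm{rev^{LP}}$ intertwines $\mathrm{rev}$ with $\mathrm{proj^{LP}}$. Then unoriented equivalence of $w,w'$ (i.e. $w$ orientedly equivalent to $w'$ or to $\mathrm{rev}(w')$) transports under $\mathrm{proj^{LP}}$ exactly to $\mathrm{proj^{LP}}(w)$ being in the same $\langle\mathrm{shift^{LP}},\mathrm{rev^{LP}}\rangle$-orbit as $\mathrm{proj^{LP}}(w')$, by the oriented case applied in each of the two alternatives. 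The converse is symmetric, using that $\mathrm{rev^{LP}}$ is realized by $\mathrm{proj^{LP}}\circ\mathrm{rev}$.

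The main obstacle I anticipate is the bookkeeping in the forward reduction — verifying cleanly that $\mathrm{proj^{LP}}(\mathrm{shift}[m]\circ\pi_\ast(w)) = (\mathrm{shift^{LP}})^m(\mathrm{proj^{LP}}(w))$ for \emph{all} $m$ and $\pi$ simultaneously, rather than just generators. This requires care because $\mathrm{proj^{LP}}$ is not a homomorphism on the full monoid of shifts-and-permutations; one must show that the ``error terms'' (the relabeling permutations introduced at each step) are exactly absorbed by the next $\mathrm{proj^{LP}}$, which is where the idempotency $\mathrm{proj^{LP}}\circ\mathrm{proj^{LP}} = \mathrm{proj^{LP}}$ and the commutation of $\mathrm{shift}[1]$ with $\pi_\ast$ do the work. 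Handling the edge cases $n=0,1$ separately (where $\mathrm{shift^{LP}}$ is trivial) is routine.
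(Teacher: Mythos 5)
Your strategy is the same as the paper's: reduce oriented equivalence to the generators $\pi_\ast$, $\mathrm{shift}[1]$, $\mathrm{rev}$, show that $\mathrm{proj^{LP}}$ carries each of them into the action generated by $\mathrm{shift^{LP}}$ (and $\mathrm{rev^{LP}}$, via (\ref{eqn:projrev})), and get the converse from the facts that $\mathrm{shift^{LP}}$ and $\mathrm{rev^{LP}}$ are themselves realized by explicit compositions of $\pi_\ast$, shifts and $\mathrm{rev}$, and that $w$ is orientedly equivalent to $\mathrm{proj^{LP}}(w)$. However, the exact identity you single out as the crux of the forward direction, $\mathrm{proj^{LP}}(\mathrm{shift}[m]\circ\pi_\ast(w)) = (\mathrm{shift^{LP}})^{m}(\mathrm{proj^{LP}}(w))$, is false: shifting past a letter of the form $(j,R)$ does not change the left preferred projection at all, so the correct exponent is the number of letters with second component $L$ among the first $m$ letters of $w$, not $m$. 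For instance, with $n=3$ and the left preferred code $w=(1,L)(2,L)(1,R)(3,L)(2,R)(3,R)$ one computes $\mathrm{proj^{LP}}(\mathrm{shift}[3](w))=(\mathrm{shift^{LP}})^{2}(w)\neq w=(\mathrm{shift^{LP}})^{3}(w)$. Your auxiliary claim $\mathrm{proj^{LP}}(\mathrm{shift}[1]\circ\pi_\ast(w)) = \mathrm{proj^{LP}}(\mathrm{shift}[1](\mathrm{proj^{LP}}(w)))$ also fails in general (take $\pi=\mathrm{id}$ and $w=(1,R)(1,L)(2,L)(2,R)$: the left side is $(1,L)(2,L)(2,R)(1,R)$, the right side is $(1,L)(1,R)(2,R)(2,L)$), because $\mathrm{proj^{LP}}$ absorbs relabelings but not the cyclic shift it introduces.

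The gap is repairable, and the repair is exactly what the paper does: treat a single $\mathrm{shift}[1]$ by a two-case analysis, namely $\mathrm{proj^{LP}}(\mathrm{shift}[1](w))=\mathrm{proj^{LP}}(w)$ when the first letter of $w$ is some $(j,R)$, and $\mathrm{proj^{LP}}(\mathrm{shift}[1](w))=\mathrm{shift^{LP}}(\mathrm{proj^{LP}}(w))$ when it is some $(j,L)$; since the theorem only needs membership in the same $\langle\mathrm{shift^{LP}}\rangle$-orbit rather than a specific exponent, iterating over the generators finishes the forward direction, and your treatment of the converse and of the unoriented case (via (\ref{eqn:projrev})) then agrees with the paper. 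So replace the asserted equalities by this case analysis (or by the corrected exponent count); as written, the identities you propose to verify cannot be established.
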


\begin{proof} 
First we show the only if part.  

(1) Suppose $w' = \pi_\ast (w)$ for some permutation $\pi$.  
It is obvious that ${\rm proj^{LP}}(w') = {\rm proj^{LP}}(w)$.  

(2) Suppose that $w' =  {\rm shift}[1](w)$.  Let $x_1$ be the first element of $w$. 
If $x_1 =(j,R)$ for some $j$, then 
${\rm proj^{LP}}(w') = {\rm proj^{LP}}(w)$. If $x_1 =(j,L)$ for some $j$, then 
${\rm proj^{LP}}(w') = {\rm shift^{LP}}({\rm proj^{LP}}(w))$.  

(3) Suppose $w' =  {\rm rev}(w)$.  By (\ref{eqn:projrev}), we have 
$${\rm proj^{LP}}(w') = {\rm proj^{LP}}({\rm rev}(w)) = {\rm proj^{LP}}({\rm rev}({\rm proj^{LP}}(w))) =
{\rm rev^{LP}}({\rm proj^{LP}}(w)).$$  

Therefore, we have the only if part. 

We show the if part.   Suppose that ${\rm proj^{LP}}(w)$ and ${\rm proj^{LP}}(w')$ are orientedly 
(or unorientedly) equivalent as  left preferred Gauss codes.  Then 
they are orientedly 
(or unorientedly) equivalent as Gauss codes. 
Note that $w$ and ${\rm proj^{LP}}(w)$ are orientedly equivalent as Gauss codes, and so are 
$w'$ and ${\rm proj^{LP}}(w')$.  Thus $w$ and $w$ are orientedly 
(or unorientedly) equivalent as Gauss codes. 
\end{proof} 

\begin{corollary}
Two left preferred Gauss codes are orientedly (or unorientedly) equivalent as Gauss codes 
if and only if they are orientedly (or unorientedly) equivalent as left preferred Gauss codes.  
\end{corollary}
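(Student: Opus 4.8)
The plan is to deduce the corollary directly from Theorem~\ref{theorem:leftA} using the idempotence of ${\rm proj^{LP}}$. The key observation is that if $w$ is already a left preferred Gauss code, then ${\rm proj^{LP}}(w) = w$; this is exactly the remark made just before equation~(\ref{eqn:projrev}).

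First I would let $w$ and $w'$ be two left preferred Gauss codes on $n$ letters. Suppose they are orientedly (or unorientedly) equivalent as Gauss codes in the sense of Definition~\ref{definition:equivGauss}. By Theorem~\ref{theorem:leftA}, ${\rm proj^{LP}}(w)$ and ${\rm proj^{LP}}(w')$ are orientedly (or unorientedly) equivalent as left preferred Gauss codes. Since $w, w' \in {\bf Gauss}^{\rm LP}(n)$, we have ${\rm proj^{LP}}(w) = w$ and ${\rm proj^{LP}}(w') = w'$, so $w$ and $w'$ are orientedly (or unorientedly) equivalent as left preferred Gauss codes. This gives one direction.

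For the converse, suppose $w$ and $w'$ are orientedly (or unorientedly) equivalent as left preferred Gauss codes, i.e., in the same orbit under the action generated by ${\rm shift^{LP}}$ (and ${\rm rev^{LP}}$). Again using ${\rm proj^{LP}}(w) = w$ and ${\rm proj^{LP}}(w') = w'$, this says ${\rm proj^{LP}}(w)$ and ${\rm proj^{LP}}(w')$ are orientedly (or unorientedly) equivalent as left preferred Gauss codes, so Theorem~\ref{theorem:leftA} applies in the reverse direction to conclude that $w$ and $w'$ are orientedly (or unorientedly) equivalent as Gauss codes.

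I do not expect any serious obstacle here, since the corollary is essentially the restriction of Theorem~\ref{theorem:leftA} to the subset ${\bf Gauss}^{\rm LP}(n)$ of fixed points of ${\rm proj^{LP}}$. The only point requiring a moment's care is making sure that the two notions of equivalence among left preferred Gauss codes referenced in the statement — "equivalent as Gauss codes" (the general Definition~\ref{definition:equivGauss}, applied to elements that happen to be left preferred) versus "equivalent as left preferred Gauss codes" (the orbit condition under ${\rm shift^{LP}}$ and ${\rm rev^{LP}}$) — are exactly the two sides of the biconditional, which is transparent once ${\rm proj^{LP}}$ is seen to fix both arguments.
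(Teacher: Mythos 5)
Your argument is correct and is precisely the intended one: the paper states this corollary immediately after Theorem~\ref{theorem:leftA} with no separate proof, because it is exactly the specialization of that theorem to left preferred codes, using that ${\rm proj^{LP}}$ fixes every element of ${\bf Gauss}^{\rm LP}(n)$. Your write-up makes this implicit step explicit, and both directions are handled correctly.
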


Let ${\bf Gauss}^{\rm {LP}}_{\rm ori}(n)$
(or ${\bf Gauss}^{\rm {LP}}_{\rm unori}(n)$)  be the set of 
oriented (or unoriented) equivalence classes of  left preferred
Gauss codes on $n$ letters.   

\begin{theorem}\label{theorem:leftB} 
The projection map ${\rm proj^{LP}}: 
{\bf Gauss}(n) \to {\bf Gauss}^{\rm {LP}}(n) $
 induces bijections: 
\begin{equation}
{\bf Gauss}_{\rm ori}(n) \longleftrightarrow {\bf Gauss}^{\rm {LP}}_{\rm ori}(n) 
\quad \mbox{and} \quad 
{\bf Gauss}_{\rm unori}(n) \longleftrightarrow {\bf Gauss}^{\rm {LP}}_{\rm unori}(n).  
\end{equation}
\end{theorem}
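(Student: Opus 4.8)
The plan is to deduce Theorem~\ref{theorem:leftB} directly from the machinery already assembled, chiefly Theorem~\ref{theorem:leftA} and the basic properties of ${\rm proj^{LP}}$ recorded just above it. The key observation is that ${\rm proj^{LP}}$ is a retraction of ${\bf Gauss}(n)$ onto ${\bf Gauss}^{\rm LP}(n)$ (since ${\rm proj^{LP}}(w)=w$ for $w$ left preferred) and that $w$ and ${\rm proj^{LP}}(w)$ are always orientedly equivalent as Gauss codes. So first I would verify that ${\rm proj^{LP}}$ descends to a well-defined map on equivalence classes: if $w$ and $w'$ are orientedly (resp.\ unorientedly) equivalent as Gauss codes, then by the only-if direction of Theorem~\ref{theorem:leftA} their images ${\rm proj^{LP}}(w)$ and ${\rm proj^{LP}}(w')$ are orientedly (resp.\ unorientedly) equivalent as left preferred Gauss codes, hence represent the same class in ${\bf Gauss}^{\rm LP}_{\rm ori}(n)$ (resp.\ ${\bf Gauss}^{\rm LP}_{\rm unori}(n)$). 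This gives the induced maps
\[
{\bf Gauss}_{\rm ori}(n) \to {\bf Gauss}^{\rm LP}_{\rm ori}(n), \qquad
{\bf Gauss}_{\rm unori}(n) \to {\bf Gauss}^{\rm LP}_{\rm unori}(n).
\]

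Next I would prove surjectivity, which is essentially immediate: a left preferred Gauss code $w$ is in particular a Gauss code, and its class in ${\bf Gauss}^{\rm LP}_{\rm ori}(n)$ (resp.\ unoriented) is hit by the class of $w$ in ${\bf Gauss}_{\rm ori}(n)$ (resp.\ unoriented), because ${\rm proj^{LP}}(w)=w$. Then for injectivity I would argue as follows: suppose the classes of $w$ and $w'$ in ${\bf Gauss}_{\rm ori}(n)$ map to the same class in ${\bf Gauss}^{\rm LP}_{\rm ori}(n)$, i.e.\ ${\rm proj^{LP}}(w)$ and ${\rm proj^{LP}}(w')$ are orientedly equivalent as left preferred Gauss codes. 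By the if direction of Theorem~\ref{theorem:leftA}, $w$ and $w'$ are then orientedly equivalent as Gauss codes, so they already represent the same class in ${\bf Gauss}_{\rm ori}(n)$. The unoriented case is word-for-word the same, using the unoriented half of Theorem~\ref{theorem:leftA} and the fact (equation~(\ref{eqn:projrev})) that ${\rm proj^{LP}}$ intertwines ${\rm rev}$ with ${\rm rev^{LP}}$ up to the equivalence, which is exactly what makes the unoriented version of Theorem~\ref{theorem:leftA} go through.

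I do not expect a genuine obstacle here: the theorem is a formal consequence of Theorem~\ref{theorem:leftA} together with the idempotence of ${\rm proj^{LP}}$ and the fact that it stays within an oriented-equivalence class. The only point requiring a little care is bookkeeping: making sure that "orientedly equivalent as left preferred Gauss codes" (defined via the ${\rm shift^{LP}}$-action) really is the restriction of "orientedly equivalent as Gauss codes" to ${\bf Gauss}^{\rm LP}(n)$ — but this is precisely the content of the Corollary following Theorem~\ref{theorem:leftA}, so I would simply cite it. Thus the proof is a short diagram chase, and the write-up can be kept to a paragraph invoking Theorem~\ref{theorem:leftA} for both well-definedness (only-if part) and injectivity (if part), and the retraction property of ${\rm proj^{LP}}$ for surjectivity.
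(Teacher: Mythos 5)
Your proposal is correct and follows exactly the route the paper takes: the paper's own proof is just the citation of Theorem~\ref{theorem:leftA}, and your write-up merely makes explicit the well-definedness (only-if direction), surjectivity (via ${\rm proj^{LP}}(w)=w$ for left preferred $w$), and injectivity (if direction) that the paper leaves implicit.
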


\begin{proof}
It follows from Theorem~\ref{theorem:leftA}. 
\end{proof}

\section{Left canonical Gauss codes}\label{sect:leftcanonical}

We fix an order on $J$ with 
\begin{equation}
 (1, L) < (1, R) < (2, L) < (2, R) < \dots < (n, L) < (n, R),  
 \end{equation}
and assume that ${\bf Gauss}(n)$ and its subset ${\bf Gauss}^{\rm {LP}}(n)$ are ordered sets with 
a lexicographical order using the order of $J$.  

For a left preferred Gauss code $w$ with $n$ letters,  
the oriented equivalence class  of $w$ as 
left preferred Gauss codes, 
 $[w]^{\rm LP}_{\rm ori} \in {\bf Gauss}^{\rm {LP}}_{\rm ori}(n)$, 
 is given by 
\begin{equation}
[w]^{\rm LP}_{\rm ori} = \{
w,~ {\rm shift^{LP}}(w),~ ({\rm shift^{LP}})^2(w),~  \dots,~ ({\rm shift^{LP}})^{n-1}(w)
\}.
\end{equation}

\begin{definition}{\rm 
A {\it left canonical Gauss code} is a left preferred Gauss code $w$ such that 
it is the smallest element in $[w]^{\rm LP}_{\rm ori}$,  
the oriented equivalence class as left preferred Gauss codes.   
The {\it left canonical representative} of an oriented equivalence class 
of left preferred Gauss codes is the smallest representative.  
}\end{definition}

For example, 
let $w = (1, L) (2, R) (2, L) (3, R) (1,R) (3, L)$ (as in Example~\ref{example:shift}). 
Then 
\begin{equation}
\begin{array}{rcl}
w &=& (1, L) (2, R) (2, L) (3, R) (1,R) (3, L), \\ 
{\rm shift^{LP}}(w) &=&  (1, L) (2, R) (3, R) (2, L) (3, L) (1, R), \\ 
({\rm shift^{LP}})^2(w) &=&   (1, L) (2, L) (3, R) (3, L) (1, R) (2, R).  
\end{array}
\end{equation}
The smallest element is $(1, L) (2, L) (3, R) (3, L) (1, R) (2, R)$. 
It is the left canonical representative of $[w]^{\rm LP}_{\rm ori}$ and   
it is a left canonical Gauss code. The other two Gauss codes $w$ and ${\rm shift^{LP}}(w)$ are not left canonical Gauss codes.  

Let ${\bf Gauss}^{\rm LC}(n)$ be the set of left canonical Gauss codes of $n$ letters. 
Sending oriented equivalence classes  
of left preferred Gauss codes to their left canonical representatives, we have 
a bijection 
\begin{equation}\label{eqn:GaussoriLPLC}
{\bf Gauss}^{\rm {LP}}_{\rm ori}(n) \longleftrightarrow {\bf Gauss}^{\rm LC}(n). 
\end{equation}

We define a map 
\begin{equation}
{\rm proj^{LC}} : {\bf Gauss}(n) \to {\bf Gauss}^{\rm LC}(n)
\end{equation}
by sending a Gauss code $w$ to the left canonical representative of 
the oriented equivalence class $[{\rm proj^{LP}}(w)]^{\rm LP}_{\rm ori}$ 
of ${\rm proj^{LP}}(w)$.  

For example, let $w = (1, L) (2, R) (2, L) (3, R) (1,R) (3, L)$ as above. Then 
${\rm proj^{LC}}(w)= (1, L) (2, L) (3, R) (3, L) (1, R) (2, R)$.  

If $w \in {\bf Gauss}^{\rm LC}(n)$ then ${\rm proj^{LC}}(w) =w$.  
In particular, for any Gauss code $w$, ${\rm proj^{LC}}({\rm proj^{LC}}(w)) ={\rm proj^{LC}}(w)$. 

By Proposition~\ref{propAB}, Theorem~\ref{theorem:leftB} and the bijection in (\ref{eqn:GaussoriLPLC}), we 
obtain a bijection 
\begin{equation}
 {\bf Diagram}_{\rm strict}(n)  \longleftrightarrow {\bf Gauss}^{\rm LC}(n),  
\end{equation}
which is claimed in Theorem~\ref{thm:DGL}

We define a map 
\begin{equation}
{\rm rev^{LC}} : {\bf Gauss}^{\rm LC}(n) \to {\bf Gauss}^{\rm LC}(n)
\end{equation}
by 
\begin{equation}
{\rm rev^{LC}} (w) =  {\rm proj^{LC}}( {\rm rev}(w) ). 
\end{equation}

Let ${\bf Gauss}^{\rm LC}_{\rm rev}(n)$ be the quotient of 
${\bf Gauss}^{\rm LC}(n)$ by the action of ${\rm rev^{LC}}$, i.e., the elements of 
${\bf Gauss}^{\rm LC}_{\rm rev}(n)$ are $\{w, {\rm rev^{LC}}(w) \}$ for  $w \in {\bf Gauss}^{\rm LC}(n)$.

Let $w$ be a left preferred Gauss code with $n$ letters. 
The unoriented equivalence class  of $w$ as 
left preferred Gauss codes, 
 $[w]^{\rm LP}_{\rm unori} \in {\bf Gauss}^{\rm {LP}}_{\rm unori}(n)$, 
 is given by 
\begin{equation}
\begin{array}{ccl}
[w]^{\rm LP}_{\rm unori}   
  & = & [w]^{\rm LP}_{\rm ori} \cup [w']^{\rm LP}_{\rm ori}  \\
  & = &  
\{
w,~ {\rm shift^{LP}}(w),~ ({\rm shift^{LP}})^2(w),~  \dots,~ ({\rm shift^{LP}})^{n-1}(w)
\}
 \\
  &   &   \cup 
\{
w',~ {\rm shift^{LP}}(w'),~ ({\rm shift^{LP}})^2(w'),~  \dots,~ ({\rm shift^{LP}})^{n-1}(w')
\}, 
\end{array}
\end{equation}
where $w' = {\rm rev^{LP}}(w) = {\rm proj^{LP}}( {\rm rev}(w) )$.  
Then $\{   {\rm proj^{LC}}(w),       {\rm proj^{LC}}( {\rm rev}(w)) \}$ is an element of 
${\bf Gauss}^{\rm LC}_{\rm rev}(n)$.  

We have 
a bijection 
\begin{equation}\label{eqn:GaussunoriLPLC}
{\bf Gauss}^{\rm {LP}}_{\rm unori}(n) \longleftrightarrow {\bf Gauss}^{\rm LC}_{\rm rev}(n). 
\end{equation}
by sending 
$[w]^{\rm LP}_{\rm unori} \in {\bf Gauss}^{\rm {LP}}_{\rm unori}(n)$ 
to 
$\{   {\rm proj^{LC}}(w),       {\rm proj^{LC}}( {\rm rev}(w)) \}$  
for any $w \in {\bf Gauss}^{\rm {LP}}(n)$.  

By Proposition~\ref{propAB}, Theorem~\ref{theorem:leftB} and the bijection in (\ref{eqn:GaussunoriLPLC}), we 
obtain a bijection  
\begin{equation}
 {\bf Diagram}_{\rm strict+rev}(n)  \longleftrightarrow {\bf Gauss}^{\rm LC}_{\rm rev}(n),   
\end{equation}
which is claimed in Theorem~\ref{thm:DGL}.

\begin{definition}{\rm 
The {\it left canonical representative} of an unoriented equivalence class 
of left preferred Gauss codes is the smallest representative.  
}\end{definition}

In other words, for any left preferred Gauss code $w$, 
the left canonical representative of $[w]^{\rm LP}_{\rm unori}$ is the smaller one between the left canonical representative  ${\rm proj^{LC}}(w)$ 
of $[w]^{\rm LP}_{\rm ori}$ and 
the left canonical representative  ${\rm proj^{LC}}( {\rm rev}(w))$ of 
$[{\rm rev^{LP}}(w)]^{\rm LP}_{\rm ori}$.

\section{Minimal Gauss codes} \label{section:minimal}

We discuss minimal Gauss codes, which are Gauss codes of minimal virtual diagrams representing virtual doodles.  

For a Gauss code $w= x_1 x_2 \dots x_{2n}$, we assume that $x_{2n+1}$ is $x_1$.   

A Gauss code $w= x_1 x_2 \dots x_{2n}$ is {\it $1$-reducible} if there 
is an integer $i \in \{1, \dots, 2n\}$ such that 
$$(x_i, x_{i+1}) = ( (j, L), (j, R) )  \quad \mbox{or} \quad ( (j, R), (j, L) ), $$  
for some $j$.  Otherwise, it is called  {\it $1$-irreducible}. 

A Gauss code $w= x_1 x_2 \dots x_{2n}$ is {\it $2$-reducible} if one of the following holds: 
\begin{itemize}
\item[(1)] There are integers $i, i' \in \{1, \dots, 2n\}$ such that 
  \begin{equation}
  (x_i, x_{i+1}) = ( (j, L), (k, R) )  \quad \mbox{and} \quad (x_{i'}, x_{i'+1}) = ( (j, R), (k, L) ) 
  \end{equation}
for some $j \neq k$. 
\item[(2)]
There are integers $i, i' \in \{1, \dots, 2n\}$ such that 
  \begin{equation}
  (x_i, x_{i+1}) = ( (j, L), (k, R) )  \quad \mbox{and} \quad (x_{i'}, x_{i'+1}) = ( (k, L), (j, R) ) 
  \end{equation}
or 
  \begin{equation}
  (x_i, x_{i+1}) = ( (j, R), (k, L) )  \quad \mbox{and} \quad (x_{i'}, x_{i'+1}) = ( (k, R), (j, L) ) 
  \end{equation} 
for some $j \neq k$. 
\end{itemize}
Otherwise, it is called  {\it $2$-irreducible}.  

We say that $w$ is {\it minimal} or 
{\it irreducible} if it is $1$-irreducible and $2$-irreducible. 

\begin{lemma}\label{lemmaC}
Let $D$ be a virtual diagram and $w$ a Gauss code of $D$. 
Then $D$ is minimal if and only if $w$ is minimal. 
\end{lemma}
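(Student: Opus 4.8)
The plan is to show that the two combinatorial reducibility conditions on a Gauss code correspond exactly to the geometric possibility of applying an $R_1$ or $R_2$ move after passing to a strictly equivalent diagram. Recall that a virtual diagram $D$ is minimal if no $R_1$ or $R_2$ move is applicable even after changing $D$ up to strict equivalence. By Proposition~\ref{propAA}, choosing a base semiarc and a labeling of real crossings identifies strict equivalence classes of such diagrams with Gauss codes, so the statement amounts to translating "an $R_1$ (resp.\ $R_2$) move is applicable up to strict equivalence" into "the Gauss code is $1$-reducible (resp.\ $2$-reducible)". Since minimality of a diagram and of a Gauss code are each defined as the conjunction of the two conditions, and since strict equivalence does not change the Gauss code (Lemma~\ref{lemmaAA} merely relabels it, preserving all reducibility properties, which are invariant under the moves $\pi_\ast$, ${\rm shift}[m]$), it suffices to treat the $R_1$ and $R_2$ cases separately.

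First I would handle the $R_1$ case. An $R_1$ move creates or removes a monogon (kink): a real crossing $c$ such that two of its four local semiarc-ends are joined by a semiarc meeting no other real crossing. Reading the Gauss code, traversing $D$ and passing through the two branches at $c$ consecutively — with nothing in between — says exactly that for some $i$ the pair $(x_i,x_{i+1})$ has the same first coordinate $j$; since one branch of a crossing is a left branch and the other a right branch, this pair is $((j,L),(j,R))$ or $((j,R),(j,L))$. Conversely, given such a consecutive pair, the semiarc connecting $b_i$ to $b_{i+1}$ together with the crossing $j$ forms a kink (possibly after a detour move to make the connecting semiarc embedded and disjoint from the rest, which is exactly the freedom built into "up to strict equivalence"), so an $R_1$ move applies. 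This gives: $D$ admits an $R_1$ move up to strict equivalence $\iff$ $w$ is $1$-reducible.

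Next, the $R_2$ case. An $R_2$ move removes a bigon between two real crossings $c,d$: a pair of semiarcs, each running from $c$ to $d$, bounding a disc with no other real crossings, so that the two crossings cancel. In the Gauss code this is witnessed by two (disjoint) consecutive pairs, one at position $i$ and one at position $i'$, whose letters use exactly the two labels $j$ and $k$. The distinction between case (1) and case (2) of $2$-reducibility records whether the two strands cross the bigon coherently or not — i.e.\ the combinatorial type of the bigon — which is determined by whether the $L/R$ (left/right branch) data at $j$ and at $k$ in the two pairs are "swapped" (case~(1)) or "parallel" (case~(2), with its two subcases according to orientation). I would verify that for an actual bigon the local branch labels are forced to fall into precisely one of these three patterns, by examining the small picture at the two crossings and the orientations of the four semiarcs, and conversely that each of the three patterns, via a detour move embedding the two connecting semiarcs as the sides of an innermost disc, produces a genuine $R_2$-applicable configuration. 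Hence $D$ admits an $R_2$ move up to strict equivalence $\iff$ $w$ is $2$-reducible. Combining the two equivalences yields $D$ minimal $\iff$ $w$ minimal.

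The main obstacle is the $R_2$ bookkeeping: one must check carefully that \emph{every} geometric bigon gives exactly one of the three listed letter-patterns (no other combination of $L/R$'s can occur) and that the pattern analysis is unaffected by the choice of which side of the bigon is traversed first and by the global orientation of $D$ — in particular confirming that the two subcases in part~(2) genuinely arise and are not redundant with part~(1) or with each other, and that a "bigon" where the two strands would have to be identified (giving $j=k$) is correctly excluded by the condition $j\neq k$. The $R_1$ direction and the reduction to the labeled/based setting via Propositions~\ref{propAA}, \ref{propAB} and Lemma~\ref{lemmaAA} are routine; the care lies entirely in the local case analysis at a pair of crossings. It is worth noting that one should also confirm that performing a detour move to realize the needed $R_1$/$R_2$ configuration never destroys it — this is immediate, since detour moves only alter one semiarc at a time and we are free to leave the relevant connecting semiarc(s) untouched.
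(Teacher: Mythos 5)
Your proposal is correct and follows essentially the same route as the paper, whose proof simply asserts that an $R_1$ (resp.\ $R_2$) move is applicable after detour moves if and only if the code is $1$-reducible (resp.\ $2$-reducible); you spell out the kink and bigon case analysis that the paper leaves as ``easily verified''. Your identification of case (1) with the coherently oriented bigon and the two subcases of (2) with the two chiralities of the antiparallel bigon is the right bookkeeping, so no gap remains beyond carrying out that routine local check.
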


\begin{proof}
It is easily verified that a move $R_1$ can be applied to $D$ (after applying detour moves if necessary) 
if and only if $w$ is $1$-reducible and that a  move $R_2$ can be applied to $D$ (after applying detour moves if necessary) 
if and only if $w$ is $2$-reducible.  
\end{proof}

Note that if a Gauss code $w$ is minimal then any Gauss code which is  
orientedly (or unorientedly) equivalent to $w$ is minimal.

Let ${\bf Gauss}_{\rm ori}^{\rm min}(n)$ (or ${\bf Gauss}_{\rm unori}^{\rm min}(n)$) 
denote the set of 
minimal Gauss codes with $n$ letters modulo oriented (or unoriented) equivalence. 

\begin{proposition}\label{propDB}
There are bijections  
\begin{equation}
{\bf Diagram}^{\rm min}_{\rm strict}(n) \longleftrightarrow  {\bf Gauss}^{\rm min}_{\rm ori}(n) 
\quad \mbox{and} \quad 
{\bf Diagram}^{\rm min}_{\rm strict+rev}(n) \longleftrightarrow  {\bf Gauss}^{\rm min}_{\rm unori}(n). 
\end{equation}
\end{proposition}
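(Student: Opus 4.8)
The plan is to reduce Proposition~\ref{propDB} to Proposition~\ref{propAB} by restricting the bijections already established there. Recall that Proposition~\ref{propAB} gives bijections ${\bf Diagram}_{\rm strict}(n) \longleftrightarrow {\bf Gauss}_{\rm ori}(n)$ and ${\bf Diagram}_{\rm strict+rev}(n) \longleftrightarrow {\bf Gauss}_{\rm unori}(n)$, obtained by assigning to a virtual diagram (with a chosen base semiarc and labeling of real crossings) its Gauss code, and checking well-definedness via Lemma~\ref{lemmaAA}. The key observation is that both sides of Proposition~\ref{propDB} are the preimages, under these bijections, of subsets defined by the same property, namely minimality, so the restriction of a bijection to a subset and its preimage is again a bijection.

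The steps I would carry out, in order, are as follows. First, I would recall that minimality is a well-defined notion on oriented (respectively unoriented) equivalence classes of Gauss codes: this is the remark immediately following Lemma~\ref{lemmaC}, which says that if $w$ is minimal then every Gauss code orientedly (or unorientedly) equivalent to $w$ is minimal. Hence ${\bf Gauss}^{\rm min}_{\rm ori}(n) \subseteq {\bf Gauss}_{\rm ori}(n)$ and ${\bf Gauss}^{\rm min}_{\rm unori}(n) \subseteq {\bf Gauss}_{\rm unori}(n)$ are genuine subsets, cut out by the minimality condition. Second, I would invoke Lemma~\ref{lemmaC} directly: a virtual diagram $D$ is minimal if and only if any (equivalently, every) Gauss code of $D$ is minimal. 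Third, I would note that minimality of a virtual diagram is invariant under strict equivalence and under reversing orientation — this is essentially built into the definition of ``minimal'' (we cannot apply $R_1$ or $R_2$ even after changing the diagram up to strict equivalence, and $R_1$, $R_2$ moves are insensitive to the $1$-manifold orientation) — so ${\bf Diagram}^{\rm min}_{\rm strict}(n) \subseteq {\bf Diagram}_{\rm strict}(n)$ and ${\bf Diagram}^{\rm min}_{\rm strict+rev}(n) \subseteq {\bf Diagram}_{\rm strict+rev}(n)$ are well-defined subsets. Fourth, I would assemble the conclusion: under the bijection of Proposition~\ref{propAB}, a strict equivalence class of virtual diagrams is sent to its oriented equivalence class of Gauss codes, and by Lemma~\ref{lemmaC} the former is minimal exactly when the latter is; therefore the bijection restricts to a bijection between the minimal elements on each side. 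The same argument with ``oriented'' replaced by ``unoriented'' and ``strict'' replaced by ``strict${}+{}$rev'' handles the second bijection.

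The main point requiring care — though it is not really an obstacle — is the verification that the diagram-side and code-side notions of minimality correspond exactly under the bijection of Proposition~\ref{propAB}, rather than merely being related in one direction. This is precisely the content of Lemma~\ref{lemmaC} together with the observation that every Gauss code of a given virtual diagram is obtained from any other by the operations $\pi_\ast$, ${\rm shift}[m]$ and (in the unoriented setting) ${\rm rev}$, each of which preserves minimality by the remark after Lemma~\ref{lemmaC}; so the property ``$D$ has a minimal Gauss code'' does not depend on the auxiliary choices of base semiarc and labeling, and hence descends to strict equivalence classes and matches the property ``$D$ is minimal.'' Once this identification is in place, the proof is the purely formal statement that a bijection between two sets restricts to a bijection between any subset of the first and its image in the second; I would simply state that and conclude.

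\begin{proof}
By Lemma~\ref{lemmaC}, a virtual diagram $D$ is minimal if and only if any Gauss code of $D$ is minimal; moreover, by the remark following Lemma~\ref{lemmaC}, minimality of a Gauss code is preserved under oriented (and unoriented) equivalence, so the notion of minimality descends to ${\bf Gauss}_{\rm ori}(n)$ and ${\bf Gauss}_{\rm unori}(n)$, and ${\bf Gauss}^{\rm min}_{\rm ori}(n)$ and ${\bf Gauss}^{\rm min}_{\rm unori}(n)$ are subsets of ${\bf Gauss}_{\rm ori}(n)$ and ${\bf Gauss}_{\rm unori}(n)$, respectively. Likewise, since the $R_1$ and $R_2$ moves are unaffected by the choice of orientation as a $1$-manifold, minimality of a virtual diagram is invariant under strict equivalence and under reversing orientation, so ${\bf Diagram}^{\rm min}_{\rm strict}(n)$ and ${\bf Diagram}^{\rm min}_{\rm strict+rev}(n)$ are subsets of ${\bf Diagram}_{\rm strict}(n)$ and ${\bf Diagram}_{\rm strict+rev}(n)$.

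Under the bijection ${\bf Diagram}_{\rm strict}(n) \longleftrightarrow {\bf Gauss}_{\rm ori}(n)$ of Proposition~\ref{propAB}, a strict equivalence class of virtual diagrams corresponds to the oriented equivalence class of its Gauss codes; by Lemma~\ref{lemmaC} the former is minimal precisely when the latter is. Hence this bijection carries ${\bf Diagram}^{\rm min}_{\rm strict}(n)$ onto ${\bf Gauss}^{\rm min}_{\rm ori}(n)$ and restricts to a bijection between them. The identical argument applied to the bijection ${\bf Diagram}_{\rm strict+rev}(n) \longleftrightarrow {\bf Gauss}_{\rm unori}(n)$ of Proposition~\ref{propAB} yields the bijection ${\bf Diagram}^{\rm min}_{\rm strict+rev}(n) \longleftrightarrow {\bf Gauss}^{\rm min}_{\rm unori}(n)$.
\end{proof}
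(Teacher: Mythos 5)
Your proof is correct and follows exactly the paper's route: the paper's own proof of this proposition is the one-line observation that it follows from Proposition~\ref{propAB} and Lemma~\ref{lemmaC}, which is precisely the restriction argument you spell out in detail. No gaps; you have merely made explicit the well-definedness checks the paper leaves implicit.
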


\begin{proof}
By Proposition~\ref{propAB} and Lemma~\ref{lemmaC}, we have the result.  
\end{proof}

 Let ${\bf Gauss}^{\rm min, LP}(n)$  be the set of 
 minimal and left preferred Gauss codes with $n$ letters. 

 If $w$ is minimal, then so is ${\rm proj^{LP}}(w)$.  We denote by the same symbols for the restriction of the maps  
 ${\rm proj^{LP}}$,  ${\rm shift^{LP}}$ and  ${\rm rev^{LP}}$ 
  to minimal Gauss codes: 
 \begin{equation}
\begin{array}{rll}
 {\rm proj^{LP}} :& {\bf Gauss}^{\rm min}(n) &\to {\bf Gauss}^{\rm min, LP}(n),   \\ 
 {\rm shift^{LP}} :& {\bf Gauss}^{\rm min, LP}(n) &\to {\bf Gauss}^{\rm min, LP}(n),  
 \quad \mbox{and} \quad \\ 
  {\rm rev^{LP}} :& {\bf Gauss}^{\rm min, LP}(n) &\to {\bf Gauss}^{\rm min, LP}(n).   
\end{array}
\end{equation}

Let ${\bf Gauss}^{\rm min, LP}_{\rm ori}(n)$
(or ${\bf Gauss}^{\rm min, LP}_{\rm unori}(n)$)  be the set of 
oriented (or unoriented)  equivalence classes of minimal and left preferred Gauss codes on $n$ letters.  

\begin{theorem}\label{theorem:leftBmin} 
The projection map ${\rm proj^{LP}}: 
{\bf Gauss}^{\rm min}(n) \to {\bf Gauss}^{\rm min, LP}(n) $ induces  
bijections:  
\begin{equation}
{\bf Gauss}^{\rm min}_{\rm ori}(n) \longleftrightarrow {\bf Gauss}^{\rm min, LP}_{\rm ori}(n) 
\quad \mbox{and} \quad 
{\bf Gauss}^{\rm min}_{\rm unori}(n) \longleftrightarrow {\bf Gauss}^{\rm min, LP}_{\rm unori}(n). 
\end{equation}
\end{theorem}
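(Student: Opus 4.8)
The plan is to mimic the proof of Theorem~\ref{theorem:leftB} (which itself followed from Theorem~\ref{theorem:leftA}) and simply observe that everything restricts compatibly to the minimal setting. First I would note the key closure property: by Lemma~\ref{lemmaC} together with the remark following it, minimality of a Gauss code depends only on its oriented (equivalently, unoriented) equivalence class, so the equivalence classes appearing in ${\bf Gauss}^{\rm min}_{\rm ori}(n)$ and ${\bf Gauss}^{\rm min}_{\rm unori}(n)$ are well defined, and each such class consists entirely of minimal Gauss codes. Moreover ${\rm proj^{LP}}$, ${\rm shift^{LP}}$ and ${\rm rev^{LP}}$ all preserve minimality — this is already asserted in the paragraph just before the theorem, since ${\rm proj^{LP}}$ is built from ${\rm shift}[m]$ and $\pi_\ast$, which only permute and cyclically reorder letters, and likewise for ${\rm shift^{LP}}$, ${\rm rev^{LP}}$, ${\rm rev}$; hence the restricted maps in the displayed diagram are genuinely maps between the ``min'' sets.

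Next I would invoke Theorem~\ref{theorem:leftA} verbatim: for Gauss codes $w, w'$ on $n$ letters, $w$ and $w'$ are orientedly (resp.\ unorientedly) equivalent as Gauss codes iff ${\rm proj^{LP}}(w)$ and ${\rm proj^{LP}}(w')$ are orientedly (resp.\ unorientedly) equivalent as left preferred Gauss codes. This statement makes no reference to minimality, so it holds in particular when $w, w'$ are minimal. Combined with the first paragraph, this gives both well-definedness and injectivity of the induced map ${\bf Gauss}^{\rm min}_{\rm ori}(n) \to {\bf Gauss}^{\rm min, LP}_{\rm ori}(n)$, $[w] \mapsto [{\rm proj^{LP}}(w)]$: well-defined because equivalent minimal $w$'s go to equivalent (and minimal) left preferred codes, and injective by the ``if'' direction of Theorem~\ref{theorem:leftA}. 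Surjectivity is immediate since a minimal left preferred code $v$ is its own ${\rm proj^{LP}}$ image, hence is hit by its own class. The unoriented case is identical, using the unoriented half of Theorem~\ref{theorem:leftA}.

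The only point requiring a sentence of care — and the closest thing to an obstacle — is the surjectivity/representability check: one must be sure that a class in ${\bf Gauss}^{\rm min, LP}_{\rm ori}(n)$ really has a minimal representative in ${\bf Gauss}^{\rm min}(n)$ mapping onto it, but this is trivial because ${\bf Gauss}^{\rm min, LP}(n) \subseteq {\bf Gauss}^{\rm min}(n)$ and ${\rm proj^{LP}}$ fixes left preferred codes. So no genuine difficulty arises; the proof is essentially a one-line reduction. Accordingly I would write:

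\begin{proof}
By Lemma~\ref{lemmaC} and the remark following it, minimality is invariant under oriented and unoriented equivalence, so ${\bf Gauss}^{\rm min}_{\rm ori}(n)$ and ${\bf Gauss}^{\rm min}_{\rm unori}(n)$ consist of equivalence classes all of whose members are minimal; moreover ${\rm proj^{LP}}$ preserves minimality, so $[w] \mapsto [{\rm proj^{LP}}(w)]^{\rm LP}_{\rm ori}$ (resp.\ $[w] \mapsto [{\rm proj^{LP}}(w)]^{\rm LP}_{\rm unori}$) defines maps into ${\bf Gauss}^{\rm min, LP}_{\rm ori}(n)$ (resp.\ ${\bf Gauss}^{\rm min, LP}_{\rm unori}(n)$). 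These maps are well defined and injective by Theorem~\ref{theorem:leftA}, which applies with no change since it does not refer to minimality, and they are surjective because ${\bf Gauss}^{\rm min, LP}(n) \subseteq {\bf Gauss}^{\rm min}(n)$ and ${\rm proj^{LP}}$ is the identity on left preferred Gauss codes. Hence they are bijections.
\end{proof}
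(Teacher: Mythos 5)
Your proposal is correct and follows the same route as the paper, which proves this theorem simply by appealing to Theorem~\ref{theorem:leftA} (cf.\ Theorem~\ref{theorem:leftB}); you merely spell out the routine details (minimality is preserved by equivalence and by ${\rm proj^{LP}}$, injectivity from Theorem~\ref{theorem:leftA}, surjectivity since ${\rm proj^{LP}}$ fixes left preferred codes) that the paper leaves implicit.
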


\begin{proof}
It follows from Theorem~\ref{theorem:leftA} (cf. Theorem~\ref{theorem:leftB}). 
\end{proof}

Let ${\bf Gauss}^{\rm min, LC}(n)$ be the set of minimal and left canonical Gauss codes of $n$ letters. 

If $w$ is minimal, then so is ${\rm proj^{LC}}(w)$.  We denote by the same symbols for the restriction of the maps  
 ${\rm proj^{LC}}$ and  ${\rm rev^{LC}}$ 
  to minimal Gauss codes: 
 \begin{equation}
\begin{array}{rll}
 {\rm proj^{LC}} :& {\bf Gauss}^{\rm min}(n) &\to {\bf Gauss}^{\rm min, LC}(n)  
 \quad \mbox{and} \quad \\ 
  {\rm rev^{LC}} :& {\bf Gauss}^{\rm min, LC}(n) &\to {\bf Gauss}^{\rm min, LC}(n).   
\end{array}
\end{equation}

Let ${\bf Gauss}^{\rm min, LC}_{\rm rev}(n)$ be the quotient of 
${\bf Gauss}^{\rm min, LC}(n)$ by the action of ${\rm rev^{LC}}$, i.e., the elements of 
${\bf Gauss}^{\rm min, LC}_{\rm rev}(n)$ are $\{w, {\rm rev^{LC}}(w) \}$ for  $w \in {\bf Gauss}^{\rm min, LC}(n)$.  Note that ${\bf Gauss}^{\rm min, LC}_{\rm rev}(n)$ is the subset of 
${\bf Gauss}^{\rm LC}_{\rm rev}(n)$ consisting of elements  $\{w, {\rm rev^{LC}}(w) \}$ for  $w \in {\bf Gauss}^{\rm min, LC}(n)$. 

Considering the restrictions of the bijections in Theorem~\ref{thm:DGL}, we have 
bijections  
\begin{equation*}
 {\bf Diagram}^{\rm min}_{\rm strict}(n)  \longleftrightarrow {\bf Gauss}^{\rm min, LC}(n) 
\end{equation*}
and 
\begin{equation*}
 {\bf Diagram}^{\rm min}_{\rm strict+rev}(n)  \longleftrightarrow {\bf Gauss}^{\rm min, LC}_{\rm rev}(n),  
\end{equation*}
which are claimed in Theorem~\ref{thm:DGLmin}.

\section{Making a list of virtual doodles}  \label{sect:list}

By Theorem~\ref{thm:DDG} or Theorem~\ref{theorem:leftBmin}, 
in order to make a list of ${\bf Doodle}_{\rm ori}(n)$,  
we may use  ${\bf Gauss}^{\rm min, LC}(n)$ 
or ${\bf Gauss}^{\rm min, LP}_{\rm ori}(n)$.  

Here is a way to make a list of ${\bf Gauss}^{\rm min, LC}(n)$ or ${\bf Gauss}^{\rm min, LP}_{\rm ori}(n)$.  
\begin{itemize}
\item Make a list of the elements of ${\bf Gauss}^{\rm {LP}}(n)$.  
Removing elements which are 
$1$-reducible or $2$-reducible, we have a list of 
${\bf Gauss}^{\rm min, LP}(n)$.  
\item Let $G[1] = {\bf Gauss}^{\rm min, LP}(n)$. Take the smallest element, say $w_1$, of $G[1]$. 
Make a list of all elements that are orientedly equivalent to $w_1$, which are 
$$w_1, {\rm shift^{LP}}(w_1), ({\rm shift^{LP}})^2(w_1),  \dots, ({\rm shift^{LP}})^{n-1}(w_1).  $$ 
Remove them from $G[1]$ and let $G[2]$ be the result.  
\item Inductively, assume that $G[k]$ is already defined.  If $G[k]$ is non-empty, take the smallest element, say $w_k$, of $G[k]$. Make a list of all elements that are orientedly equivalent to $w_k$.  Remove them from $G[k]$ and let $G[k+1]$ be the result.   
\item This procedure must finish when we have $G[m+1] =\emptyset$ for some $m$. 
Then $\{w_1, \dots, w_m\}$ is a  list of representatives of all elements of ${\bf Gauss}^{\rm min, LP}_{\rm ori}(n)$. Moreover,  $\{w_1, \dots, w_m\}$ is a  list of the elements of ${\bf Gauss}^{\rm min, LC}(n)$. 
\end{itemize}

Once we have a list of ${\bf Gauss}^{\rm min, LC}(n)$ or ${\bf Gauss}^{\rm min, LP}_{\rm ori}(n)$, using ${\rm rev}^{\rm LC}$ or ${\rm rev}^{\rm LP}$, 
 we have 
a list of ${\bf Gauss}^{\rm min, LC}_{\rm rev}(n)$ or ${\bf Gauss}^{\rm min, LP}_{\rm unori}(n)$.

\begin{example}{\rm 
Let us consider a case of $n=3$.  
The order on $J$ is 
\begin{equation}
 (1, L) < (1, R) < (2, L) < (2, R) <  (3, L) < (3, R).  
 \end{equation}
For simplicity, we put 
\begin{equation}\label{eqn:6J}
 1= (1, L) , \quad 2= (1, R),  \quad 3= (2, L), \quad 4= (2, R), \quad 5=  (3, L), \quad 6=  (3, R). 
 \end{equation}
 For example, $w=  (1, L) (2, R) (2, L)  (3, L) (1, R) (3, R)$ is denoted by 
 $(1, 4, 3, 5, 2, 6)$.  
 In this notation, $w=(m_1, m_2, \dots, m_6)$ is left preferred if and only if 
 its subsequence consisting of odd numbers is $(1,3,5)$ and $m_1 =1$.  
 
 Let ${\bf GLP}$ $(= {\bf Gauss}^{\rm {LP}}(3))$ be the set of elements $(m_1, m_2, \dots, m_6)$ which are left preferred.  It consists of $60$ elements: 
\begin{eqnarray*}
&& (1, 2, 3, 4, 5, 6), (1, 2, 3, 4, 6, 5), (1, 2, 3, 5, 4, 6), (1, 2, 3, 5, 6, 4), (1, 2, 3, 6, 4, 5), \\ 
&& (1, 2, 3, 6, 5, 4), (1, 2, 4, 3, 5, 6), (1, 2, 4, 3, 6, 5), (1, 2, 4, 6, 3, 5), (1, 2, 6, 3, 4, 5), \\
&& (1, 2, 6, 3, 5, 4), (1, 2, 6, 4, 3, 5), (1, 3, 2, 4, 5, 6), (1, 3, 2, 4, 6, 5), (1, 3, 2, 5, 4, 6), \\
&& (1, 3, 2, 5, 6, 4), (1, 3, 2, 6, 4, 5), (1, 3, 2, 6, 5, 4), (1, 3, 4, 2, 5, 6), (1, 3, 4, 2, 6, 5), \\
&& (1, 3, 4, 5, 2, 6), (1, 3, 4, 5, 6, 2), (1, 3, 4, 6, 2, 5), (1, 3, 4, 6, 5, 2), (1, 3, 5, 2, 4, 6), \\ 
&& (1, 3, 5, 2, 6, 4), (1, 3, 5, 4, 2, 6), (1, 3, 5, 4, 6, 2), (1, 3, 5, 6, 2, 4), (1, 3, 5, 6, 4, 2), \\
&& (1, 3, 6, 2, 4, 5), (1, 3, 6, 2, 5, 4), (1, 3, 6, 4, 2, 5), (1, 3, 6, 4, 5, 2), (1, 3, 6, 5, 2, 4), \\ 
&& (1, 3, 6, 5, 4, 2), (1, 4, 2, 3, 5, 6), (1, 4, 2, 3, 6, 5), (1, 4, 2, 6, 3, 5), (1, 4, 3, 2, 5, 6), \\
&& (1, 4, 3, 2, 6, 5), (1, 4, 3, 5, 2, 6), (1, 4, 3, 5, 6, 2), (1, 4, 3, 6, 2, 5), (1, 4, 3, 6, 5, 2), \\
&& (1, 4, 6, 2, 3, 5), (1, 4, 6, 3, 2, 5), (1, 4, 6, 3, 5, 2), (1, 6, 2, 3, 4, 5), (1, 6, 2, 3, 5, 4), \\
&& (1, 6, 2, 4, 3, 5), (1, 6, 3, 2, 4, 5), (1, 6, 3, 2, 5, 4), (1, 6, 3, 4, 2, 5), (1, 6, 3, 4, 5, 2), \\
&& (1, 6, 3, 5, 2, 4), (1, 6, 3, 5, 4, 2), (1, 6, 4, 2, 3, 5), (1, 6, 4, 3, 2, 5), (1, 6, 4, 3, 5, 2).
\end{eqnarray*}

In order to make a list of  ${\bf Gauss}^{\rm min, LP}(3)$, we need to remove elements which are 
$1$-reducible or $2$-reducible from ${\bf GLP}$. 

In the notation using (\ref{eqn:6J}), $(m_1, m_2, \dots, m_6)$ is $1$-reducible if and only if 
at least one of 
\begin{equation}
(m_1, m_2), (m_2, m_3), \dots, (m_5, m_6), (m_6, m_1)
 \end{equation} 
belongs to 
\begin{equation}
{\bf A1}:= \{ (1,2), (2,1), (3,4), (4,3), (5,6), (6,5) \}. 
 \end{equation}
 
 Let 
 \begin{equation}
{\bf PrepA2} := \{ (1, 2), (1, 3), (2, 1), (2, 3), (3, 1), (3, 2) \},  
 \end{equation}
which is the set of pairs $(i,j)$ with $i \neq j$ and $1\leq i, j \leq 3$. 
For an element $a \in {\bf PrepA2}$, we denote by $a[1]$ and $a[2]$ 
the first and second components of $a$. 

Let ${\bf A2}$ be a set consisting of all $( (p_1, p_2), (q_1, q_2) )$ with $1 \leq p_1, p_2, q_1, q_2 \leq 6$ such that 
\begin{itemize}
\item[(i)] $( (p_1, p_2), (q_1, q_2) ) = ( (2a[1]-1, 2a[2]), (2a[1], 2a[2]-1)$ 
for some $a \in {\bf PrepA2}$, 
\item[(ii)] $( (p_1, p_2), (q_1, q_2) ) = ( (2a[1]-1, 2a[2]), (2a[2]-1, 2a[1])$ 
for some $a \in {\bf PrepA2}$, or 
\item[(iii)] $( (p_1, p_2), (q_1, q_2) ) = ( (2a[1], 2a[2]-1), (2a[2], 2a[1]-1)$ 
for some $a \in {\bf PrepA2}$.  
\end{itemize}
Then ${\bf A2}$ consists of 
\begin{eqnarray*}
&& ((1, 4), (2, 3)), \quad ((1, 4), (3, 2)), \quad ((1, 6), (2, 5)), \quad ((1, 6), (5, 2)), \quad ((2, 3), (4, 1)), \\ 
&& ((2, 5), (6, 1)), \quad ((3, 2), (1, 4)), \quad ((3, 2), (4, 1)), \quad ((3, 6), (4, 5)), \quad ((3, 6), (5, 4)), \\
&& ((4, 1), (2, 3)), \quad ((4, 5), (6, 3)), \quad ((5, 2), (1, 6)), \quad ((5, 2), (6, 1)), \quad ((5, 4), (3, 6)), \\ 
&& ((5, 4), (6, 3)), \quad ((6, 1), (2, 5)), \quad ((6, 3), (4, 5)). 
 \end{eqnarray*}
 
 In the notation, $(m_1, m_2, \dots, m_6)$ is $2$-reducible if and only if 
 there are integers $i$ and $j$ in $\{1, \dots, 6\}$ such that 
 $( (m_i, m_{i+1}), (m_j, m_{j+1}) )$ belongs to ${\bf A2}$.  
 Here we assume $m_7= m_1$. 
 
 Let ${\bf GLPmin}$ $(={\bf Gauss}^{\rm min, LP}(3))$ be the set of elements 
$(m_1, m_2, \dots, m_6)$ which are minimal and left preferred.  It is 
obtained from ${\bf GLP}$ by removing all elements that are $1$-reducible or $2$-reducible.  
  
 The set ${\bf GLPmin}$ consists of 
 \begin{eqnarray*}
 (1, 3, 2, 6, 4, 5), (1, 3, 5, 2, 6, 4), (1, 3, 5, 4, 2, 6), (1, 3, 6, 4, 2, 5), (1, 4, 2, 6, 3, 5), (1, 6, 4, 2, 3, 5).
 \end{eqnarray*}

This is a complete list of ${\bf Gauss}^{\rm min, LP}(3)$.  
Now we consider oriented equivalence classes.  

For an element $w = (m_1, m_2, \dots, m_6)$ of ${\bf GLPmin}$, 
its oriented equivalence class is 
 \begin{eqnarray*}
 \{ w,  \quad  {\rm shift^{LP}}(w), \quad  ({\rm shift^{LP}})^2(w) \}.  
 \end{eqnarray*}
 
${\rm shift^{LP}}(w)$ is obtained from $(m_1 -2, m_2 -2 , \dots, m_6 -2)$ 
by rotation so that the first element becomes $1$.  Here 
$0$ and $-1$ in $(m_1 -2, m_2 -2 , \dots, m_6 -2)$ 
are identified with $6$ and $5$ respectively. 

$({\rm shift^{LP}})^2(w)$ is obtained from $(m_1 -4, m_2 -4 , \dots, m_6 -4)$ 
by rotation so that the first element becomes $1$.   

For example, the oriented equivalence class of $(1, 3, 2, 6, 4, 5)$ is 
 \begin{eqnarray*}
\{ (1, 3, 2, 6, 4, 5),  \quad (1, 3, 5, 4, 2, 6), \quad (1, 6, 4, 2, 3, 5) \}.  
 \end{eqnarray*}
 The smallest element is $(1, 3, 2, 6, 4, 5)$, which is the left canonical representative of the class. 

Let ${\bf GLPminOri}$ be the set of oriented equivalence classes 
of elements of ${\bf GLPmin}$.   It consists of two elements, 
\begin{equation*}
\begin{array}{llcc}
(1) & d_{3,1}^+ &=& [(1, 3, 2, 6, 4, 5)]_{\rm ori} = \{ (1, 3, 2, 6, 4, 5),  \quad (1, 3, 5, 4, 2, 6), \quad (1, 6, 4, 2, 3, 5) \},  \\ 
(2) & d_{3,1}^- &=& [(1, 3, 5, 2, 6, 4)]_{\rm ori} =  \{ (1, 3, 5, 2, 6, 4),  \quad (1, 3, 6, 4, 2, 5),  \quad (1, 4, 2, 6, 3, 5) \}.  
\end{array}
\end{equation*}
(The meaning of symbols $d_{3,1}^+$ and $d_{3,1}^-$ will be explained later.) 

Note that $(1, 3, 2, 6, 4, 5)$ and $ (1, 3, 5, 2, 6, 4)$ are left canonical Gauss codes and 
 ${\bf GLC}$, the set of left canobnical elements, is 
 $\{   (1, 3, 2, 6, 4, 5), (1, 3, 5, 2, 6, 4) \}$.

Now we consider unoriented equivalence classes.  

We first translate the map 
${\rm rev}^{\rm LP}: {\bf Gauss}^{\rm min, LP}(3) \to {\bf Gauss}^{\rm min, LP}(3)$ to 
a map 
$${\rm rev}^{\rm LP}: {\bf GLPmin} \to {\bf GLPmin}$$ as follows.  

Let $\pi$ be a permutation of $\{ 1, \dots, 6\}$ with 
 \begin{eqnarray*}
 \pi(1)=5, \quad \pi(2)= 6, \quad \pi(3)= 3, \quad \pi(4)= 4, \quad \pi(5)= 1, \quad \pi(6)= 2.  
  \end{eqnarray*}
In general, let $\pi$ be a permutation of $\{ 1, \dots, 2n\}$ with 
 \begin{eqnarray*}
\pi(k) = 2n-k \quad \mbox{(for odd $k$)} \quad \mbox{and} \quad \pi(k) = 2n+2  -k \quad 
\mbox{(for even $k$)}.  
 \end{eqnarray*}
For an element $w= (m_1, m_2, \dots, m_6)$,  let 
 \begin{eqnarray*}
 \pi_\ast({\rm rev}(w))= 
\pi_\ast(m_6, \dots, m_2, m_1) = (\pi(m_6), \dots, \pi(m_2), \pi(m_1)).   
\end{eqnarray*}
It is weakly left preferred, i.e., the subsequence consisting of odd numbers is $(1,3,5)$. 
Apply a rotation to it, and we have a left preferred element.  This is 
${\rm rev}^{\rm LP}(w)$.   

For example, for $w= (1, 3, 2, 6, 4, 5)$, 
 \begin{eqnarray*}
 \pi_\ast({\rm rev}(1, 3, 2, 6, 4, 5) )= 
\pi_\ast(5, 4, 6, 2, 3, 1) = (1, 4, 2, 6, 3, 5),    
\end{eqnarray*}
and 
${\rm rev}^{\rm LP}(1, 3, 2, 6, 4, 5) = (1, 4, 2, 6, 3, 5)$.   

This implies that $(1, 3, 2, 6, 4, 5)$ is unorientedly equivalent to $(1, 4, 2, 6, 3, 5)$.  
Therefore the unoriented equivalence class $[(1, 3, 2, 6, 4, 5)]_{\rm unori}$ 
of $(1, 3, 2, 6, 4, 5)$ is the union of 
$[(1, 3, 2, 6, 4, 5)]_{\rm ori}$ and $[(1, 4, 2, 6, 3, 5)]_{\rm ori}$.  

Let ${\bf GLPminUnori}$ be the set of unoriented equivalence classes of elements of ${\bf GLPmin}$. 
It consists of a single element, 
\begin{equation*}
\begin{array}{llcc}
(1) &  d_{3,1} & = & [(1, 3, 2, 6, 4, 5)]_{\rm unori}= [(1, 3, 2, 6, 4, 5)]_{\rm ori} \cup [(1, 4, 2, 6, 3, 5)]_{\rm ori}.  
\end{array}
\end{equation*} 
The left canonical representative of the class $[(1, 3, 2, 6, 4, 5)]_{\rm unori}$ is $(1, 3, 2, 6, 4, 5)$, since the smallest element of $[(1, 3, 2, 6, 4, 5)]_{\rm ori}$ is  $(1, 3, 2, 6, 4, 5)$ and that of 
$[(1, 4, 2, 6, 3, 5)]_{\rm ori}$ is $(1, 3, 5, 2, 6, 4)$.  

Therefore we see that ${\bf GLPminOri}$ ($={\bf Gauss}^{\rm min, LP}_{\rm ori}(3)$) consists of $2$ elements 
 \begin{eqnarray*}
d_{3,1}^+= [(1, 3, 2, 6, 4, 5)]_{\rm ori}  \quad \mbox{and} \quad 
d_{3,1}^-= [(1, 3, 5, 2, 6, 4)]_{\rm ori}, 
\end{eqnarray*}
which stand for  
 \begin{eqnarray*}
[(1,L) (2,L) (1,R) (3,R) (2,R) (3,L)]_{\rm ori}    \quad \mbox{and} \quad [(1,L)(2,L)(3,L)(1,R)(3,R)(2,R)]_{\rm ori},      
\end{eqnarray*} 
and the set ${\bf GLPminUnori}$ ( $={\bf Gauss}^{\rm min, LP}_{\rm unori}(3)$) consists of a single element  
 \begin{eqnarray*}
d_{3,1} = [(1, 3, 2, 6, 4, 5)]_{\rm unoori}, 
\end{eqnarray*}
which stands for 
 \begin{eqnarray*}
[(1,L) (2,L) (1,R) (3,R) (2,R) (3,L)]_{\rm unori}.    
\end{eqnarray*}

The symbol $d_{3,1}$ means that it is the first element of 
${\bf GLPminUnori}$ identified with  ${\bf Gauss}^{\rm min, LP}_{\rm unori}(3)$.  
The smallest representative of $d_{3,1}$  is $(1, 3, 2, 6, 4, 5)$.   
$d_{3,1}^+$ and $d_{3,1}^+$ are  elements of  
${\bf GLPminOri}$ identified with ${\bf Gauss}^{\rm min, LP}_{\rm ori}(3)$ 
such that $d_{3,1}^+$ is represented by the smallest element $(1, 3, 2, 6, 4, 5)$ of 
$d_{3,1}$, and $d_{3,1}^-$ is represented by ${\rm rev}^{\rm LP}(1, 3, 2, 6, 4, 5)$.  

}\end{example} 

 \begin{example}{\rm 
Let us consider a case of $n=4$.  
The order on $J$ is 
\begin{equation}
 (1, L) < (1, R) < (2, L) < (2, R) <  (3, L) < (3, R) <  (4, L) < (4, R).  
 \end{equation}
For simplicity, we put 
\begin{eqnarray*}
&& 1= (1, L) , \quad 2= (1, R),  \quad 3= (2, L), \quad 4= (2, R),  \\ 
&& 5=  (3, L), \quad 6=  (3, R) , \quad 7=  (4, L), \quad 8=  (4, R). 
\end{eqnarray*}
 In this notation, $w=(m_1, m_2, \dots, m_8)$ is left preferred if and only if 
 its subsequence consisting of odd numbers is $(1,3,5,7)$ and $m_1 =1$.  
 
 Let ${\bf GLP}$ $(= {\bf Gauss}^{\rm {LP}}(4))$ be the set of elements $(m_1, m_2, \dots, m_8)$ which are left preferred.  It consists of $840$ elements: 
\begin{eqnarray*}
&& (1, 2, 3, 4, 5, 6, 7, 8), \quad (1, 2, 3, 4, 5, 6, 8, 7), \quad (1, 2, 3, 4, 5, 7, 6, 8), \quad (1, 2, 3, 4, 5, 7, 8, 6), \\ 
&& (1, 2, 3, 4, 5, 8, 6, 7), \quad (1, 2, 3, 4, 5, 8, 7, 6), \quad (1, 2, 3, 4, 6, 5, 7, 8), \quad (1, 2, 3, 4, 6, 5, 8, 7), \\ 
&& (1, 2, 3, 4, 6, 8, 5, 7), \quad (1, 2, 3, 4, 8, 5, 6, 7), \quad (1, 2, 3, 4, 8, 5, 7, 6), \quad (1, 2, 3, 4, 8, 6, 5, 7), \\ 
&& \hspace{3cm} \dots  \dots \\
&& \hspace{3cm} \dots \dots \\
&& (1, 8, 6, 3, 4, 5, 2, 7), \quad (1, 8, 6, 3, 4, 5, 7, 2), \quad (1, 8, 6, 3, 5, 2, 4, 7), \quad (1, 8, 6, 3, 5, 2, 7, 4), \\ 
&& (1, 8, 6, 3, 5, 4, 2, 7), \quad (1, 8, 6, 3, 5, 4, 7, 2), \quad (1, 8, 6, 3, 5, 7, 2, 4), \quad (1, 8, 6, 3, 5, 7, 4, 2), \\ 
&& (1, 8, 6, 4, 2, 3, 5, 7), \quad (1, 8, 6, 4, 3, 2, 5, 7), \quad (1, 8, 6, 4, 3, 5, 2, 7), \quad (1, 8, 6, 4, 3, 5, 7, 2).
\end{eqnarray*}

In order to make a list of  ${\bf Gauss}^{\rm min, LP}(4)$, we need to remove elements which are 
$1$-reducible or $2$-reducible from ${\bf GLP}$. 

In the notation, $(m_1, m_2, \dots, m_8)$ is $1$-reducible if and only if 
at least of 
\begin{equation}
(m_1, m_2), (m_2, m_3), \dots, (m_7, m_8), (m_8, m_1)
 \end{equation} 
belongs to 
\begin{equation}
{\bf A1}:= \{ (1,2), (2,1), (3,4), (4,3), (5,6), (6,5), (7,8), (8,7) \}. 
 \end{equation}
 
 Let 
 \begin{equation}
{\bf PrepA2} = \{ (1, 2), (1, 3), (1,4), (2, 1), (2, 3), (2,4), (3, 1), (3, 2), (3,4), (4,1), (4,2), (4,3) \},  
 \end{equation}
which is the set of pairs $(i,j)$ with $i \neq j$ and $1\leq i, j \leq 4$. 
For an element $a \in {\bf PrepA2}$, we denote by $a[1]$ and $a[2]$ 
the first and second components of $a$. 

Let ${\bf A2}$ be a set consisting of all $( (p_1, p_2), (q_1, q_2) )$   with 
$ 1 \leq p_1, p_2, q_1, q_2 \leq 8$  
such that 
\begin{itemize}
\item[(i)] $( (p_1, p_2), (q_1, q_2) ) = ( (2a[1]-1, 2a[2]), (2a[1], 2a[2]-1)$ 
for some $a \in {\bf PrepA2}$, 
\item[(ii)] $( (p_1, p_2), (q_1, q_2) ) = ( (2a[1]-1, 2a[2]), (2a[2]-1, 2a[1])$ 
for some $a \in {\bf PrepA2}$, or 
\item[(iii)] $( (p_1, p_2), (q_1, q_2) ) = ( (2a[1], 2a[2]-1), (2a[2], 2a[1]-1)$ 
for some $a \in {\bf PrepA2}$.  
\end{itemize}
Then ${\bf A2}$ consists of 
\begin{eqnarray*}
&& ((1, 4), (2, 3)), \quad ((1, 4), (3, 2)), \quad ((1, 6), (2, 5)), \quad ((1, 6), (5, 2)), \quad ((1, 8), (2, 7)), \\ 
&& ((1, 8), (7, 2)), \quad ((2, 3), (4, 1)), \quad ((2, 5), (6, 1)), \quad ((2, 7), (8, 1)), \quad ((3, 2), (1, 4)), \\ 
&& ((3, 2), (4, 1)), \quad ((3, 6), (4, 5)), \quad ((3, 6), (5, 4)), \quad ((3, 8), (4, 7)), \quad ((3, 8), (7, 4)), \\ 
&& ((4, 1), (2, 3)), \quad ((4, 5), (6, 3)), \quad ((4, 7), (8, 3)), \quad ((5, 2), (1, 6)), \quad ((5, 2), (6, 1)), \\ 
&& ((5, 4), (3, 6)), \quad ((5, 4), (6, 3)), \quad ((5, 8), (6, 7)), \quad ((5, 8), (7, 6)), \quad ((6, 1), (2, 5)), \\ 
&& ((6, 3), (4, 5)), \quad ((6, 7), (8, 5)), \quad ((7, 2), (1, 8)), \quad ((7, 2), (8, 1)), \quad ((7, 4), (3, 8)), \\ 
&& ((7, 4), (8, 3)), \quad ((7, 6), (5, 8)), \quad ((7, 6), (8, 5)), \quad ((8, 1), (2, 7)), \quad ((8, 3), (4, 7)), \\ 
&& ((8, 5), (6, 7)).  
\end{eqnarray*}
 
 In the notion, $(m_1, m_2, \dots, m_8)$ is $2$-reducible if and only if 
 there are integers $i$ and $j$ in $\{1, \dots, 8\}$ such that 
 $( (m_i, m_{i+1}), (m_j, m_{j+1}) )$ belongs to ${\bf A2}$.  
 Here we assume $m_9= m_1$. 
 
 Let ${\bf GLPmin}$ $(={\bf Gauss}^{\rm min, LP}(4))$ be the set of elements 
$(m_1, m_2, \dots, m_8)$ which are left preferred and minimal.  It is 
obtained from ${\bf GLP}$ by removing all elements that are $1$-reducible or $2$-reducible.  
  
 The set ${\bf GLPmin}$ consists of $124$ elements, 
 \begin{eqnarray*}
&& (1, 3, 2, 4, 5, 7, 6, 8), \quad (1, 3, 2, 4, 6, 8, 5, 7), \quad (1, 3, 2, 5, 4, 7, 6, 8), \quad (1, 3, 2, 5, 4, 8, 6, 7), \\ 
&& (1, 3, 2, 5, 7, 4, 6, 8), \quad (1, 3, 2, 5, 7, 6, 4, 8), \quad (1, 3, 2, 5, 8, 6, 4, 7), \quad (1, 3, 2, 6, 4, 8, 5, 7), \\ 
&& (1, 3, 2, 6, 8, 4, 5, 7), \quad (1, 3, 2, 6, 8, 5, 4, 7), \quad (1, 3, 2, 8, 4, 5, 7, 6), \quad (1, 3, 2, 8, 5, 7, 4, 6), \\ 
&& \hspace{3cm} \dots  \dots \\
&& \hspace{3cm} \dots \dots \\
&& (1, 8, 3, 2, 6, 4, 5, 7), \quad (1, 8, 3, 5, 2, 4, 6, 7), \quad (1, 8, 3, 5, 4, 2, 6, 7), \quad (1, 8, 3, 5, 7, 6, 2, 4), \\ 
&& (1, 8, 3, 6, 4, 2, 5, 7), \quad (1, 8, 4, 2, 3, 5, 7, 6), \quad (1, 8, 4, 2, 6, 3, 5, 7), \quad (1, 8, 4, 6, 2, 3, 5, 7), \\ 
&& (1, 8, 4, 6, 3, 2, 5, 7), \quad (1, 8, 6, 2, 3, 5, 4, 7), \quad (1, 8, 6, 3, 5, 2, 4, 7), \quad (1, 8, 6, 4, 2, 3, 5, 7). 
\end{eqnarray*}

This is a complete list of ${\bf Gauss}^{\rm min, LP}(4)$.  
Now we consider oriented equivalence classes.  

For an element $w = (m_1, m_2, \dots, m_8)$ of ${\bf GLPmin}$, 
its oriented equivalence class is 
 \begin{eqnarray*}
 \{ w,  \quad  {\rm shift^{LP}}(w), \quad  ({\rm shift^{LP}})^2(w), \quad  ({\rm shift^{LP}})^3(w) \}.  
 \end{eqnarray*}
 
${\rm shift^{LP}}(w)$ is obtained from $(m_1 -2, m_2 -2 , \dots, m_8 -2)$ 
by rotation so that the first element becomes $1$.  Here 
$0$ and $-1$ in $(m_1 -2, m_2 -2 , \dots, m_8-2)$ 
are identified with $8$ and $7$ respectively. 

$({\rm shift^{LP}})^2(w)$ is obtained from $(m_1 -4, m_2 -4 , \dots, m_8 -4)$ 
by rotation so that the first element becomes $1$.   

$({\rm shift^{LP}})^3(w)$ is obtained from $(m_1 -6, m_2 -6 , \dots, m_8 -6)$ 
by rotation so that the first element becomes $1$.   

For example, for $w = (1, 3, 2, 4, 5, 7, 6, 8)$, 
${\rm shift^{LP}}(w) =(1, 8, 2, 3, 5, 4, 6, 7)$, $({\rm shift^{LP}})^2(w) =(1, 3, 2, 4, 5, 7, 6, 8)$ and $({\rm shift^{LP}})^3(w) =(1, 8, 2, 3, 5, 4, 6, 7)$.  The  oriented equivalence class 
$[w]_{\rm ori} = [(1, 3, 2, 4, 5, 7, 6, 8)]_{\rm ori}$  is 
 \begin{eqnarray*}
\{  (1, 3, 2, 4, 5, 7, 6, 8), \quad (1, 3, 2, 4, 5, 7, 6, 8), \quad (1, 8, 2, 3, 5, 4, 6, 7), \quad (1, 8, 2, 3, 5, 4, 6, 7) \}.  
 \end{eqnarray*}
 The left canonical representative of the class is $(1, 3, 2, 4, 5, 7, 6, 8)$. 

Let ${\bf GLPminOri}$ be the set of oriented equivalence classes 
of elements of ${\bf GLPmin}$.   It consists of $32$ elements, 
\begin{equation*}
\begin{array}{llcc}
(1) &   d_{4, 1}^+ &=& \{(1, 3, 2, 4, 5, 7, 6, 8), (1, 3, 2, 4, 5, 7, 6, 8), (1, 8, 2, 3, 5, 4, 6, 7), (1, 8, 2, 3, 5, 4, 6, 7)\}, \\ 
(2) &   d_{4, 2}^+ &=&  \{(1, 3, 2, 4, 6, 8, 5, 7), (1, 3, 5, 4, 6, 8, 2, 7), (1, 3, 5, 7, 6, 8, 2, 4), (1, 8, 2, 4, 6, 3, 5, 7)\}, \\ 
(3) &   d_{4, 3}^+ &=&  \{(1, 3, 2, 5, 4, 7, 6, 8), (1, 8, 2, 3, 5, 4, 7, 6), (1, 8, 3, 2, 4, 5, 7, 6), (1, 8, 3, 2, 5, 4, 6, 7)\}, \\ 
(4) &   d_{4, 4}^+ &=&  \{(1, 3, 2, 5, 4, 8, 6, 7), (1, 3, 5, 4, 7, 6, 2, 8), (1, 8, 3, 2, 6, 4, 5, 7), (1, 8, 4, 2, 3, 5, 7, 6)\}, \\ 
(5) &   d_{4, 5}^+ &=&  \{(1, 3, 2, 5, 7, 4, 6, 8), (1, 3, 8, 2, 4, 5, 7, 6), (1, 6, 8, 2, 3, 5, 4, 7), (1, 8, 3, 5, 2, 4, 6, 7)\}, \\ 
(6) &   d_{4, 6}^+ &=&  \{(1, 3, 2, 5, 7, 6, 4, 8), (1, 3, 2, 8, 4, 5, 7, 6), (1, 8, 3, 5, 4, 2, 6, 7), (1, 8, 6, 2, 3, 5, 4, 7)\}, \\ 
(7) &   d_{4, 7}^+ &=&  \{(1, 3, 2, 5, 8, 6, 4, 7), (1, 3, 5, 4, 7, 2, 8, 6), (1, 4, 2, 8, 3, 5, 7, 6), (1, 8, 3, 6, 4, 2, 5, 7)\}, \\ 
(8) &   d_{4, 8}^+ &=&  \{(1, 3, 2, 6, 4, 8, 5, 7), (1, 3, 5, 4, 8, 6, 2, 7), (1, 3, 5, 7, 6, 2, 8, 4), (1, 8, 4, 2, 6, 3, 5, 7)\}, \\ 
(9) &   d_{4, 9}^+ &=&  \{(1, 3, 2, 6, 8, 4, 5, 7), (1, 3, 5, 4, 8, 2, 6, 7), (1, 3, 5, 7, 6, 2, 4, 8), (1, 8, 4, 6, 2, 3, 5, 7)\}, \\ 
(10) & d_{4,10}^+ &=&  \{(1, 3, 2, 6, 8, 5, 4, 7), (1, 3, 5, 4, 8, 2, 7, 6), (1, 8, 3, 5, 7, 6, 2, 4), (1, 8, 4, 6, 3, 2, 5, 7)\}, \\ 
(11) & d_{4,11}^+ &=&  \{(1, 3, 2, 8, 5, 7, 4, 6), (1, 3, 8, 2, 5, 7, 6, 4), (1, 6, 8, 3, 5, 4, 2, 7), (1, 8, 6, 3, 5, 2, 4, 7)\}, \\ 
(12) & d_{4,12}^+ &=&  \{(1, 3, 2, 8, 6, 4, 5, 7), (1, 3, 5, 4, 2, 8, 6, 7), (1, 3, 5, 7, 6, 4, 2, 8), (1, 8, 6, 4, 2, 3, 5, 7)\}, \\ 
(13) & d_{4,13}^+ &=&  \{(1, 3, 5, 2, 4, 7, 6, 8), (1, 3, 8, 2, 5, 4, 6, 7), (1, 6, 8,  3, 2, 4, 5, 7), (1, 8, 2, 3, 5, 7, 4, 6)\}, \\ 
(14) & d_{4,14}^+ &=&  \{(1, 3, 5, 2, 4, 8, 6, 7), (1, 3, 5, 7, 4, 6, 2, 8), (1, 3, 8, 2, 6, 4, 5, 7), (1, 6, 8, 4, 2, 3, 5, 7)\}, \\ 
(15) & d_{4,15}^+ &=&  \{(1, 3, 5, 2, 6, 7, 4, 8), (1, 3, 8, 4, 5, 2, 6, 7), (1, 6, 2, 3, 5, 7, 4, 8), (1, 6, 2, 3, 8, 4, 5, 7)\}, \\ 
(16) & d_{4,16}^+ &=&  \{(1, 3, 5, 2, 6, 8, 4, 7), (1, 3, 5, 7, 4, 8, 2, 6), (1, 3, 8, 4, 6, 2, 5, 7), (1, 6, 2, 4, 8, 3, 5, 7)\}, \\ 
(17) & d_{4,10}^- &=&  \{(1, 3, 5, 2, 7, 6, 8, 4), (1, 3, 8, 5, 4, 6, 2, 7), (1, 6, 3, 2, 4, 8, 5, 7), (1, 8, 2, 6, 3, 5, 7, 4)\}, \\ 
(18) & d_{4,17}^+ &=&  \{(1, 3, 5, 2, 8, 4, 6, 7), (1, 3, 5, 7, 4, 2, 6, 8), (1, 3, 8, 6, 2, 4, 5, 7), (1, 6, 4, 8, 2, 3, 5, 7)\}, \\ 
(19) & d_{4,18}^+ &=&  \{(1, 3, 5, 2, 8, 6, 4, 7), (1, 3, 5, 7, 4, 2, 8, 6), (1, 3, 8, 6, 4, 2, 5, 7), (1, 6, 4, 2, 8, 3, 5, 7)\}, \\ 
(20) & d_{4,  7}^- &=&  \{(1, 3, 5, 2, 8, 6, 7, 4), (1, 3, 8, 6, 4, 5, 2, 7), (1, 6, 3, 5, 7, 4, 2, 8), (1, 6, 4, 2, 3, 8, 5, 7)\}, \\ 
(21) & d_{4,17}^- &=&  \{(1, 3, 5, 7, 2, 4, 8, 6), (1, 3, 5, 8, 2, 6, 4, 7), (1, 3, 6, 8, 4, 2, 5, 7), (1, 4, 6, 2, 8, 3, 5, 7)\}, \\
(22) & d_{4, 9}^- &=&  \{(1, 3, 5, 7, 2, 6, 8, 4), (1, 3, 5, 8, 4, 6, 2, 7), (1, 3, 6, 2, 4, 8, 5, 7), (1, 4, 8, 2, 6, 3, 5, 7)\}, \\ 
(23) & d_{4,14}^- &=&  \{(1, 3, 5, 7, 2, 8, 4, 6), (1, 3, 5, 8, 6, 2, 4, 7), (1, 3, 6, 4, 8, 2, 5, 7), (1, 4, 2, 6, 8, 3, 5, 7)\}, \\ 
(24) & d_{4,12}^- &=&  \{(1, 3, 5, 7, 2, 8, 6, 4), (1, 3, 5, 8, 6, 4, 2, 7), (1, 3, 6, 4, 2, 8, 5, 7), (1, 4, 2, 8, 6, 3, 5, 7)\}, \\ 
(25) & d_{4,13}^- &=&  \{(1, 3, 5, 8, 2, 7, 4, 6), (1, 3, 6, 8, 5, 2, 4, 7), (1, 4, 6, 3, 8, 2, 5, 7), (1, 6, 8, 3, 5, 7, 2, 4)\}, \\ 
(26) & d_{4,15}^- &=&  \{(1, 3, 5, 8, 4, 7, 2, 6), (1, 3, 6, 2, 5, 8, 4, 7), (1, 4, 8, 3, 5, 7, 2, 6), (1, 4, 8, 3, 6, 2, 5, 7)\}, \\ 
(27) & d_{4, 4}^- &=&  \{(1, 3, 5, 8, 6, 2, 7, 4), (1, 3, 6, 4, 8, 5, 2, 7), (1, 4, 2, 6, 3, 8, 5, 7), (1, 6, 3, 5, 7, 2, 8, 4)\}, \\ 
(28) & d_{4,19}^+ &=&  \{(1, 3, 6, 2, 5, 7, 4, 8), (1, 3, 8, 4, 5, 7, 2, 6), (1, 4, 8, 3, 5, 2, 6, 7), (1, 6, 2, 3, 5, 8, 4, 7)\}, \\ 
(29) & d_{4, 6}^-  &=&  \{(1, 3, 6, 2, 8, 5, 7, 4), (1, 3, 8, 5, 7, 2, 6, 4), (1, 4, 8, 6, 3, 5, 2, 7), (1, 6, 3, 5, 8, 4, 2, 7)\}, \\ 
(30) & d_{4, 5}^-  &=& \{(1, 3, 6, 8, 2, 5, 7, 4), (1, 3, 8, 5, 7, 2, 4, 6), (1, 4, 6, 8, 3, 5, 2, 7), (1, 6, 3, 5, 8, 2, 4, 7)\}, \\ 
(31) & d_{4, 3}^-  &=&  \{(1, 3, 6, 8, 5, 2, 7, 4), (1, 4, 6, 3, 8, 5, 2, 7), (1, 6, 3, 5, 8, 2, 7, 4), (1, 6, 3, 8, 5, 7, 2, 4)\}, \\ 
(32) & d_{4, 1}^-   &=&  \{(1, 3, 6, 8, 5, 7, 2, 4), (1, 3, 6, 8, 5, 7, 2, 4), (1, 4, 6, 3, 5, 8, 2, 7), (1, 4, 6, 3, 5, 8, 2, 7)\}. 
\end{array}
\end{equation*}

For each class listed above, the first element is the left canonical representative.  Thus we also have a complete list of ${\bf Gauss}^{\rm min, LC}(4)$.  

Now we consider unoriented equivalence classes.  

First we translate the map 
${\rm rev}^{\rm LP}: {\bf Gauss}^{\rm min, LP}(4) \to {\bf Gauss}^{\rm min, LP}(4)$ to 
a map 
$${\rm rev}^{\rm LP}: {\bf GLPmin} \to {\bf GLPmin}$$ as follows.  
Let $\pi$ be a permutation of $\{ 1, \dots, 8\}$ with 
 \begin{eqnarray*}
 \pi(1)=7, \quad \pi(2)= 8, \quad \pi(3)= 5, \quad \pi(4)= 6, \quad \pi(5)= 3, \quad \pi(6)= 4, 
 \quad \pi(7)= 1, \quad \pi(8)= 2.  
  \end{eqnarray*}
Recall that, in general, let $\pi$ be a permutation of $\{ 1, \dots, 2n\}$ with 
 \begin{eqnarray*}
\pi(k) = 2n-k \quad \mbox{(for odd $k$)} \quad \mbox{and} \quad \pi(k) = 2n+2  -k \quad 
\mbox{(for even $k$)}.  
 \end{eqnarray*}
For an element $w= (m_1, m_2, \dots, m_8)$,  let 
 \begin{eqnarray*}
 \pi_\ast({\rm rev}(w))= 
\pi_\ast(m_8, \dots, m_2, m_1) = (\pi(m_8), \dots, \pi(m_2), \pi(m_1)).   
\end{eqnarray*}
It is weakly left preferred, i.e., the subsequence consisting of odd numbers is $(1,3,5,7)$. 
Apply a rotation to it, and we have a left preferred element.  This is 
${\rm rev}^{\rm LP}(w)$.   

For example, for $w= (1, 3, 2, 4, 5, 7, 6, 8)$, 
 \begin{eqnarray*}
 \pi_\ast({\rm rev}(1, 3, 2, 4, 5, 7, 6, 8) ) = 
\pi_\ast(8, 6, 7, 5, 4, 2, 3, 1) = (2, 4, 1, 3, 6, 8, 5, 7),    
\end{eqnarray*}
and 
${\rm rev}^{\rm LP}(1, 3, 2, 4, 5, 7, 6, 8) = (1, 3, 6, 8, 5, 7, 2, 4)$.    

This implies that $(1, 3, 2, 4, 5, 7, 6, 8)$ is unorientedly equivalent to $(1, 3, 6, 8, 5, 7, 2, 4)$.  
Therefore the unoriented equivalence class $[(1, 3, 2, 4, 5, 7, 6, 8)]_{\rm unori}$ 
of $(1, 3, 2, 4, 5, 7, 6, 8)$ is the union of 
$[(1, 3, 2, 4, 5, 7, 6, 8)]_{\rm ori}$ and $[(1, 3, 6, 8, 5, 7, 2, 4)]_{\rm ori}$.  

Let ${\bf GLPminUnori}$ be the set of unoriented equivalence classes of elements of ${\bf GLPmin}$. 
It consists of $19$ elements,  
 \begin{eqnarray*}
d_{4,  1} & =~ [ (1, 3, 2, 4, 5, 7, 6, 8)]_{\rm unori} 
& =~ [ (1, 3, 2, 4, 5, 7, 6, 8)]_{\rm ori} \cup [ (1, 3, 6, 8, 5, 7, 2, 4)]_{\rm ori}, 
\\ 
d_{4,  2} & =~ [ (1, 3, 2, 4, 6, 8, 5, 7)]_{\rm unori} 
& =~ [ (1, 3, 2, 4, 6, 8, 5, 7)]_{\rm ori}, 
\\ 
d_{4,  3} & =~ [ (1, 3, 2, 5, 4, 7, 6, 8)]_{\rm unori} 
& =~ [ (1, 3, 2, 5, 4, 7, 6, 8)]_{\rm ori} \cup [ (1, 3, 6, 8, 5, 2, 7, 4)]_{\rm ori}, 
\\ 
d_{4,  4} & =~ [ (1, 3, 2, 5, 4, 8, 6, 7)]_{\rm unori} 
& =~ [ (1, 3, 2, 5, 4, 8, 6, 7)]_{\rm ori} \cup [ (1, 3, 5, 8, 6, 2, 7, 4)]_{\rm ori}, 
\\  
d_{4,  5} & =~ [ (1, 3, 2, 5, 7, 4, 6, 8)]_{\rm unori} 
& =~ [ (1, 3, 2, 5, 7, 4, 6, 8)]_{\rm ori} \cup [ (1, 3, 6, 8, 2, 5, 7, 4)]_{\rm ori}, 
\\  
d_{4,  6} & =~ [ (1, 3, 2, 5, 7, 6, 4, 8)]_{\rm unori} 
& =~ [ (1, 3, 2, 5, 7, 6, 4, 8)]_{\rm ori} \cup [ (1, 3, 6, 2, 8, 5, 7, 4)]_{\rm ori}, 
\\  
d_{4,  7} & =~ [ (1, 3, 2, 5, 8, 6, 4, 7)]_{\rm unori} 
& =~ [ (1, 3, 2, 5, 8, 6, 4, 7)]_{\rm ori} \cup [ (1, 3, 5, 2, 8, 6, 7, 4)]_{\rm ori}, 
\\  
d_{4,  8} & =~ [ (1, 3, 2, 6, 4, 8, 5, 7)]_{\rm unori} 
& =~ [ (1, 3, 2, 6, 4, 8, 5, 7)]_{\rm ori}, 
\\ 
d_{4,  9} & =~ [ (1, 3, 2, 6, 8, 4, 5, 7)]_{\rm unori} 
& =~ [ (1, 3, 2, 6, 8, 4, 5, 7)]_{\rm ori} \cup [ (1, 3, 5, 7, 2, 6, 8, 4)]_{\rm ori}, 
\\  
d_{4,  10} & =~ [ (1, 3, 2, 6, 8, 5, 4, 7)]_{\rm unori} 
& =~ [ (1, 3, 2, 6, 8, 5, 4, 7)]_{\rm ori} \cup [ (1, 3, 5, 2, 7, 6, 8, 4)]_{\rm ori}, 
\\  
d_{4,  11} & =~ [ (1, 3, 2, 8, 5, 7, 4, 6)]_{\rm unori} 
& =~ [ (1, 3, 2, 8, 5, 7, 4, 6)]_{\rm ori}, 
\\  
d_{4,  12} & =~ [ (1, 3, 2, 8, 6, 4, 5, 7)]_{\rm unori} 
& =~ [ (1, 3, 2, 8, 6, 4, 5, 7)]_{\rm ori} \cup [ (1, 3, 5, 7, 2, 8, 6, 4)]_{\rm ori}, 
\\  
d_{4,  13} & =~ [ (1, 3, 5, 2, 4, 7, 6, 8)]_{\rm unori} 
& =~ [ (1, 3, 5, 2, 4, 7, 6, 8)]_{\rm ori} \cup [ (1, 3, 5, 8, 2, 7, 4, 6)]_{\rm ori}, 
\\  
d_{4,  14} & =~ [ (1, 3, 5, 2, 4, 8, 6, 7)]_{\rm unori} 
& =~ [ (1, 3, 5, 2, 4, 8, 6, 7)]_{\rm ori} \cup [ (1, 3, 5, 7, 2, 8, 4, 6)]_{\rm ori}, 
\\   
d_{4,  15} & =~ [ (1, 3, 5, 2, 6, 7, 4, 8)]_{\rm unori} 
& =~ [ (1, 3, 5, 2, 6, 7, 4, 8)]_{\rm ori} \cup [ (1, 3, 5, 8, 4, 7, 2, 6)]_{\rm ori}, 
\\ 
d_{4,  16} & =~ [ (1, 3, 5, 2, 6, 8, 4, 7)]_{\rm unori} 
& =~ [ (1, 3, 5, 2, 6, 8, 4, 7)]_{\rm ori}, 
\\ 
d_{4,  17} & =~ [ (1, 3, 5, 2, 8, 4, 6, 7)]_{\rm unori} 
& =~ [ (1, 3, 5, 2, 8, 4, 6, 7)]_{\rm ori} \cup [ (1, 3, 5, 7, 2, 4, 8, 6)]_{\rm ori}, 
\\ 
d_{4,  18} & =~ [ (1, 3, 5, 2, 8, 6, 4, 7)]_{\rm unori} 
& =~ [ (1, 3, 5, 2, 8, 6, 4, 7)]_{\rm ori}, 
\\ 
d_{4,  19} & =~ [ (1, 3, 6, 2, 5, 7, 4, 8)]_{\rm unori} 
& =~ [ (1, 3, 6, 2, 5, 7, 4, 8)]_{\rm ori}.  
  \end{eqnarray*}

For each unoriented equivalence class listed above, the representative given there is the left canonical representative. Thus we also have a list of ${\bf Gauss}^{\rm min, LC}_{\rm rev}(4)$.  

Therefore we see that ${\bf Gauss}^{\rm min, LP}_{\rm ori}(4)$ consists of $32$ elements 
and the set  ${\bf Gauss}^{\rm min, LP}_{\rm unori}(4)$ consists of  $19$ elements. 
}\end{example} 

\section{Left canonical Gauss codes and orientations} \label{codes_orientations}

In this section we first summarize the results we have seen for virtual diagrams and virtual doodles. 
Then we discuss canonical orientations for unoriented virtual diagrams and unoriented virtual doodles.

Let $D$ be a virtual diagram with $n$ $(>0)$ real crossings.  
Let $w_0$ be a Gauss code of $D$ with respect to a base semiarc and a labeling of real crossings.  
Let $w = {\rm proj}^{\rm LP}(w_0)$, which is a left preferred Gauss code presenting $D$, 
where 
${\rm proj}^{\rm LP}: {\bf Gauss}(n) \to {\bf Gauss}^{\rm {LP}}(n)$ is a map defined in Section~\ref{section:leftpreferred}.  
Then $w$ is also a Gauss code of $D$. 

The oriented equivalence class of $w$ as left preferred Gauss codes is 
\begin{equation}
[w]^{\rm LP}_{\rm ori} =   
\{  
w, ~ 
{\rm shift^{LP}}(w), ~ 
({\rm shift^{LP}})^2(w), ~ 
\dots, 
({\rm shift^{LP}})^{n-1}(w)  
\}
\end{equation}
and the unoriented equivalence class of $w$ as left preferred Gauss codes is 
\begin{equation}
\begin{array}{ccl}
[w]^{\rm LP}_{\rm unori}   
  & = & [w]^{\rm LP}_{\rm ori} \cup [w']^{\rm LP}_{\rm ori}  \\
  & = &  
\{
w,~ {\rm shift^{LP}}(w),~ ({\rm shift^{LP}})^2(w),~  \dots,~ ({\rm shift^{LP}})^{n-1}(w)
\}
 \\
  &   &   \cup 
\{
w',~ {\rm shift^{LP}}(w'),~ ({\rm shift^{LP}})^2(w'),~  \dots,~ ({\rm shift^{LP}})^{n-1}(w')
\}, 
\end{array}
\end{equation}
where $w' = {\rm rev^{LP}}(w) = {\rm proj^{LP}}( {\rm rev}(w) )$.

\begin{definition}{\rm 
The {\it oriented left canonical Gauss code} of $D$, denoted by $G_{\rm ori}(D)$, 
is the smallest element of $[w]^{\rm LP}_{\rm ori}$, i.e.,  $G_{\rm ori}(D) = {\rm proj^{LC}}(w)$.  

The  {\it unoriented left canonical Gauss code} of $D$, denoted by $G_{\rm unori}(D)$, is 
the smallest element of $[w]^{\rm LP}_{\rm unori}$, i.e., it is the smaller one between 
${\rm proj^{LC}}(w)$ and ${\rm proj^{LC}}({\rm rev}(w))$.
}\end{definition} 

Theorem~\ref{thm:DGL} may be restated as follows.

\begin{theorem} \label{theorem:canonicalA}
Let $D$ and $D'$ be virtual diagrams with $n$ real crossings.  
\begin{itemize}
\item[(1)] 
$D$ is strictly equivalent to $D'$  if and only if 
$G_{\rm ori}(D)=G_{\rm ori}(D')$.  
\item[(2)] 
$D$ is strictly equivalent to $D'$ or ${\rm rev}(D')$  if and only if 
$G_{\rm unori}(D)=G_{\rm unori}(D')$.  
\end{itemize}
\end{theorem}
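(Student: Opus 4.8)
The plan is to derive Theorem~\ref{theorem:canonicalA} directly from the machinery already assembled, treating it essentially as a translation of Theorem~\ref{thm:DGL} into the language of the maps $G_{\rm ori}$ and $G_{\rm unori}$. The key observation is that $G_{\rm ori}(D)$ and $G_{\rm unori}(D)$ do not depend on the auxiliary choices made in their construction, and that equality of these invariants is precisely the condition appearing in the bijections of Theorem~\ref{thm:DGL}.

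First I would establish well-definedness. Starting from a Gauss code $w_0$ of $D$ associated with some base semiarc and labeling, Lemma~\ref{lemmaAA} shows that any other choice replaces $w_0$ by an orientedly equivalent Gauss code in the sense of Definition~\ref{definition:equivGauss}. By the only-if part of Theorem~\ref{theorem:leftA}, the left preferred codes ${\rm proj^{LP}}(w_0)$ obtained from different choices lie in the same orbit under ${\rm shift^{LP}}$, hence have the same left canonical representative ${\rm proj^{LC}}(w_0)$. Thus $G_{\rm ori}(D)={\rm proj^{LC}}(w_0)$ is independent of the choices, and similarly $G_{\rm unori}(D)$, being the smaller of ${\rm proj^{LC}}(w_0)$ and ${\rm proj^{LC}}({\rm rev}(w_0))$, is well-defined; reversing the orientation of $D$ swaps these two by Lemma~\ref{lemmaAA}(3), so $G_{\rm unori}(D)=G_{\rm unori}({\rm rev}(D))$.

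Next I would prove part (1). If $D$ is strictly equivalent to $D'$, then, fixing compatible base semiarcs and labelings, Proposition~\ref{propAA} gives them the same Gauss code, so $G_{\rm ori}(D)=G_{\rm ori}(D')$. Conversely, suppose $G_{\rm ori}(D)=G_{\rm ori}(D')$. Let $w_0,w_0'$ be Gauss codes of $D,D'$; then ${\rm proj^{LC}}(w_0)={\rm proj^{LC}}(w_0')$, so ${\rm proj^{LP}}(w_0)$ and ${\rm proj^{LP}}(w_0')$ lie in the same ${\rm shift^{LP}}$-orbit, hence are orientedly equivalent as Gauss codes (the Corollary after Theorem~\ref{theorem:leftA}); since $w_0$ is orientedly equivalent to ${\rm proj^{LP}}(w_0)$ and likewise for $w_0'$, we get that $w_0$ and $w_0'$ are orientedly equivalent Gauss codes. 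By Proposition~\ref{propAB}, $D$ and $D'$ represent the same class in ${\bf Diagram}_{\rm strict}(n)$, i.e., they are strictly equivalent. Part (2) follows the same pattern, using unoriented equivalence of Gauss codes and the second bijection of Proposition~\ref{propAB}: $G_{\rm unori}(D)=G_{\rm unori}(D')$ is equivalent to ${\rm proj^{LP}}(w_0)$ and ${\rm proj^{LP}}(w_0')$ being unorientedly equivalent as left preferred Gauss codes, which by Theorem~\ref{theorem:leftA} is equivalent to $w_0$ and $w_0'$ being unorientedly equivalent, which by Proposition~\ref{propAB} means $D$ is strictly equivalent to $D'$ or ${\rm rev}(D')$.

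I do not expect a genuine obstacle here, since every ingredient is already in place; the only mild subtlety is bookkeeping the equivalence "as Gauss codes" versus "as left preferred Gauss codes" correctly, but this is exactly what Theorem~\ref{theorem:leftA} and its Corollary handle. The proof is therefore a matter of stringing together Proposition~\ref{propAA}, Lemma~\ref{lemmaAA}, Theorem~\ref{theorem:leftA}, Proposition~\ref{propAB}, and the definition of ${\rm proj^{LC}}$ in a single paragraph for each of the two statements.
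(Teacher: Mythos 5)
Your proposal is correct and follows essentially the same route as the paper, which treats Theorem~\ref{theorem:canonicalA} as a restatement of Theorem~\ref{thm:DGL} and verifies it via the chain of equivalences (strict equivalence of diagrams) $\Leftrightarrow$ (oriented/unoriented equivalence of Gauss codes) $\Leftrightarrow$ (equivalence as left preferred codes) $\Leftrightarrow$ (equality of left canonical representatives), citing Proposition~\ref{propAB} and Theorem~\ref{theorem:leftA} exactly as you do. Your added remarks on well-definedness of $G_{\rm ori}$ and $G_{\rm unori}$ via Lemma~\ref{lemmaAA} are a harmless elaboration of the same argument.
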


More precisely, we have seen the following. 

Let $w_0$ and $w'_0$ be Gauss codes presenting $D$ and $D'$, respectively,  and let 
$w = {\rm proj}^{\rm LP}(w_0)$ and $w'= {\rm proj}^{\rm LP}(w'_0)$.  
The following conditions are mutually equivalent. 
\begin{itemize}
\item[(i)]  $D$ is strictly equivalent to $D'$. 
\item[(ii)]  $w_0$ and $w'_0$ are orientedly equivalent as Gauss codes.
\item[(iii)] $w$ and $w'$ are orientedly equivalent as left preferred Gauss codes. 
\item[(iv)] $[w]^{\rm LP}_{\rm ori} = [w']^{\rm LP}_{\rm ori}$.  
\item[(v)] $G_{\rm ori}(D)=G_{\rm ori}(D')$. 
\end{itemize} 

By Proposition~\ref{propAB}, we have (i) $\Leftrightarrow$ (ii).  
By Theorem~\ref{theorem:leftA} (or Theorem~\ref{theorem:leftB}), we have (ii) $\Leftrightarrow$ (iii).  
(iii) $\Leftrightarrow$ (iv) $\Leftrightarrow$ (v) are obvious. 
Similarly we see that the 
following conditions are mutually equivalent. 
\begin{itemize}
\item[(i)]  $D$ is strictly equivalent to $D'$ or ${\rm rev}(D')$. 
\item[(ii)]  $w_0$ and $w'_0$ are unorientedly equivalent as Gauss codes.
\item[(iii)] $w$ and $w'$ are unorientedly equivalent as left preferred Gauss codes. 
\item[(iv)] $[w]^{\rm LP}_{\rm unori} = [w']^{\rm LP}_{\rm unori}$.  
\item[(v)] $G_{\rm unori}(D)=G_{\rm unori}(D')$. 
\end{itemize} 

Theorem~\ref{thm:DDG} may be restated as follows.

\begin{theorem} \label{theorem:canonicalB}
Let $D$ and $D'$ be minimal virtual diagrams with $n$ real crossings.  
\begin{itemize}
\item[(1)] 
$D$ is orientedly equivalent to $D'$  if and only if 
$G_{\rm ori}(D)=G_{\rm ori}(D')$.  
\item[(2)] 
$D$ is unorientedly equivalent to $D'$  if and only if 
$G_{\rm unori}(D)=G_{\rm unori}(D')$.  
\end{itemize}
\end{theorem}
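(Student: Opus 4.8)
The plan is to derive Theorem~\ref{theorem:canonicalB} from two results already in hand: the uniqueness of minimal representatives, Theorem~\ref{thm0}, and the Gauss-code characterisation of strict equivalence, Theorem~\ref{theorem:canonicalA}. The key observation is that, restricted to minimal diagrams with a fixed number $n$ of real crossings, oriented doodle equivalence collapses to strict equivalence and unoriented doodle equivalence collapses to strict equivalence combined with orientation reversal; once this is established, Theorem~\ref{theorem:canonicalA} applies verbatim.

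For part (1), I would show that for minimal virtual diagrams with $n$ real crossings, being orientedly doodle-equivalent is the same as being strictly equivalent. Strict equivalence is generated by detour moves, which are among the moves defining oriented doodle equivalence, so strict equivalence implies oriented doodle equivalence. Conversely, if two such minimal diagrams $D$ and $D'$ are orientedly doodle-equivalent, they represent the same oriented virtual doodle, which by Theorem~\ref{thm0} has a unique minimal representative up to strict equivalence; since $D$ and $D'$ are both minimal representatives of it, they are strictly equivalent. Part (1) now follows at once from Theorem~\ref{theorem:canonicalA}(1), which characterises strict equivalence by equality of oriented left canonical Gauss codes.

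Part (2) is proved the same way. First, ${\rm rev}(D')$ is again a minimal virtual diagram with $n$ real crossings, since minimality of a diagram --- the impossibility of applying $R_1$ or $R_2$ up to strict equivalence --- is manifestly preserved by reversing the orientation. If $D$ and $D'$ are minimal with $n$ real crossings and unorientedly doodle-equivalent, they represent the same unoriented virtual doodle, so by Theorem~\ref{thm0}, whose uniqueness statement is up to strict equivalence and reversing orientations, $D$ is strictly equivalent to $D'$ or to ${\rm rev}(D')$. Theorem~\ref{theorem:canonicalA}(2) then gives the equivalence with $G_{\rm unori}(D) = G_{\rm unori}(D')$; here one also uses $G_{\rm unori}(D) = G_{\rm unori}({\rm rev}(D))$, which is the special case $D' = {\rm rev}(D)$ of Theorem~\ref{theorem:canonicalA}(2).

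Thus the proof is a chaining of equivalences, and the one step that genuinely carries weight --- the point on which the whole statement rests --- is the reduction to strict equivalence, i.e.\ the appeal to Theorem~\ref{thm0}: one must know that a \emph{minimal} diagram is \emph{the} unique minimal representative of its (un)oriented virtual doodle, and not merely one representative among possibly several with the same crossing number. Granting Theorem~\ref{thm0} (established in \cite{BFKK}), everything else is bookkeeping with the maps ${\rm proj^{LP}}$, ${\rm proj^{LC}}$, ${\rm shift^{LP}}$ and ${\rm rev^{LP}}$ as set up in Sections~\ref{section:leftpreferred}--\ref{sect:leftcanonical}.
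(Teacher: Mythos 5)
Your proof is correct and follows essentially the same route as the paper: Theorem~\ref{theorem:canonicalB} is there presented as a restatement of Theorem~\ref{thm:DDG}, which is obtained precisely by combining Theorem~\ref{thm0} (uniqueness of the minimal representative up to strict equivalence, resp.\ strict equivalence and reversal) with the left canonical Gauss code characterisation of strict equivalence (Theorem~\ref{thm:DGL}/Theorem~\ref{theorem:canonicalA}, restricted to minimal diagrams as in Theorem~\ref{thm:DGLmin}). Your identification of the appeal to Theorem~\ref{thm0} as the step carrying the real weight matches the paper's logic exactly.
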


\begin{definition}{\rm 
Let $K$ be an oriented (or unoriented) virtual doodle.  The  
{\it left canonical Gauss code} of $K$, denoted by $G_{\rm ori}(K)$ (or $G_{\rm unori}(K)$), 
is $G_{\rm ori}(D)$ (or $G_{\rm unori}(D)$) 
for a minimal virtual diagram $D$ representing $K$.  
}\end{definition}  

Then Theorem~\ref{thm:DDG} may be restated as follows. 

\begin{theorem}
Let $K$ and $K'$ be oriented (or unoriented) virtual doodles.  
$K = K'$  if and only if 
$G_{\rm ori}(K)=G_{\rm ori}(K')$   
(or $G_{\rm unori}(K)=G_{\rm unori}(K')$).  
\end{theorem}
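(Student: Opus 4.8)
The plan is to reduce this statement entirely to the already-established machinery, since at this point almost all the work has been done. The final theorem asserts that two oriented (or unoriented) virtual doodles $K$ and $K'$ are equal if and only if their left canonical Gauss codes agree. First I would unwind the definition: by Theorem~\ref{thm0} each oriented (resp.\ unoriented) virtual doodle has a minimal representative, unique up to strict equivalence (resp.\ strict equivalence and reversing orientation), and by definition $G_{\rm ori}(K)$ (resp.\ $G_{\rm unori}(K)$) is computed from any such minimal representative. So the first step is to check that $G_{\rm ori}$ (resp.\ $G_{\rm unori}$) is well defined on doodles, i.e.\ that it does not depend on the choice of minimal representative: if $D$ and $D'$ are both minimal representatives of $K$, then $D$ and $D'$ are strictly equivalent (resp.\ strictly equivalent or $D \sim {\rm rev}(D')$), and Theorem~\ref{theorem:canonicalA} gives $G_{\rm ori}(D) = G_{\rm ori}(D')$ (resp.\ $G_{\rm unori}(D) = G_{\rm unori}(D')$).

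Second, I would prove the forward direction. If $K = K'$, choose a single minimal virtual diagram $D$ representing this common doodle; then $G_{\rm ori}(K) = G_{\rm ori}(D) = G_{\rm ori}(K')$ by well-definedness, and similarly in the unoriented case. This is immediate once the previous step is in place.

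Third, the converse. Suppose $G_{\rm ori}(K) = G_{\rm ori}(K')$. Pick minimal representatives $D$ of $K$ and $D'$ of $K'$; then $D$ and $D'$ both have the same number of real crossings $n$ (the length of the common Gauss code), so Theorem~\ref{theorem:canonicalB}(1) applies and tells us $D$ is orientedly equivalent to $D'$, hence $K = K'$. The unoriented case uses Theorem~\ref{theorem:canonicalB}(2) in the same way. One subtlety to address explicitly: both minimal diagrams realize the minimum real crossing number of their doodle, and since $K$ and $K'$ share the same canonical code they share $n$; I would note that Lemma~\ref{lemmaC} guarantees a minimal virtual diagram has a minimal Gauss code, and the canonical code's length records $n$, so $D, D' \in {\bf Diagram}^{\rm min}_{\rm strict}(n)$ and Theorem~\ref{theorem:canonicalB} is applicable with no mismatch.

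The main obstacle, such as it is, is purely bookkeeping rather than mathematical: one must be careful that ``$G_{\rm ori}(K)$'' is a genuine function on doodles before chaining the equivalences, and that the hypothesis $G_{\rm ori}(K) = G_{\rm ori}(K')$ really does force the crossing numbers to coincide so that Theorem~\ref{theorem:canonicalB} is in scope (it is stated for diagrams with the same $n$). There is no hard analysis here; the theorem is a formal corollary of the bijection $(\ref{eqn:DDGori})$ (resp.\ $(\ref{eqn:DDGunori})$), and the cleanest write-up simply observes that $K \mapsto G_{\rm ori}(K)$ is precisely the composite bijection ${\bf Doodle}_{\rm ori}(n) \to {\bf Diagram}^{\rm min}_{\rm strict}(n) \to {\bf Gauss}^{\rm min, LC}(n)$ of Theorem~\ref{thm:DDG}, and a bijection separates points.
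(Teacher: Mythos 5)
Your proposal is correct and follows essentially the same route as the paper, which states this result as a direct restatement of Theorem~\ref{thm:DDG} (equivalently, of Theorems~\ref{thm0}, \ref{theorem:canonicalA} and \ref{theorem:canonicalB}) without further argument. Your added checks---well-definedness of $G_{\rm ori}$, $G_{\rm unori}$ on doodles via the uniqueness of minimal representatives, and that equality of canonical codes forces equal crossing numbers so Theorem~\ref{theorem:canonicalB} applies---are exactly the bookkeeping implicit in the paper's reduction.
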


Now we discuss canonical orientations for unoriented virtual diagrams and unoriented virtual doodles. 

Let $D$ be a virtual diagram and let $|D|$ be the unoriented virtual diagram by forgetting the orientation. 
Let $G_{\rm ori}(D)$ and $G_{\rm ori}({\rm rev}(D))$ be left canonical Gauss codes of $D$ and ${\rm rev}(D)$.  
If $G_{\rm ori}(D) \leq G_{\rm ori}({\rm rev}(D))$ then we say that $D$ has a {\it left canonical orientation} for $|D|$.  
When $G_{\rm ori}(D) \neq  G_{\rm ori}({\rm rev}(D))$, there is a unique left canonical orientation for $|D|$.  
When $G_{\rm ori}(D) = G_{\rm ori}({\rm rev}(D))$, there are two left canonical orientations for $|D|$.

Let $|K|$ be an unoriented virtual doodle.  Let $D$ be a minimal virtual diagram representing $|K|$.  
If $D$ has a left canonical orientation for $|D|$, then we  
say that the oriented virtual doodle $K$ represented by $D$ has a {\it left canonical  orientation} for $|K|$.  

\begin{proposition}
For any unoriented virtual doodle, there is a unique left canonical orientation.  
\end{proposition}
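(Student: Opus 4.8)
The plan is to derive the proposition from Theorem~\ref{thm0} (uniqueness of the minimal representative up to strict equivalence and reversal) together with Theorem~\ref{theorem:canonicalA}, exploiting that ${\bf Gauss}^{\rm LC}(n)$ is totally ordered by the lexicographic order: of two comparable Gauss codes exactly one is the smaller, or they coincide, and this is what selects the orientation. First I would record two facts about reversal. If $D$ is a virtual diagram, then ${\rm rev}(D)$ has the same real crossings, and ${\rm rev}(D)$ is minimal whenever $D$ is: by Lemma~\ref{lemmaC} minimality of $D$ is equivalent to minimality of a Gauss code $w$ of $D$, a Gauss code of ${\rm rev}(D)$ is ${\rm rev}(w)$ by Lemma~\ref{lemmaAA}(3), and the $1$- and $2$-reducibility conditions are visibly invariant under the operation ${\rm rev}$ on sequences. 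Likewise, if $D$ represents an oriented virtual doodle $K$, then ${\rm rev}(D)$ represents ${\rm rev}(K)$, since reversing orientations conjugates a sequence of $R_1$, $R_2$, detour moves and isotopies into another such sequence. Consequently, if $D$ and $D'$ are minimal virtual diagrams both representing the same oriented virtual doodle $K$, then $D$ and $D'$ are strictly equivalent by Theorem~\ref{thm0}, hence so are ${\rm rev}(D)$ and ${\rm rev}(D')$, so $G_{\rm ori}(D)=G_{\rm ori}(D')$ and $G_{\rm ori}({\rm rev}(D))=G_{\rm ori}({\rm rev}(D'))$ by Theorem~\ref{theorem:canonicalA}(1). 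Thus the inequality $G_{\rm ori}(D)\le G_{\rm ori}({\rm rev}(D))$ defining a left canonical orientation depends only on $K$, so the predicate ``the oriented virtual doodle $K$ lying over $|K|$ has a left canonical orientation for $|K|$'' is well defined.

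For existence, fix a minimal virtual diagram $D$ representing $|K|$ (Theorem~\ref{thm0}), with some orientation, and let $K_0$ be the oriented virtual doodle it represents, so $|K_0|=|K|$. Since $G_{\rm ori}(D)$ and $G_{\rm ori}({\rm rev}(D))$ lie in the totally ordered set ${\bf Gauss}^{\rm LC}(n)$, either $G_{\rm ori}(D)\le G_{\rm ori}({\rm rev}(D))$, in which case $D$ has a left canonical orientation for $|D|$ and $K_0$ has a left canonical orientation for $|K|$; or $G_{\rm ori}({\rm rev}(D))\le G_{\rm ori}(D)$, in which case the minimal diagram ${\rm rev}(D)$, which represents ${\rm rev}(K_0)$, has a left canonical orientation for $|{\rm rev}(D)|=|K|$. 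Either way some oriented virtual doodle lying over $|K|$ has a left canonical orientation.

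For uniqueness, suppose $K_1$ and $K_2$ are oriented virtual doodles over $|K|$, each with a left canonical orientation for $|K|$. Since $|K_1|=|K_2|=|K|$, either $K_2=K_1$, and we are done, or $K_2={\rm rev}(K_1)$. In the latter case choose a minimal virtual diagram $D_1$ representing $K_1$; then ${\rm rev}(D_1)$ is a minimal virtual diagram representing $K_2$. Applying the defining inequality to $D_1$ and to ${\rm rev}(D_1)$ yields $G_{\rm ori}(D_1)\le G_{\rm ori}({\rm rev}(D_1))$ and $G_{\rm ori}({\rm rev}(D_1))\le G_{\rm ori}({\rm rev}({\rm rev}(D_1)))=G_{\rm ori}(D_1)$, hence $G_{\rm ori}(D_1)=G_{\rm ori}({\rm rev}(D_1))$. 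By Theorem~\ref{theorem:canonicalA}(1), $D_1$ is strictly equivalent to ${\rm rev}(D_1)$, so $K_1=K_2$ as oriented virtual doodles. This gives uniqueness. I do not expect a serious obstacle: the one place that needs a little care, and which I would settle first, is the behaviour of minimality and of $G_{\rm ori}$ under orientation reversal; once that is in hand the argument is entirely formal, driven by the linear order on ${\bf Gauss}^{\rm LC}(n)$.
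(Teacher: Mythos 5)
Your argument is correct and follows essentially the same route as the paper: existence comes from comparing $G_{\rm ori}(D)$ with $G_{\rm ori}({\rm rev}(D))$ in the total order, and the equality case is resolved exactly as in the paper's third bullet, where Theorem~\ref{theorem:canonicalA}(1) gives $D$ strictly equivalent to ${\rm rev}(D)$ and hence $K={\rm rev}(K)$. The only difference is that you spell out the routine verifications the paper leaves implicit (minimality and $G_{\rm ori}$ behave well under reversal, and independence of the choice of minimal diagram), which is sound and does not change the approach.
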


\begin{proof}
Let $D$ be a minimal virtual diagram representing an unoriented virtual doodle $|K|$. 
Let $K$ and $K'$ be the oriented virtual doodles represented by $D$ and ${\rm rev}(D)$ respectively.   
\begin{itemize}
\item If $G_{\rm ori}(D) < G_{\rm ori}({\rm rev}(D))$, then  $K$ has a left canonical orientation for $|K|$. 
\item If $G_{\rm ori}(D) > G_{\rm ori}({\rm rev}(D))$, then  $K'$ has a left canonical orientation for $|K|$.  
\item If $G_{\rm ori}(D) = G_{\rm ori}({\rm rev}(D))$, then $K = K'$ and it has a left canonical orientation for $|K|$. 
\end{itemize}
\end{proof}

\section{Arrow diagrams} \label{section:arrow} 

We discuss arrow diagrams for virtual doodles.  The notion of arrow diagrams for virtual doodles is analogous to the notion of Gauss diagrams for virtual knots. However the methods assigning orientations on arrows (or chords) are different.  

Let $S^1$ be the unit circle in the $xy$-plane. Let $n$ is a positive integer and 
let $P_1, \dots, P_{2n}$ be points of 
$S^1$ such that $P_i = (\cos{(i \pi/n + \pi/2n)}, \sin{(i \pi/n + \pi/2n)})$ for $i \in \{1, \dots, 2n\}$.  
We assume that $P_{2n+1}$ is $P_1$.  

An {\it arrow} is an oriented chord connecting two points of $P_1, \dots, P_{2n}$. An {\it arrow diagram} (with $n$ arrows) is the circle $S^1$ with $n$ arrows  
such that every point of  $P_1, \dots, P_{2n}$ is a head or tail of an arrow. 

Let $w= x_1 x_2 \dots x_{2n}$ be a Gauss code on $n$ letters. 
For each $i \in \{1, \dots, 2n\}$,  assign the $i$-th element $x_i$  of $w$ to the point $P_i$. 
For each $j \in \{1, \dots, n\}$, there is a pair of points in $P_1, \dots, P_{2n}$ 
to which $(j,L)$ and $(j,R)$ are assigned. Connect the two points by a chord, labeled $j$, 
and give an orientation to this chord from the point assigned $(j,R)$ to the point assigned $(j,L)$. 
Then we have an arrow diagram with $n$ arrows such that the arrows are labeled with integers $1, \dots, n$. We call it the {\it arrow diagram with labeled arrows}  
of the Gauss code $w$. (We may forget the $J$-labels assigned to $P_1, \dots, P_{2n}$, since they are recovered from the labels of arrows.) 
By forgetting the labels on arrows,  we have an arrow diagram without labels, which we call 
the {\it arrow diagram} of the Gauss code $w$.

\begin{lemma}\label{lemmaDA}
Let $w$ and $w'$ be Gauss codes with $n$ letters and let $A$ and $A'$ be the 
arrow diagrams of them. 
\begin{itemize}
\item[(1)] $w' = \pi_\ast(w)$ for a permutation $\pi$ of $\{1, \dots, n\}$ if and only if $A=A'$.   
\item[(2)] $w' = {\rm shift}[m](w)$ for some integer $m$ if and only if $A'$ is obtained from $A$ by rotation by $-m \pi /n$ radian. 
\item[(3)] $w' = {\rm rev}(w)$ if and only if $A'$ is obtained from $A$ by reflection along the $x$-axis. 
\end{itemize}
\end{lemma}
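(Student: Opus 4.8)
The plan is to prove all three parts by simply unwinding the definition of the arrow diagram and recording what each of the operations $\pi_\ast$, $\mathrm{shift}[m]$, $\mathrm{rev}$ does to the data it encodes. The first step is to make explicit exactly what the (unlabeled) arrow diagram $A$ of $w = x_1 x_2 \dots x_{2n}$ remembers. By construction $P_i$ is the head of an arrow precisely when $x_i$ has second component $L$ (since each arrow runs from its $(j,R)$-endpoint to its $(j,L)$-endpoint) and is a tail precisely when $x_i$ has second component $R$; and two points $P_i, P_{i'}$ are joined by an arrow precisely when $x_i$ and $x_{i'}$ share the same first component. Conversely, reconstructing a Gauss code from $A$ amounts to choosing a bijective labeling of the $n$ arrows by $\{1,\dots,n\}$, and any two such choices yield Gauss codes differing by some $\pi_\ast$. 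Hence $A$ is a complete invariant of the orbit of $w$ under the maps $\pi_\ast$, and that orbit in turn determines $A$.

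Given this dictionary, part (1) is immediate: applying $\pi_\ast$ replaces each first component $j$ by $\pi(j)$, which alters neither the head/tail label at any $P_i$ nor the partition of $\{P_1,\dots,P_{2n}\}$ into joined pairs, so $A = A'$; conversely, if $A = A'$, read off $\pi$ by comparing the label the common arrow carries in $w$ with the label it carries in $w'$, and check $w' = \pi_\ast(w)$ directly. For (2) and (3) the key observation is that $\mathrm{shift}[m]$ and $\mathrm{rev}$ are \emph{position relabelings}: the $i$-th letter of $\mathrm{shift}[m](w)$ is the $(i+m)$-th letter of $w$ (indices taken cyclically mod $2n$), and the $i$-th letter of $\mathrm{rev}(w)$ is the $(2n+1-i)$-th letter of $w$. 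Consequently the decoration attached to $P_{i+m}$ (resp.\ $P_{2n+1-i}$) in $A$ is precisely the decoration attached to $P_i$ in $A'$, so $A'$ is the image of $A$ under the self-map of the point configuration $P_k \mapsto P_{k-m}$ (resp.\ $P_k \mapsto P_{2n+1-k}$).

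It then remains to identify these two self-maps geometrically. Since the $P_k$ are equally spaced around $S^1$ with angular gap $\pi/n$, the map $P_k \mapsto P_{k-m}$ is the rotation of $\R^2$ by $-m\pi/n$; and a direct check with the coordinates of the $P_k$ shows that the involution $P_k \mapsto P_{2n+1-k}$ is the reflection of $\R^2$ in the $x$-axis. This yields the "only if" halves of (2) and (3). For the "if" halves one runs the argument backwards: the rotation by $-m\pi/n$ (resp.\ the reflection in the $x$-axis) carries the point set to itself and, by the displayed identifications, carries the arrow diagram of $w$ to that of $\mathrm{shift}[m](w)$ (resp.\ $\mathrm{rev}(w)$); so if it also carries $A$ to $A'$, then $w'$ and $\mathrm{shift}[m](w)$ (resp.\ $\mathrm{rev}(w)$) have the same arrow diagram and hence coincide after the harmless relabeling of arrows permitted by part (1). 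I expect the only genuinely delicate step to be the trigonometric verification in (3) that $k \mapsto 2n+1-k$ is realized by the $x$-axis reflection on the given configuration $\{P_1,\dots,P_{2n}\}$ — the one place where the precise placement of the points on $S^1$ actually matters; everything else is routine bookkeeping.
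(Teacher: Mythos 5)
Your reduction of all three parts to index maps on the marked points --- $k \mapsto k$ with relabelled first components for $\pi_\ast$, $k \mapsto k-m$ for ${\rm shift}[m]$, and $k \mapsto 2n+1-k$ for ${\rm rev}$ --- is exactly the intended argument (the paper itself offers nothing beyond ``direct consequence of the definition''), and parts (1) and (2) go through as you describe. However, the one step you yourself single out as delicate, the trigonometric verification in (3), fails when actually carried out with the paper's coordinates, and since you only assert it, this is a genuine gap. With $P_k$ at angle $\theta_k = k\pi/n + \pi/(2n)$ one computes $\theta_k + \theta_{2n+1-k} \equiv 2\pi/n \pmod{2\pi}$, so the involution $P_k \mapsto P_{2n+1-k}$ induced by ${\rm rev}$ is the reflection in the line through the origin at angle $\pi/n$, \emph{not} in the $x$-axis; the $x$-axis reflection ($\theta \mapsto -\theta$) instead realizes $k \mapsto 2n-1-k$. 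For $n=2$ this is concrete: the $x$-axis reflection swaps $P_1 \leftrightarrow P_2$ and $P_3 \leftrightarrow P_4$, whereas $k \mapsto 2n+1-k$ swaps $P_1 \leftrightarrow P_4$ and $P_2 \leftrightarrow P_3$, which is reflection in the $y$-axis.

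To be fair, the fault lies in the statement rather than in your reduction: the lemma as printed would be literally correct if the points were placed at angles $i\pi/n - \pi/(2n)$ (then $\theta_k + \theta_{2n+1-k} \equiv 0$), and the discrepancy is harmless for Proposition~\ref{propDA}, since the dihedral group generated by the rotation by $\pi/n$ together with either reflection is the same. But a complete proof must either correct the axis (or the coordinates) or explicitly note that the two reflections differ by a rotation already present in the group action; simply asserting the $x$-axis check ``shows'' the claim is the step that would fail. One further small point, which you do handle correctly: an unlabelled arrow diagram cannot distinguish $w$ from $\pi_\ast(w)$, so the ``if'' directions of (2) and (3) only recover $w'$ up to composition with some $\pi_\ast$; your appeal to part (1) for the ``harmless relabeling'' is the right way to read the statement, though it means (2) and (3) should really be phrased modulo $\pi_\ast$ as well.
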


\begin{proof} 
It is a direct consequence from the definition. 
\end{proof}

Let ${\bf Arrow}(n)$ be the set of arrow diagrams with $n$ arrows. 

We say that an arrow diagram $A'$ is obtained from an arrow diagram $A$ by a {\it reflection} if 
there is a line through the origin of the $xy$-plane along which the reflection changes $A$ into $A'$.  

The dihedral group $D_{2n}$ with $4n$ elements acts on ${\bf Arrow}(n)$ by rotations around the origin of the $xy$-plane by 
$m\pi/n$ radian rotation for $m \in \{0,1, \dots, 2n-1\}$ 
and $2n$ reflections along lines through the origin. The action is generated by 
the rotation by $\pi /n$ radian and the reflection along the $x$-axis.  

Let ${\bf Arrow}_{\rm rot}(n)$ be the set of arrow diagrams with $n$ arrows modulo the rotations, and 
${\bf Arrow}_{\rm rot+ref}(n)$ the set of arrow diagrams with $n$ arrows modulo the rotations and the reflections.  Namely, ${\bf Arrow}_{\rm rot}(n)$ (or ${\bf Arrow}_{\rm rot+ref}(n)$) is the quotient of ${\bf Arrow}(n)$ by the cyclic action by the rotations (or by the whole actions of the dihedral group.) 

We have the following. 

\begin{proposition}\label{propDA}
There are bijections:  
\begin{equation}
{\bf Diagram}_{\rm strict}(n)   \longleftrightarrow  
{\bf Gauss}_{\rm ori}(n) \longleftrightarrow  
{\bf Arrow}_{\rm rot}(n)
\end{equation} 
and  
\begin{equation}
{\bf Diagram}_{\rm strict+rev}(n)  \longleftrightarrow  
{\bf Gauss}_{\rm unori}(n)  \longleftrightarrow  
{\bf Arrow}_{\rm rot+ref}(n).
\end{equation}
\end{proposition}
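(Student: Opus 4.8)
The plan is to take the first bijection in each row directly from Proposition~\ref{propAB} and to reduce everything else to the assignment $\Phi\colon{\bf Gauss}(n)\to{\bf Arrow}(n)$, $w\mapsto A(w)$, sending a Gauss code to its arrow diagram. So it suffices to produce bijections ${\bf Gauss}_{\rm ori}(n)\longleftrightarrow{\bf Arrow}_{\rm rot}(n)$ and ${\bf Gauss}_{\rm unori}(n)\longleftrightarrow{\bf Arrow}_{\rm rot+ref}(n)$, and the whole statement then follows by composition.

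First I would check that $\Phi$ is surjective. Given an arrow diagram $A$, choose any bijection between its $n$ arrows and $\{1,\dots,n\}$; for the arrow labeled $j$ assign the $J$-label $(j,R)$ to its tail and $(j,L)$ to its head. Since every point $P_i$ is a head or a tail of exactly one arrow, and since $(j,L)\neq(j,R)$ forces the two endpoints of each arrow to be distinct points, reading the $J$-labels of $P_1,\dots,P_{2n}$ in cyclic order starting at $P_1$ yields a genuine Gauss code $w$, and by construction its arrow diagram is $A$. (The map $\Phi$ is far from injective; that is exactly the content of Lemma~\ref{lemmaDA}(1).)

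Next I would use Lemma~\ref{lemmaDA} to descend $\Phi$ to the quotients. For well-definedness of $\bar\Phi\colon{\bf Gauss}_{\rm ori}(n)\to{\bf Arrow}_{\rm rot}(n)$: if $w'={\rm shift}[m]\circ\pi_\ast(w)$, then by part~(1) $A(\pi_\ast w)=A(w)$, and by part~(2) $A({\rm shift}[m](\pi_\ast w))$ is the rotation of $A(\pi_\ast w)$ by $-m\pi/n$, so $A(w')$ and $A(w)$ lie in the same rotation class. Surjectivity of $\bar\Phi$ is immediate from surjectivity of $\Phi$. For injectivity: if $A(w)$ and $A(w')$ lie in one rotation class, then $A(w')$ is the rotation of $A(w)$ by some $-m\pi/n$, which by part~(2) equals $A({\rm shift}[m](w))$; then part~(1) gives $w'=\pi_\ast({\rm shift}[m](w))={\rm shift}[m]\circ\pi_\ast(w)$ for some permutation $\pi$ (the last equality because $\pi_\ast$ acts letterwise and commutes with position shifts), so $w$ and $w'$ are orientedly equivalent. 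Thus $\bar\Phi$ is a bijection. The unoriented case is the same argument carried out for the full dihedral action: the extra generator is the reflection along the $x$-axis, which by part~(3) corresponds to ${\rm rev}$, and unoriented equivalence of Gauss codes is by definition generated by oriented equivalence together with ${\rm rev}$; since the $D_{2n}$-action on ${\bf Arrow}(n)$ is generated by the rotation by $\pi/n$ and that reflection, tracking just these two generators through Lemma~\ref{lemmaDA} yields the bijection ${\bf Gauss}_{\rm unori}(n)\longleftrightarrow{\bf Arrow}_{\rm rot+ref}(n)$. Composing with Proposition~\ref{propAB} gives both chains of bijections.

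I do not anticipate a real obstacle: the argument is pure bookkeeping on top of Lemma~\ref{lemmaDA} and Proposition~\ref{propAB}. The only points needing a sentence of care are the surjectivity of $\Phi$ — one must note that the defining properties of an arrow diagram are precisely what is needed to read back a valid Gauss code — and the observation that ${\rm shift}[m]$ and $\pi_\ast$ commute, so that the relation produced by Lemma~\ref{lemmaDA}(1) and~(2) can be rewritten in the normal form ${\rm shift}[m]\circ\pi_\ast$ appearing in Definition~\ref{definition:equivGauss}.
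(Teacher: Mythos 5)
Your argument is correct and follows exactly the paper's route: the paper proves Proposition~\ref{propDA} simply by citing Proposition~\ref{propAB} and Lemma~\ref{lemmaDA}, and your proposal is that same reduction with the bookkeeping (surjectivity of $w\mapsto A(w)$, descent to the quotients, and the normal form ${\rm shift}[m]\circ\pi_\ast$) written out explicitly.
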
 

\begin{proof} 
This is a consequence of Proposition~\ref{propAB} and Lemma~\ref{lemmaDA}.  
\end{proof}

For the points $P_1, \dots, P_{2n}$ as before, we assume that $P_{2n+1}$ is $P_1$. 

An arrow diagram is {\it $1$-reducible} if there is an arrow connecting $P_i$ and $P_{i+1}$ for some $i 
\in \{1, \dots, 2n\}$.  Otherwise, it is called {\it $1$-irreducible}. 

An arrow diagram is {\it $2$-reducible} if one of the following holds: 
\begin{itemize}
\item[(1)] There are integers $i, i' \in \{1, \dots, 2n\}$ such that 
$P_i$ and $P_i'$ are connected by an arrow and $P_{i+1}$ and $P_{i'+1}$ are 
connected by an arrow such that if $P_i$ is a head (tail) then 
$P_{i+1}$ is a tail (head).  
\item[(2)] There are integers $i, i' \in \{1, \dots, 2n\}$ such that 
$P_i$ and $P_{i'+1}$ are connected by an arrow and $P_{i+1}$ and $P_{i'}$ are 
connected by an arrow such that if $P_i$ is a head (tail) then 
$P_{i+1}$ is a tail (head).  
\end{itemize} 
Otherwise, it is called {\it $2$-irreducible}. 

An arrow diagram is called {\it minimal} or {\it irreducible} if it is $1$-irreducible and 
$2$-irreducible. 

\begin{lemma}\label{lemmaArrowC}
Let $D$, $w$ and $A$ be a virtual diagram, a Gauss code of $D$ and the arrow diagram of $w$. 
The following three conditions are mutually equivalent: (1)  
$D$ is minimal, (2) $w$ is minimal, and (3) $A$ is minimal. 
\end{lemma}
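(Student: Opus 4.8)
The plan is to prove the equivalence of the three conditions by leveraging the already-established correspondences between $D$, $w$, and $A$, and reducing the new claim to Lemma~\ref{lemmaC}. By Lemma~\ref{lemmaC} we already know that (1) $D$ is minimal if and only if (2) $w$ is minimal. Hence it suffices to prove that (2) $w$ is minimal if and only if (3) $A$ is minimal; equivalently, that $w$ is $1$-reducible if and only if $A$ is $1$-reducible, and $w$ is $2$-reducible if and only if $A$ is $2$-reducible.

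First I would recall the dictionary between a Gauss code $w = x_1 x_2 \dots x_{2n}$ and its arrow diagram $A$: the letter $x_i$ sits at the point $P_i$, and for each $j$ the two points carrying $(j,L)$ and $(j,R)$ are joined by an arrow oriented from the $(j,R)$-point to the $(j,L)$-point. Thus a point $P_i$ is the \emph{head} of its arrow exactly when $x_i$ has second component $L$, and the \emph{tail} exactly when $x_i$ has second component $R$; moreover two points $P_i$ and $P_{i'}$ are joined by an arrow exactly when $x_i$ and $x_{i'}$ have the same first component $j$. With this translation in hand, the definition of a $1$-reducible arrow diagram (an arrow joining consecutive points $P_i$ and $P_{i+1}$) says precisely that $x_i$ and $x_{i+1}$ have the same first component $j$ and opposite second components, i.e.\ $(x_i, x_{i+1}) = ((j,L),(j,R))$ or $((j,R),(j,L))$ — which is exactly the definition of $w$ being $1$-reducible. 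This gives the $1$-reducible equivalence directly.

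For the $2$-reducible equivalence I would similarly match cases. Condition (1) in the definition of a $2$-reducible arrow diagram says $P_i$ is joined to $P_{i'}$ and $P_{i+1}$ to $P_{i'+1}$, with the head/tail pattern of $(P_i, P_{i+1})$ opposite to that of $(P_{i'}, P_{i'+1})$. Translating: $x_i, x_{i'}$ share a first component $j$, $x_{i+1}, x_{i'+1}$ share a first component $k$ (and $j \ne k$ since the arrows are distinct), and the $L/R$ pattern of $(x_i, x_{i+1})$ is the reverse of that of $(x_{i'}, x_{i'+1})$. The possible patterns $(L,R)$ versus $(R,L)$ yield exactly case~(1) of the definition of a $2$-reducible Gauss code, namely $(x_i, x_{i+1}) = ((j,L),(k,R))$ and $(x_{i'}, x_{i'+1}) = ((j,R),(k,L))$ (after possibly swapping the roles of $i$ and $i'$, and of $j$ and $k$). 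Condition (2) of the arrow-diagram definition, where $P_i$ is joined to $P_{i'+1}$ and $P_{i+1}$ to $P_{i'}$ with the same opposite head/tail pattern, translates in the same way to case~(2) of the $2$-reducible Gauss code definition. Running this bookkeeping in both directions shows $w$ is $2$-reducible if and only if $A$ is $2$-reducible, and combining everything with Lemma~\ref{lemmaC} completes the proof.

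The main obstacle is purely combinatorial: making sure the case analysis for $2$-reducibility is exhaustive and that the symmetry between the index pair $(i,i')$ and the label pair $(j,k)$ is handled consistently, so that the three numbered cases in the definition of a $2$-reducible Gauss code (the $(L)(R)$--$(R)(L)$ case, and the two $(L)(R)$--$(L)(R)$ / $(R)(L)$--$(R)(L)$ cases) line up with the two geometric configurations of arrows. This is routine but needs care; no genuine difficulty arises beyond transcription, since the correspondence between letters at points and arrow endpoints is a bijection that visibly respects the notions of ``consecutive points'' and ``head/tail''.
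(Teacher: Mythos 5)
Your proposal is correct and follows essentially the same route as the paper, which simply declares the lemma ``a direct consequence from the definition'': you reduce $(1)\Leftrightarrow(2)$ to Lemma~\ref{lemmaC} and verify $(2)\Leftrightarrow(3)$ by the letter-by-letter dictionary between Gauss codes and arrow diagrams (heads $\leftrightarrow L$, tails $\leftrightarrow R$, arrows $\leftrightarrow$ matching first components). Your write-up merely makes explicit the routine case-matching that the paper leaves implicit, so there is nothing to correct.
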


\begin{proof}
It is a direct consequence from the definition. 
\end{proof}

Let ${\bf Arrow}_{\rm rot}^{\rm min}(n)$ (${\bf Arrow}_{\rm rot+ref}^{\rm min}(n)$)
denote the subset of 
${\bf Arrow}_{\rm rot}(n)$ (${\bf Arrow}_{\rm rot+ref}(n)$) consisting of 
the classes of minimal arrow diagrams with $n$ arrows. 

\begin{theorem}\label{propArrowDB}
There are bijections  
\begin{equation}
{\bf Doodle}_{\rm ori}(n)  \longleftrightarrow  
{\bf Diagram}^{\rm min}_{\rm strict}(n)  \longleftrightarrow  {\bf Gauss}^{\rm min}_{\rm ori}(n)  \longleftrightarrow  {\bf Arrow}^{\rm min}_{\rm rot}(n)
\end{equation}
and 
\begin{equation}
{\bf Doodle}_{\rm unori}(n)  \longleftrightarrow 
{\bf Diagram}^{\rm min}_{\rm strict+rev}(n)  \longleftrightarrow  {\bf Gauss}^{\rm min}_{\rm unori}(n)  \longleftrightarrow  {\bf Arrow}^{\rm min}_{\rm rot+ref}(n). 
\end{equation} 
\end{theorem}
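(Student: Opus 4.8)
The plan is to read off the statement as a concatenation of bijections that have already been established earlier in the paper, and then to check that each link survives the restriction to \emph{minimal} objects. Concretely, the first link ${\bf Doodle}_{\rm ori}(n)\longleftrightarrow{\bf Diagram}^{\rm min}_{\rm strict}(n)$ is exactly the bijection~(\ref{eqn:DDori}) that Theorem~\ref{thm0} provides (and in the unoriented case, ${\bf Doodle}_{\rm unori}(n)\longleftrightarrow{\bf Diagram}^{\rm min}_{\rm strict+rev}(n)$ is~(\ref{eqn:DDunori})). The second link ${\bf Diagram}^{\rm min}_{\rm strict}(n)\longleftrightarrow{\bf Gauss}^{\rm min}_{\rm ori}(n)$ (and its unoriented analogue) is Proposition~\ref{propDB}. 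So the only genuinely new content is the third link ${\bf Gauss}^{\rm min}_{\rm ori}(n)\longleftrightarrow{\bf Arrow}^{\rm min}_{\rm rot}(n)$, which I would obtain by restricting the bijection ${\bf Gauss}_{\rm ori}(n)\longleftrightarrow{\bf Arrow}_{\rm rot}(n)$ of Proposition~\ref{propDA}.

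For that restriction I would argue as follows. Take a virtual diagram $D$ with $n$ real crossings, a Gauss code $w$ of $D$, and the arrow diagram $A$ of $w$; these three objects represent the classes matched to one another under Proposition~\ref{propDA}. By Lemma~\ref{lemmaArrowC}, $D$ is minimal $\iff$ $w$ is minimal $\iff$ $A$ is minimal. Hence ``being minimal'' is a single, coherent property that is compatible with all three descriptions. To conclude that it descends to a property of the equivalence classes, I would invoke the remark following Lemma~\ref{lemmaC} (minimality of a Gauss code is invariant under oriented equivalence) on the Gauss-code side, and on the arrow side note that a rotation of $A$ corresponds, via Lemma~\ref{lemmaDA}(2), to a shift of $w$, which preserves minimality; thus ``minimal'' is well defined on ${\bf Gauss}_{\rm ori}(n)$ and on ${\bf Arrow}_{\rm rot}(n)$, and the bijection of Proposition~\ref{propDA} carries ${\bf Gauss}^{\rm min}_{\rm ori}(n)$ onto ${\bf Arrow}^{\rm min}_{\rm rot}(n)$. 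Composing the three links then yields the first displayed chain.

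The unoriented chain follows by the identical argument, replacing ``ori'' by ``unori'', ``strict'' by ``strict$+$rev'', and the rotation action by the full dihedral action (rotations and reflections): use~(\ref{eqn:DDunori}), the second bijection of Proposition~\ref{propDB}, and the second bijection of Proposition~\ref{propDA}, and invoke Lemma~\ref{lemmaDA}(3) together with Lemma~\ref{lemmaArrowC} to see that minimality is again preserved by reflections, so the restricted map is still a bijection onto ${\bf Arrow}^{\rm min}_{\rm rot+ref}(n)$.

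I do not expect any serious obstacle: the proof is essentially bookkeeping. The one point that needs a sentence of care — and the closest thing to a ``hard step'' — is verifying that minimality is a class invariant \emph{simultaneously} on the Gauss-code side and the arrow-diagram side so that the bijection of Proposition~\ref{propDA} genuinely restricts; this is handled by combining Lemma~\ref{lemmaArrowC}, Lemma~\ref{lemmaDA}, and the minimality remark after Lemma~\ref{lemmaC}.
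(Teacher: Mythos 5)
Your proposal is correct and follows essentially the same route as the paper, which simply combines Theorem~\ref{thm0}, Proposition~\ref{propDB} and Proposition~\ref{propDA}; the only difference is that you spell out explicitly the restriction step (that minimality is a class invariant, via Lemma~\ref{lemmaArrowC}, Lemma~\ref{lemmaDA} and the remark after Lemma~\ref{lemmaC}), which the paper leaves implicit by stating Lemma~\ref{lemmaArrowC} just before the theorem.
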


\begin{proof}
Combining Theorem~\ref{thm0}, Proposition~\ref{propDB} and Proposition~\ref{propDA}, we have the result.  
\end{proof}

Computation of virtual doodles using arrow diagrams is easy and practical when the number of real crossings is small. 
At the conference \lq\lq Self-distributive system and quandle (co)homology theory in algebra and low-dimensional topology\rq\rq  held in Busan, Korea in June 2017 as 2017 KIAS Research Station, after the talk on doodles given by the fourth author, Victoria Lebed made a table of minimal arrow diagrams with $4$ arrows by hand.  By Theorem~\ref{propArrowDB} it provides a table of virtual doodles with $4$ real crossings. 

\vspace{0.5cm} 

\noindent
{\bf Acknowledgments} 

\noindent
The third and the fourth authors were supported by JSPS KAKENHI Grant Numbers JP15K04879 and JP26287013.

\end{document}